\definecolor{ColBlack}{RGB}{0,0,0} 
\definecolor{ColWhite}{RGB}{255,255,255} 
\definecolor{ColAA}{HTML}{520db1} 
\definecolor{ColAB}{HTML}{1a34c0} 
\definecolor{ColAC}{HTML}{3851db} 
\definecolor{ColBA}{HTML}{a80b3a} 
\definecolor{ColBB}{HTML}{a80b27} 
\definecolor{ColBC}{HTML}{b10d0d} 
\numberwithin{equation}{subsection}
\def\l@section{\@tocline{1}{3pt}{1pc}{5pc}{}}
\def\l@subsection{\@tocline{2}{2pt}{2pc}{5pc}{}}
\newtheorem{Theorem}{Theorem}[subsection]
\newtheorem{Proposition}[Theorem]{Proposition}
\newtheorem{Lemma}[Theorem]{Lemma}
\renewcommand{\leq}{\leqslant}
\renewcommand{\geq}{\geqslant}
\newcommand{\ColAA}[1]{\textcolor{ColAA}{#1}}
\newcommand{\ColAB}[1]{\textcolor{ColAB}{#1}}
\newcommand{\Hide}[1]{\ColAA{\tt HIDEN}}
\newcommand{\Def}[1]{\ColAB{\em #1}}
\newcommand{\Par}[1]{\left(#1\right)}
\newcommand{\Bra}[1]{\left\{#1\right\}}
\newcommand{\OEIS}[1]{\href{http://oeis.org/#1}{{\bf #1}}}
\tikzstyle{Centering}=[{baseline={([yshift=-0.5ex]current bounding box.center)}}]
\tikzstyle{MarkAA}=[draw=ColAA!80,fill=ColAA!8]
\tikzstyle{MarkAB}=[draw=ColAB!80,fill=ColAB!8]
\tikzstyle{MarkAC}=[draw=ColAC!80,fill=ColAC!8]
\tikzstyle{MarkBA}=[draw=ColBA!80,fill=ColBA!8]
\tikzstyle{MarkBB}=[draw=ColBB!80,fill=ColBB!8]
\tikzstyle{MarkBC}=[draw=ColBC!80,fill=ColBC!8]
\tikzstyle{Node}=[circle,MarkAA,inner sep=1pt,minimum size=2mm,thick,font=\scriptsize]
\tikzstyle{BigNode}=[rectangle,MarkBA,inner sep=1pt,minimum size=4mm,thick,font=\scriptsize]
\tikzstyle{Edge}=[draw=ColBB!80,cap=round,thick,rounded corners=2.5pt]
\tikzstyle{Leaf}=[rectangle,MarkBC,inner sep=0pt,minimum size=1mm,thick]
\tikzstyle{NodeST}=[font=\scriptsize]
\tikzstyle{NodeGraph}=[circle,MarkAB,inner sep=1pt,minimum size=1.5mm,thick]
\tikzstyle{NodeLabeldGraph}=[font=\scriptsize,node distance=4mm]
\tikzstyle{NodeLabelGraph}=[font=\tiny,node distance=3mm]
\tikzstyle{BigNodeLabeldGraph}=[font=\small,node distance=7mm]
\tikzstyle{MarkedNodeGraph}=[NodeGraph,rectangle,draw=ColAB!90,fill=ColAB!50,
\tikzstyle{Marked2NodeGraph}=[NodeGraph,regular polygon,regular polygon sides=6,
\tikzstyle{EdgeGraph}=[ColBB!70,cap=round,thick]
\tikzstyle{MarkedEdgeGraph}=[EdgeGraph,ColAA!90,very thick]
\tikzstyle{Marked2EdgeGraph}=[EdgeGraph,ColBA!90,very thick]
\tikzstyle{EdgeLabel}=[midway,inner sep=1pt,fill=ColWhite!0,font=\tiny]
\tikzstyle{MapGraph}=[ColAA!100,draw,dashed,-{>[scale=1.0,length=6,width=5]}]
\tikzstyle{FaceXY}=[fill=ColAA,opacity=.1]
\tikzstyle{FaceXZ}=[fill=ColBA,opacity=.2]
\tikzstyle{FaceYZ}=[fill=ColBC,opacity=.2]
\tikzstyle{Face}=[FaceYZ]
\tikzstyle{PathNode}=[NodeGraph]
\tikzstyle{PathStep}=[EdgeGraph]
\tikzstyle{PathDiag}=[EdgeGraph,ColBA!30,dotted]
\tikzstyle{Map}=[ColBlack!100,draw,-{>[scale=1.5,length=4,width=5]}]
\tikzstyle{Injection}=[ColBlack!100,draw,{>[scale=1.5,length=4,width=5]}-{>[scale=1.5,
\tikzstyle{MapEmbedding}=[Injection]
\tikzstyle{MapIsomorphism}=[Map,double]
\tikzstyle{LineGrid}=[very thin,dashed,draw=ColAC!40]
\tikzstyle{Grid}=[LineGrid]
\newcommand{\Z}{\mathbb{Z}}
\newcommand{\R}{\mathbb{R}}
\newcommand{\SetIntervalPoset}{\mathsf{IP}}
\newcommand{\cc}{\mathsf{CC}(n)}
\newcommand{\ccc}{\mathsf{CC}}
\newcommand{\dit}{\mathsf{TID}(n)}
\newcommand{\SetIntervalBinaryTrees}{\mathsf{int(T_2(n))}}
\newcommand{\ccs}{\mathsf{SCC}(n)}
\newcommand{\DiffIndexes}{\mathrm{D}}
\newcommand{\TamariOrder}{\mathsf{ta}}
\newcommand{\TamariIntervalOrder}{\mathsf{int(ta)}}
\newcommand{\SetBinaryTrees}{\mathsf{T_2}}
\newcommand{\Interval}{\mathsf{int}}
\newcommand{\IntervalPoset}{\pi}
\newcommand{\Leq}{\preccurlyeq}
\newcommand{\LatticeL}{\mathcal{L}}
\newcommand{\PosetP}{\mathcal{P}}
\newcommand{\Angle}[1]{\left\langle#1\right\rangle}
\DeclareMathOperator{\Covered}{\lessdot}
\newcommand{\DiffIndices}{\mathrm{D}}
\newcommand{\TreeT}{\mathfrak{t}}
\newcommand{\TreeS}{\mathfrak{s}}
\newcommand{\TreeA}{\mathfrak{a}}
\newcommand{\TreeB}{\mathfrak{b}}
\newcommand{\TreeC}{\mathfrak{c}}
\newcommand{\ForestF}{\mathfrak{f}}
\newcommand{\OutputWings}{\mathcal{O}}
\newcommand{\InputWings}{\mathcal{I}}
\newcommand{\CubicReal}{\mathfrak{C}}
\newcommand{\Volume}{\mathrm{vol}}
\newcommand{\SynVolume}{\mathrm{sv}}
\newcommand{\Inp}{\mathrm{in}}
\newcommand{\Outp}{\mathrm{out}}
\title[Geometric realizations of Tamari interval lattices via cubic coordinates]
    {Geometric realizations of Tamari interval lattices\\ via cubic coordinates}
\keywords{Tamari lattices; Tamari intervals; 
interval-posets; posets; geometric realizations; cubical complexes.}
\date{\today}
\author{Camille Combe}
\address{\scriptsize Institut de Recherche Mathématique Avancée
UMR 7501, Université de Strasbourg et CNRS,
7 rue René Descartes
67000 Strasbourg, France.}
\email{cami.comb@gmail.com}
\begin{document}

\begin{abstract}
We introduce cubic coordinates, which are integer words encoding intervals in the Tamari lattices.
Cubic coordinates are in bijection with interval-posets, themselves known to be in bijection with Tamari intervals.
We show that in each degree the set of cubic coordinates forms a lattice, isomorphic to the lattice of Tamari intervals. Geometric realizations are naturally obtained by placing cubic coordinates in space, highlighting some of their properties. We consider the cellular structure of these realizations. Finally, we show that the poset of cubic coordinates is shellable.
\end{abstract}

\maketitle

\tableofcontents

\section*{Introduction}
The Tamari lattices are partial orders having extremely rich combinatorial and algebraic properties. These partial orders are defined on the set of binary trees and rely on the rotation operation~\cite{Tam62}.
We are interested in the intervals of these lattices, meaning the pairs of comparable binary trees. Tamari intervals of size $n$ also form a lattice. The number of these objects is given by a formula that was proved by Chapoton~\cite{Cha06}:
\begin{equation}
\frac{2(4n+1)!}{(n+1)!(3n+2)!}.
\end{equation}

Strongly linked with associahedra, Tamari lattices have been recently generalized in many ways~\cite{BPR12,PRV17}. In this process, the number of intervals of these generalized lattices have also been enumerated through beautiful formulas~\cite{BMFPR12,FPR17}.
Many bijections between Tamari intervals of size $n$ and other combinatorial objects are known. For instance, a bijection with $3$-connected planar triangulations is presented by Bernardi and Bonichon in~\cite{BB09} (see also \cite{Fan18}). It has been proved by Châtel and Pons that Tamari intervals are in bijection with interval-posets of the same size~\cite{CP15}.
\smallbreak

We provide in this paper a new bijection with Tamari intervals, which is inspired by interval-posets. 
More precisely, we first build two words of size $n$ from the Tamari diagrams~\cite{Pal86} of a binary tree. If they satisfy a certain property of compatibility, we build a Tamari interval diagram from these two words. 
We show that Tamari interval diagrams and interval-posets are in bijection.
Then we propose a new encoding of Tamari intervals, by building $(n-1)$-tuples of numbers from Tamari interval diagrams. We call these tuples cubic coordinates. 
This new encoding has two obvious virtues: it is very 
compact and it 
gives a way of comparing in a simple manner two Tamari intervals, through a fast algorithm. 
On the other hand, some properties of Tamari intervals translate nicely in the setting of cubic coordinates. For instance, synchronized Tamari intervals~\cite{FPR17} become cubic coordinates with no zero entry.
Besides, cubic coordinates provide naturally a geometric realization of the lattice of Tamari intervals, by seeing them as space coordinates. Indeed, all cubic coordinates of size $n$ can be placed in the space $\mathbb{R}^{n-1}$. By drawing their cover relations, we obtain a directed graph. This gives us a realization of cubic coordinate lattices, which we call cubic realization. This realization leads us to many questions, in particular about the cells it contains.
We characterize these cells in a combinatorial way, and we deduce a formula to compute the volume of the cubic realization in the geometrical sense.
Another direction, more topological, involves the shellability of partial order. We show, drawing inspiration from the work of Björner and Wachs~\cite{BW96, BW97}, that the cubic coordinates poset is EL-shellable, and as a consequence its associated complex is shellable.
\smallbreak

This article is organized in three sections.
\smallbreak

The first section is dedicated to reminders about some definitions, such as binary trees, Tamari intervals and interval-posets, and sets out the conventions used. Because of its key role in this work, the bijection between Tamari intervals and interval-posets is also recalled in this section. 
\smallbreak

In the second section, we define Tamari interval diagrams and show that they are in bijection, size by size, with interval-posets.
We then define cubic coordinates and show that they are in bijection, size by size, with Tamari interval diagrams.
Using this two bijections, and after having endowed the set of cubic coordinates with a partial order, we show that there is a poset isomorphism between the poset of cubic coordinates and the poset of Tamari intervals. 
\smallbreak

As pointed out above, the poset of cubic coordinates can then be realized geometrically. This cubic realization and the cells that compose it are the object of the third section.
For each cell, we then associate a synchronized cubic coordinate, which is a cubic coordinate without letter $0$.
By relying upon this particular cubic coordinate, we give a formula to compute the volume of the cubic realization. 
Finally, we extend the result of Björner and Wachs on the Tamari posets to the Tamari interval posets, by showing that the cubic coordinate posets are EL-shellable.
\smallbreak

This article is a complete version of~\cite{Com19}. All the proofs are given and several new results are presented, such as the EL-shellability of cubic coordinate posets.
\smallbreak

\subsubsection*{General notations and conventions}
Throughout this article, for all words $u$, we denote by $u_i$ the $i$-th letter of $u$. For any integers $i$ and $j$, $[i, j]$ denotes the set $\{i, i + 1, \dots, j\}$. For any integer $i$, $[i]$ denotes the set $[1, i]$.
All posets considered in this article are finite.
\smallbreak

\section{Preliminaries}\label{Sec:préliminaires}

In this first section we provide some basic notions of combinatorics and the conventions used afterwards. For this, we recall the definitions of lattices, binary trees, Tamari intervals, and interval-posets. Also, we recall the bijection given in~\cite{CP15}.

\subsection{Posets and lattices}
A \Def{partially ordered set}, commonly called \Def{poset}, is a pair $(\PosetP,\Leq_{\PosetP})$. When the context is clear, we simply denote this pair by~$\PosetP$.
\smallbreak

When two elements $x$ and $y$ of $\PosetP$ satisfy $x \Leq_{\PosetP} y$, then we say that $x$ and $y$ are \Def{comparable}. Otherwise, they are \Def{incomparable}. 
\smallbreak

Let $x , y\in \PosetP$ such that $x \Leq_{\PosetP} y$ and $x \neq y$. The element $y$ \Def{covers} $x$, denoted by $x \Covered_{\PosetP} y$, for the partial order $\Leq_{\PosetP}$ if, for all $z \in \PosetP$ such that $x \Leq_{\PosetP} z \Leq_{\PosetP} y$, either $z=x$ or $z=y$. The binary relation $\Covered_{\PosetP}$ is called the \Def{covering relation} of the poset $\PosetP$. By a slight abuse of notation, the set of elements $(x,y)$ such that $x \Covered_{\PosetP} y$ is also denoted by $\Covered_{\PosetP}$.
\smallbreak

A \Def{maximal element} of $\PosetP$ is an element $x$ such that if there is $y \in \PosetP$ such that $x \Leq_\PosetP y$, then $y = x$. Likewise, a \Def{minimal element} of $\PosetP$ is an element $y$ such that if there is $x \in \PosetP$ such that $x \Leq_\PosetP y$, then $x = y$.
A poset $\PosetP$ is \Def{bounded} if it has a unique maximal element and a unique minimal element for $\Leq_\PosetP$. 
\smallbreak

Since a partial order is transitive, one can realize posets or lattices by knowing only covering relations. The natural way to realize posets is to draw their \Def{Hasse diagrams}, by drawing a edge between all $x$ and $y$ in $\PosetP$ such that $(x,y) \in \Covered_{\PosetP}$. For any $(x,y) \in \Covered_{\PosetP}$, we choose the convention to represent $x$ at the top and $y$ at the bottom in the Hasse diagrams. We will keep this convention for all realizations.
\smallbreak

Let $x,y\in \PosetP$, the \Def{join} between $x$ and $y$, denoted by $\vee_{\PosetP} (x,y)$ (or $x \vee_\PosetP y$), is defined by
\begin{equation}
\vee_{\PosetP} (x,y) := \mathrm{min}_{\Leq_{\PosetP}}\{z\in \PosetP~:~x\Leq_{\PosetP}z \mbox{ and } y\Leq_{\PosetP}z \}.
\end{equation}
The \Def{meet} between $x$ and $y$, denoted by $\wedge_{\PosetP} (x,y)$ (or $x \wedge_\PosetP y$), is defined by
\begin{equation}
\wedge_{\PosetP} (x,y) := \mathrm{max}_{\Leq_{\PosetP}}\{z\in \PosetP~:~z\Leq_{\PosetP}x \mbox{ and } z\Leq_{\PosetP}y \}.
\end{equation}
A poset $\PosetP$ is a \Def{join-semilattice} if for all $x,y\in \PosetP$, $\vee_{\PosetP} (x,y)$ exists. Likewise, a poset $\PosetP$ is a \Def{meet-semilattice} if for all $x,y\in \PosetP$, $\wedge_{\PosetP} (x,y)$ exists.
A poset $(\LatticeL,\Leq_{\LatticeL})$ is a \Def{lattice} if $\LatticeL$ is a join-semilattice and a meet-semilattice.
\smallbreak

Let $\PosetP$ be a poset and $u^{(1)}, u^{(2)} \in \PosetP$ such that $u^{(1)} \Leq_{\PosetP} u^{(2)}$. 
An \Def{interval} $(u^{(1)}, u^{(2)})$ is the set of all elements between $u^{(1)}$ and $u^{(2)}$. 
The set of intervals of $\PosetP$ is denoted by $\Interval(\PosetP)$.
The \Def{poset of intervals} of a poset $\PosetP$ is the poset on the set $\Interval(\PosetP)$ endowed with the partial order $\Leq_{\Interval(\PosetP)}$ defined, for all $(u^{(1)},u^{(2)}),(v^{(1)}, v^{(2)})\in \Interval(\PosetP)$, by
\begin{equation}\label{posetintervalle}
(u^{(1)}, u^{(2)}) \Leq_{\Interval(\PosetP)} (v^{(1)},v^{(2)}) \mbox{ if } u^{(1)}\Leq_{\PosetP}v^{(1)} \mbox{ and } u^{(2)}\Leq_{\PosetP} v^{(2)}.
\end{equation}
In the same way, for $(u^{(1)},u^{(2)}), (v^{(1)},v^{(2)})\in \Interval(\LatticeL)$ such that $(u^{(1)},u^{(2)})\Leq_{\Interval(\LatticeL)}(v^{(1)},v^{(2)})$, a covering relation for the partial order $\Leq_{\Interval(\LatticeL)}$ is defined.
\smallbreak

The property of being a lattice is preserved under this construction.

\begin{Proposition}\label{treillis}
If $(\LatticeL,\Leq_{\LatticeL})$ is a lattice, then $(\Interval(\LatticeL),\Leq_{\Interval(\LatticeL)})$ is a lattice.
\end{Proposition}

\begin{proof}
Let $(u^{(1)},u^{(2)}), (v^{(1)},v^{(2)})\in \Interval(\LatticeL)$. 
First, we have to show that $\vee_{\LatticeL}(u^{(1)},v^{(1)})\Leq_{\LatticeL} \vee_{\LatticeL}(u^{(2)},v^{(2)})$.
By the definition of the join, one has $u^{(2)} \Leq_{\LatticeL} \vee_{\LatticeL}(u^{(2)},v^{(2)})$ and $v^{(2)}\Leq_{\LatticeL} \vee_{\LatticeL}(u^{(2)},v^{(2)})$. Furthermore, since $u^{(1)}\Leq_{\LatticeL} u^{(2)}$ and $v^{(1)} \Leq_{\LatticeL} v^{(2)}$, one has $u^{(1)}\Leq_{\LatticeL} \vee_{\LatticeL}(u^{(2)},v^{(2)})$ and $v^{(1)} \Leq_{\LatticeL} \vee_{\LatticeL}(u^{(2)},v^{(2)})$. In addition, $\vee_{\LatticeL}(u^{(1)},v^{(1)})$ is the minimal element of $\LatticeL$ satisfying $u^{(1)}\Leq_{\LatticeL} \vee_{\LatticeL}(u^{(1)},v^{(1)})$ and $v^{(1)}\Leq_{\LatticeL} \vee_{\LatticeL}(u^{(1)},v^{(1)})$. Thus, $\vee_{\LatticeL}(u^{(1)},v^{(1)})\Leq_{\LatticeL} \vee_{\LatticeL}(u^{(2)},v^{(2)})$.
\smallbreak

From~\eqref{posetintervalle}, one has
\begin{equation}
\begin{split}
\begin{array}{l}
\vee_{\Interval(\LatticeL)} \left((u^{(1)},u^{(2)}),(v^{(1)},v^{(2)})\right) \\
= \mathrm{min}_{\Leq_{\Interval(\LatticeL)}} \{(w^{(1)},w^{(2)})\in\Interval(\LatticeL)~:~(u^{(1)},u^{(2)})\Leq_{\Interval(\LatticeL)}(w^{(1)},w^{(2)}), (v^{(1)},v^{(2)})\Leq_{\Interval(\LatticeL)}(w^{(1)},w^{(2)})\} \\
= \mathrm{min}_{\Leq_{\Interval(\LatticeL)}} \{(w^{(1)},w^{(2)})\in\Interval(\LatticeL)~:~u^{(1)} \Leq_{\LatticeL} w^{(1)}, u^{(2)} \Leq_{\LatticeL} w^{(2)}, v^{(1)}\Leq_{\LatticeL} w^{(1)}, v^{(2)}\Leq_{\LatticeL} w^{(2)}\} \\
= \left(\vee_{\LatticeL}(u^{(1)},v^{(1)}),\vee_{\LatticeL}(u^{(2)},v^{(2)})\right).
\end{array}
\end{split}
\end{equation}

The case of the meet $\wedge_{\Interval(\LatticeL)} \left((u^{(1)},u^{(2)}),(v^{(1)},v^{(2)})\right) = \left(\wedge_{\LatticeL}(u^{(1)},v^{(1)}),\wedge_{\LatticeL}(u^{(2)},v^{(2)})\right)$ is symmetrical.
\end{proof}

\subsection{Rooted trees and binary trees}\label{Subsec:binarytrees}
A \Def{rooted tree}, or simply a \Def{tree} in our context, is defined 
recursively as a node together with
a (possibly empty) sequence of rooted trees.
We shall use the standard terminology about trees like \Def{root},
\Def{edge}, \Def{child}, \Def{descendant}, \Def{subtree}, {\em etc.}
The \Def{size} of a tree is its number of nodes.
The nodes of the trees considered in this work are labeled by positive integers.
We draw trees with the root at the top, where a node is depicted by $
\begin{tikzpicture}[scale=.2,Centering]
        \node[Node](1)at(0,0){};
\end{tikzpicture}$
with its label inside the circle.
A \Def{forest} is a sequence of trees. From a forest $\ForestF$ of $n$ trees, it is always possible to build a tree $\TreeT$ by taking the root of each element of $\ForestF$ and by linking all these roots to an artificial node, such that this artificial node become the root of $\TreeT$. The size of the obtained tree is one plus the sum of all sizes of trees in $\ForestF$.
\smallbreak

A \Def{binary tree} (or \Def{$2$-tree}) $\TreeT$ is either a leaf or a node attached through two edges to two binary trees, 
which are called respectively the \Def{left subtree} and the \Def{right subtree} of $\TreeT$. Recall that the \Def{size} of a binary tree is its number of nodes.
We denote by $\SetBinaryTrees(n)$ the set of binary trees of size $n$. The set of binary trees is enumerated by Catalan numbers.
We draw binary trees with the root at the top and the leaves at the bottom, where a node is depicted by $
\begin{tikzpicture}[scale=.2,Centering]
        \node[Node](1)at(0,0){};
\end{tikzpicture}$ and a leaf is depicted by $
\begin{tikzpicture}[scale=.3,Centering]
        \node[Leaf](0)at(0,0){};
\end{tikzpicture}$ (see for instance Figure~\ref{arbrebinaire}).
\smallbreak

Let $\TreeT\in\SetBinaryTrees(n)$. Each node of $\TreeT$ is numbered recursively, starting with the left subtree, then the root, and ending with the right subtree. An example is given in Figure~\ref{arbrebinaire}. This numbering then establishes a total order on the nodes of a binary tree called the \Def{infix order}. Afterwards, this numbering is used to refer to the nodes. The sequence of nodes numbered from $1$ to $n$ forms the \Def{infix traversal}.
\smallbreak

When the size $n$ of $\TreeT$ satisfies $n \geq 1$, the \Def{canopy} of $\TreeT$ is the word of size $n-1$ on the alphabet $\{0, 1\}$ built by assigning to each leaf of $\TreeT$ a letter as follows.
Any leaf oriented to the left (resp.\ right) is labeled by $0$ (resp.\ $1$).
The canopy of $\TreeT$ is the word obtained by reading from left to right the labels thus established, forgetting the first and the last one (since there are always respectively $0$ and $1$).
For instance, the binary tree in Figure~\ref{arbrebinaire} has for canopy the word $0110100$. 
There is a link between infix order of a binary tree and its canopy. For a node of index $i$ for the infix order in a tree $\TreeT$, the right subtree of $i$ is a leaf oriented to the right if and only if the $i$-th letter of the canopy of $\TreeT$ is $1$. The left subtree of $i$ is a leaf oriented to the left if and only if the $(i-1)$-th letter of the canopy of $\TreeT$ is $0$. The two direct implications can be proved by induction on the set of binary trees, for instance, see Lemma 4.3. of \cite{Gir12}. The converses are simply given by the definition of the canopy.
\smallbreak

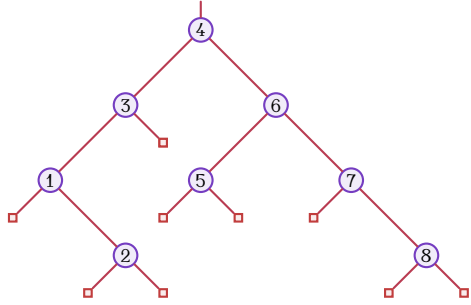
\begin{figure}[ht]
\centering
    \scalebox{1}{
    \begin{tikzpicture}[scale=.4,Centering]
        \node[Node](1)at(2,8){$1$};
        \node[Node](2)at(4,6){$2$};
        \node[Node](3)at(4,10){$3$};
        \node[Node](4)at(6,12){$4$};
        \node[Node](5)at(6,8){$5$};
        \node[Node](6)at(8,10){$6$};
        \node[Node](7)at(10,8){$7$};
        \node[Node](8)at(12,6){$8$};
        \node[Leaf](55)at(5,9){};
        \node[Leaf](11)at(1,7){};
        \node[Leaf](33)at(5,9){};
        \node[Leaf](22)at(3,5){};
        \node[Leaf](222)at(5,5){};
        \node[Leaf](55)at(5,7){};
        \node[Leaf](555)at(7,7){};
        \node[Leaf](77)at(9,7){};
        \node[Leaf](88)at(11,5){};
        \node[Leaf](888)at(13,5){};
        \draw[PathStep](4)--(3);
        \draw[PathStep](6)--(4);
        \draw[PathStep](3)--(1);
        \draw[PathStep](1)--(2);
        \draw[PathStep](5)--(55);
        \draw[PathStep](5)--(555);
        \draw[PathStep](1)--(11);
        \draw[PathStep](3)--(33);
        \draw[PathStep](2)--(22);
        \draw[PathStep](2)--(222);
        \draw[PathStep](6)--(5);
        \draw[PathStep](6)--(7);
        \draw[PathStep](7)--(8);
        \draw[PathStep](7)--(77);
        \draw[PathStep](8)--(88);
        \draw[PathStep](8)--(888);
        \node(r)at(6,14){};
        \draw[Edge](r)--(4);
    \end{tikzpicture}}
\caption{A binary tree of size $8$ and the numbering of its nodes in the infix order.}
\label{arbrebinaire}
\end{figure}

A fundamental operation in binary trees is the \Def{right rotation}\cite{Tam62}. Let $k$ and $l$ be the indices for the infix order of two nodes of a binary tree $\TreeT$, such that the node $k$ is the left child of the node $l$. Right rotation locally changes the tree $\TreeT$ so that $l$ becomes the right child of $k$ (see Figure~\ref{OpérationRot}). Equivalently, this means that the local configuration  $((\TreeA,\TreeB),\TreeC)$ becomes $(\TreeA,(\TreeB,\TreeC))$, where $\TreeA, \TreeB$ and $\TreeC$ are the subtrees shown in Figure~\ref{OpérationRot}.
\smallbreak

\begin{figure}[h!]
\scalebox{1}{
    \begin{tikzpicture}[scale=.4,Centering]
        \node[Node](1)at(5,10){$l$};
        \node[BigNode](2)at(1,6){$\TreeA$};
        \node[BigNode](3)at(5,6){$\TreeB$};
        \node[Node](4)at(3,8){$k$};
        \node[BigNode](5)at(7,8){$\TreeC$};
        \node[Node](6)at(13,10){$k$};
        \node[BigNode](7)at(11,8){$\TreeA$};
        \node[Node](8)at(15,8){$l$};
        \node[BigNode](9)at(17,6){$\TreeC$};
        \node[BigNode](10)at(13,6){$\TreeB$};

        \draw[PathStep](4)--(1);
        \draw[PathStep](2)--(4);
        \draw[PathStep](3)--(4);
        \draw[PathStep](5)--(1);
        \draw[PathStep](6)--(7);
        \draw[PathStep](6)--(8);
        \draw[PathStep](8)--(9);
        \draw[PathStep](8)--(10);
        \node(r)at(5,11.5){};
        \draw[Edge, ColAB!90,dotted](r)--(1);
        \node(rr)at(13,11.5){};
        \draw[Edge, ColAB!90,dotted](rr)--(6);
        \draw[Edge, ColAB!90](8,9)edge[->]node[above]{r. rot.}(10,9);
    \end{tikzpicture}}
\caption{Right rotation of edge $(k,l)$ in $\TreeT$ (on the left), where $\TreeA$, $\TreeB$, and $\TreeC$ are any subtrees.}
\label{OpérationRot}
\end{figure}

\subsection{Tamari intervals and interval-posets}\label{sss:TamariIntervals-IntervalPosets}
For any $n \geq 0$, let $\TreeS,\TreeT\in\SetBinaryTrees(n)$. We set $\TreeS\Leq_{\TamariOrder} \TreeT$ if either $\TreeT = \TreeS$ or $\TreeT$ is obtained by successively applying one or more right rotations in $\TreeS$.
The set $\SetBinaryTrees(n)$ endows with $\Leq_{\TamariOrder}$ is the \Def{Tamari lattice} of order $n$~\cite{HT72}. 
Moreover, $\TreeS$ is covered by $\TreeT$, denoted by $\TreeS \Covered_\TamariOrder \TreeT$, if $\TreeT$ is obtained from $\TreeS$ by performing one right rotation. 
\smallbreak

In the literature, the Tamari lattice is closely related to the \Def{associahedron}, or the \Def{Stasheff polytope} after the work of Stasheff. More precisely, the Hasse diagram of the Tamari lattice is the $1$-skeleton of the associahedron.
\smallbreak

Let $\TreeS,\TreeT\in\SetBinaryTrees(n)$. A \Def{Tamari interval} of \Def{size} $n$ is an interval $(\TreeS,\TreeT)$ for the Tamari order $\Leq_{\TamariOrder}$. The set of Tamari intervals of size $n$ is denoted by $\SetIntervalBinaryTrees$. 
\smallbreak

The Tamari interval lattice is the set $\SetIntervalBinaryTrees$ endowed with the partial order $\Leq_\TamariIntervalOrder$. Let $n \geq 0$ and $(\TreeS,\TreeT), (\TreeS',\TreeT')\in\SetIntervalBinaryTrees$, following~\eqref{posetintervalle}, we have that $(\TreeS,\TreeT) \Leq_\TamariIntervalOrder (\TreeS',\TreeT')$ if $\TreeS \Leq_{\TamariOrder} \TreeS'$ and $\TreeT\Leq_{\TamariOrder} \TreeT'$. According to Proposition~\ref{treillis}, the poset so defined is a lattice. Moreover, it follows from the definition of $\Leq_\TamariIntervalOrder$ that $(\TreeS',\TreeT')$ covers $(\TreeS,\TreeT)$ if
\begin{itemize}
\item either $\TreeS'$ is obtained by a single right rotation of an edge in $\TreeS$ and $\TreeT'=\TreeT$,
\item or $\TreeT'$ is obtained by a single right rotation of an edge in $\TreeT$ and $\TreeS'=\TreeS$.
\end{itemize}
\smallbreak

It is known from~\cite{Cha06} that Tamari intervals of size $n$ are enumerated by 
\begin{equation}
\frac{2(4n+1)!}{(n+1)!(3n+2)!}.
\end{equation}
The first numbers are
\begin{equation}
1, 1, 3, 13, 68, 399, 2530, 16965.
\end{equation}
This sequence is Sequence~\OEIS{A000260} of \cite{Slo}.
\smallbreak

Interval-posets are posets introduced by Châtel and Pons in~\cite{CP15} in order to study the Tamari lattice.
Indeed, there is a poset isomorphism between the Tamari interval lattices and the set of interval-posets endowed with a certain partial order. 
\smallbreak

Let $n \geq 0$ and $\{\IntervalPoset_{1},\dots, \IntervalPoset_{n}\}$ be a set of $n$ symbols numbered from $1$ to $n$.
An \Def{interval-poset} $\IntervalPoset$ is a partial order $\lhd$ on the set $\{\IntervalPoset_{1},\dots, \IntervalPoset_{n}\}$ such that 
\begin{enumerate}[label=(\roman*)]
\item\label{pip1} if $i < k$ and $\IntervalPoset_k\lhd \IntervalPoset_i$, then for all $\IntervalPoset_j$ such that $i<j<k$, one has $\IntervalPoset_j\lhd \IntervalPoset_i$,
\item\label{pip2} if $i < k$ and $\IntervalPoset_i\lhd \IntervalPoset_k$, then for all $\IntervalPoset_j$ such that $i<j<k$, one has $\IntervalPoset_j\lhd \IntervalPoset_k$.
\end{enumerate} 
The \Def{size} of an interval-poset is the cardinality of its underlying set.
The set of interval-posets of size $n$ is denoted by $\SetIntervalPoset(n)$, and the elements of interval-poset are called \Def{vertices}. 
\smallbreak

The two conditions~\ref{pip1} and~\ref{pip2} of interval-posets are referred to as \Def{interval-poset properties}. For any $i<j$, the relations $\IntervalPoset_j\lhd \IntervalPoset_i$ are known as \Def{decreasing relations} and the relations $\IntervalPoset_i\lhd \IntervalPoset_j$ are known as \Def{increasing relations}. 
\smallbreak

As it is shown in Figure~\ref{RIP2}, the Hasse diagram of interval-posets can be drawn as directed graph where two vertices $\IntervalPoset_i$ and $\IntervalPoset_j$ are related by an arrow from $\IntervalPoset_i$ to $\IntervalPoset_j$ (resp.\ $\IntervalPoset_j$ to $\IntervalPoset_i$) if $\IntervalPoset_i\lhd \IntervalPoset_j$ (resp.\ $\IntervalPoset_j\lhd \IntervalPoset_i$) where $i<j$. 
\smallbreak
 
Let $n \geq 0$ and $(\TreeS,\TreeT)\in\SetIntervalBinaryTrees$ and $\IntervalPoset\in\SetIntervalPoset(n)$.
We will recall a bijection $\rho$ relating on the one hand the restriction of $\IntervalPoset$ to its decreasing relations with the binary tree $\TreeS$, and on the other hand the restriction of $\IntervalPoset$ to its increasing relations with the binary tree $\TreeT$.
\smallbreak

Thus, from the restriction of $\IntervalPoset$ to its decreasing (resp.\ increasing) relations we build a forest referred to as the \Def{decreasing} (resp.\ \Def{increasing}) \Def{forest}, such that if $\IntervalPoset_j \lhd \IntervalPoset_i$ with $i<j$ (resp.\ $j<i$), then the node $j$ is a descendant of the node $i$. Otherwise, if $\IntervalPoset_j \ntriangleleft \IntervalPoset_i$ with $i < j$ (resp.\ $j < i$) the node $j$ is placed to the right (resp.\ left) of the node $i$. 
\smallbreak

Note that we obtain a decreasing (resp.\ increasing) forest formed by trees labelled from the roots to the leaves in increasing (resp.\ decreasing) order. Moreover, the prefix (resp.\ suffix) traversal of the decreasing (resp.\ increasing) forest gives the sequence of labels $1, \dots, n$.
Let us add a virtual root node (without label) on the top of both decreasing and increasing forests to form two trees. We denote by $\TreeS'$ and $\TreeT'$ the trees respectively obtained from the decreasing and the increasing forests. 
\smallbreak

Let $\rho$ be the map sending $\IntervalPoset$ to the pair $(\TreeS, \TreeT)$ of binary trees defined such that the tree $\TreeS$ (resp.\ $\TreeT$) is the unique binary tree obtained by reading $\TreeS'$ (resp.\ $\TreeT'$) in the following way. For all label $i, j$ in $\TreeS'$ (resp.\ $\TreeT'$), if a node $j$ is a descendant of a node $i$ in $\TreeS'$ (resp.\ $\TreeT'$), then $j$ becomes a right (resp.\ left) descendant of the node $i$ in $\TreeS$ (resp.\ $\TreeT$). If a node $i$ is a left (resp.\ right) brother of a node $j$ in $\TreeS'$ (resp.\ $\TreeT'$), then $i$ becomes a left (resp.\ right) descendant of the node $j$ in $\TreeS$ (resp.\ $\TreeT$).
\smallbreak

Figure~\ref{ITex} gives an example of construction by the bijection $\rho$ of a Tamari interval from an interval-poset of size $5$.
\smallbreak

\begin{figure}[h!]
\subfloat[][Interval-poset of size $5$.]{
\begin{minipage}[c]{.5\textwidth}
\centering
\scalebox{.8}
{\begin{tikzpicture}
\draw (-10.2,2.7) node[anchor=north west] {$\IntervalPoset_1$};
\draw (-9.2,2.7) node[anchor=north west] {$\IntervalPoset_2$};
\draw (-8.2,2.7) node[anchor=north west] {$\IntervalPoset_3$};
\draw (-7.2,2.7) node[anchor=north west] {$\IntervalPoset_4$};
\draw (-6.2,2.7) node[anchor=north west] {$\IntervalPoset_5$};
\draw [shift={(-8,3)},line width=1pt,color=ColBB!80]  plot[domain=0:3.14,variable=\t]({1*1*cos(\t r)+0*1*sin(\t r)},{0*1*cos(\t r)+1*1*sin(\t r)});
\draw [shift={(-7.5,3)},line width=1pt,color=ColBB!80]  plot[domain=0.04:3.2,variable=\t]({1*0.5*cos(\t r)+0*0.5*sin(\t r)},{0*0.5*cos(\t r)+1*0.5*sin(\t r)});
\draw [shift={(-6.5,2)},line width=1pt,color=ColBB!80]  plot[domain=-3.2:0.04,variable=\t]({1*0.5*cos(\t r)+0*0.5*sin(\t r)},{0*0.5*cos(\t r)+1*0.5*sin(\t r)});
\draw [shift={(-9.46,2)},line width=1pt,color=ColBB!80]  plot[domain=-3.2:0.04,variable=\t]({1*0.5*cos(\t r)+0*0.5*sin(\t r)},{0*0.5*cos(\t r)+1*0.5*sin(\t r)});
\draw [shift={(-9,2)},line width=1pt,color=ColBB!80]  plot[domain=-3.14:0,variable=\t]({1*1*cos(\t r)+0*1*sin(\t r)},{0*1*cos(\t r)+1*1*sin(\t r)});
\begin{scriptsize}
\draw [fill=ColBB!20,shift={(-7,3)},rotate=180] (0,0) ++(0 pt,3pt) -- ++(2.6pt,-4.5pt)--++(-5.2pt,0 pt) -- ++(2.6pt,4.5pt);\draw [fill=ColBB!20,shift={(-7,2)}] (0,0) ++(0 pt,3pt) -- ++(2.6pt,-4.5pt)--++(-5.2pt,0 pt) -- ++(2.6pt,4.5pt);
\draw [fill=ColBB!20,shift={(-10,2)}] (0,0) ++(0 pt,3pt) -- ++(2.6pt,-4.5pt)--++(-5.2pt,0 pt) -- ++(2.6pt,4.5pt);
\end{scriptsize}
\end{tikzpicture}}
\end{minipage}}
\subfloat[][Decreasing (on the left) and increasing (on the right) forests]{
\begin{minipage}[c]{.5\textwidth}
\centering
\scalebox{1}
{\begin{tikzpicture}[scale=.5,Centering]
        \node[Node](1)at(3,10){$1$};
        \node[Node](2)at(2,9){$2$};
        \node[Node](3)at(4,9){$3$};
        \node[Node](4)at(6,10){$4$};
        \node[Node](5)at(6,9){$5$};
        \node[Node](6)at(8,10){$1$};
        \node[Node](7)at(9,9){$2$};
        \node[Node](8)at(11,9){$3$};
        \node[Node](9)at(10,10){$4$};
        \node[Node](10)at(12,10){$5$};
        \draw[PathStep](1)--(3);
        \draw[PathStep](1)--(2);
        \draw[PathStep](4)--(5);
        \draw[PathStep](9)--(7);
        \draw[PathStep](9)--(8);
        \node[inner sep=0 mm](r)at(4.5,11.5){};
        \draw[Edge,ColAB!90,dotted](r)--(4);
        \draw[Edge,ColAB!90,dotted](r)--(1);
        \node[inner sep=0 mm](rr)at(10,11.5){};
        \draw[Edge,ColAB!90,dotted](rr)--(9);
        \draw[Edge,ColAB!90,dotted](rr)--(10);
        \draw[Edge,ColAB!90,dotted](rr)--(6);
    \end{tikzpicture}}
\end{minipage}}
\vspace{0.5cm}

\subfloat[][Left binary tree.]{
    \begin{minipage}{.45\textwidth}
    \centering
    \scalebox{1}{
    \begin{tikzpicture}[scale=.5,Centering]
        \node[Node](1)at(3,8){$1$};
        \node[Node](2)at(3,4){$2$};
        \node[Node](3)at(5,6){$3$};
        \node[Node](4)at(5,10){$4$};
        \node[Node](5)at(7,8){$5$};
        \node[Leaf](55)at(6,7){};
        \node[Leaf](555)at(8,7){};
        \node[Leaf](11)at(2,7){};
        \node[Leaf](33)at(6,5){};
        \node[Leaf](22)at(2,3){};
        \node[Leaf](222)at(4,3){};
        \draw[PathStep](4)--(1);
        \draw[PathStep](1)--(3);
        \draw[PathStep](3)--(2);
        \draw[PathStep](4)--(5);
        \draw[PathStep](5)--(55);
        \draw[PathStep](5)--(555);
        \draw[PathStep](1)--(11);
        \draw[PathStep](3)--(33);
        \draw[PathStep](2)--(22);
        \draw[PathStep](2)--(222);
        \node(r)at(5,11){};
        \draw[Edge](r)--(4);
    \end{tikzpicture}}
\end{minipage}
    \label{subfig:BT left}}
    \qquad
    \subfloat[][Right binary tree.]{
    \begin{minipage}{.45\textwidth}
    \centering
    \scalebox{1}{
    \begin{tikzpicture}[scale=.5,Centering]
        \node[Node](1)at(5,10){$1$};
        \node[Node](2)at(5,6){$2$};
        \node[Node](3)at(7,4){$3$};
        \node[Node](4)at(7,8){$4$};
        \node[Node](5)at(9,6){$5$};
        \node[Leaf](55)at(10,5){};
        \node[Leaf](555)at(8,5){};
        \node[Leaf](11)at(4,9){};
        \node[Leaf](33)at(6,3){};
        \node[Leaf](22)at(4,5){};
        \node[Leaf](333)at(8,3){};
        \draw[PathStep](4)--(1);
        \draw[PathStep](2)--(4);
        \draw[PathStep](3)--(2);
        \draw[PathStep](4)--(5);
        \draw[PathStep](5)--(55);
        \draw[PathStep](5)--(555);
        \draw[PathStep](1)--(11);
        \draw[PathStep](3)--(33);
        \draw[PathStep](2)--(22);
        \draw[PathStep](3)--(333);
        \node(r)at(5,11){};
        \draw[Edge](r)--(1);
    \end{tikzpicture}}
\end{minipage}}
    \label{subfig:BT right}
\caption{Construction of a Tamari interval from an interval-poset by $\rho$.}
\label{ITex}
\end{figure}
\smallbreak

In this section, we shall draw interval-posets as follows.
For any $i<j$, if $\IntervalPoset_j \lhd \IntervalPoset_i$ and there is no vertex $\IntervalPoset_k$ such that $\IntervalPoset_k\lhd \IntervalPoset_i$ and $j<k$, then we draw an arrow with source $\IntervalPoset_j$ and target $\IntervalPoset_i$ from below as shown in the example in Figure~\ref{IP8ex}. Symmetrically, if $\IntervalPoset_j\lhd \IntervalPoset_k$ and $j<k$ and if there is no $\IntervalPoset_i$ such that $\IntervalPoset_i\lhd \IntervalPoset_k$ and $i<j$, then we draw an arrow with source $\IntervalPoset_j$ and target $\IntervalPoset_k$ from above. We refer to this directed graph with two types of arrows as the \Def{minimalist representation} of $\IntervalPoset$.
\smallbreak

The closure for the interval-poset properties is given by adding the decreasing relations $\IntervalPoset_j\lhd \IntervalPoset_i$ for any relation $\IntervalPoset_k\lhd \IntervalPoset_i$ and by adding the increasing relations $\IntervalPoset_j\lhd \IntervalPoset_k$ for any relation $\IntervalPoset_i\lhd \IntervalPoset_k$, for any $i<j<k$.
By taking the reflexive closure and the closure for the interval-poset properties, an interval-poset is obtained from the minimalist representation. The interest of the minimalist representation is justified later, in particular with Theorem~\ref{bijDIT}. It is important to represent the decreasing relations and the increasing relations independently.
\smallbreak

\begin{figure}[h!]
\subfloat[][Minimalist representation.]{
\begin{minipage}[c]{.5\textwidth}
\centering
\scalebox{.8}
{\begin{tikzpicture}
\draw (-8.27,5.8) node[anchor=north west] {$\IntervalPoset_1$};
\draw (-7.27,5.8) node[anchor=north west] {$\IntervalPoset_2$};
\draw (-6.27,5.8) node[anchor=north west] {$\IntervalPoset_3$};
\draw (-5.27,5.8) node[anchor=north west] {$\IntervalPoset_4$};
\draw (-4.27,5.8) node[anchor=north west] {$\IntervalPoset_5$};
\draw (-3.27,5.8) node[anchor=north west] {$\IntervalPoset_6$};
\draw (-2.27,5.8) node[anchor=north west] {$\IntervalPoset_7$};
\draw (-1.27,5.8) node[anchor=north west] {$\IntervalPoset_8$};
\draw [shift={(-6,5.36027397260274)},line width=1pt,color=ColBB!80]  plot[domain=0.3095786777173675:2.832013975872426,variable=\t]({1*2.0998212757588393*cos(\t r)+0*2.0998212757588393*sin(\t r)},{0*2.0998212757588393*cos(\t r)+1*2.0998212757588393*sin(\t r)});
\draw [shift={(-6.5,5.914285714285715)},line width=1pt,color=ColBB!80]  plot[domain=0.16977827396833786:2.9718143796214553,variable=\t]({1*0.5072937401304201*cos(\t r)+0*0.5072937401304201*sin(\t r)},{0*0.5072937401304201*cos(\t r)+1*0.5072937401304201*sin(\t r)});
\draw [shift={(-1.5,5.958695652173913)},line width=1pt,color=ColBB!80]  plot[domain=0.08242154867336304:3.0591711049164303,variable=\t]({1*0.5017031484347476*cos(\t r)+0*0.5017031484347476*sin(\t r)},{0*0.5017031484347476*cos(\t r)+1*0.5017031484347476*sin(\t r)});
\draw [shift={(-7,5)},line width=1pt,color=ColBB!80]  plot[domain=-3.141592653589793:0,variable=\t]({1*1*cos(\t r)+0*1*sin(\t r)},{0*1*cos(\t r)+1*1*sin(\t r)});
\draw [shift={(-3,5)},line width=1pt,color=ColBB!80]  plot[domain=-3.141592653589793:0,variable=\t]({1*1*cos(\t r)+0*1*sin(\t r)},{0*1*cos(\t r)+1*1*sin(\t r)});
\begin{scriptsize}
\draw [fill=ColBB!20,shift={(-4,6)},rotate=180] (0,0) ++(0 pt,3pt) -- ++(2.598076211353316pt,-4.5pt)--++(-5.196152422706632pt,0 pt) -- ++(2.598076211353316pt,4.5pt);
\draw [fill=ColBB!20,shift={(-6,6)},rotate=180] (0,0) ++(0 pt,3pt) -- ++(2.598076211353316pt,-4.5pt)--++(-5.196152422706632pt,0 pt) -- ++(2.598076211353316pt,4.5pt);
\draw [fill=ColBB!20,shift={(-1,6)},rotate=180] (0,0) ++(0 pt,3pt) -- ++(2.598076211353316pt,-4.5pt)--++(-5.196152422706632pt,0 pt) -- ++(2.598076211353316pt,4.5pt);
\draw [fill=ColBB!20,shift={(-8,5)}] (0,0) ++(0 pt,3pt) -- ++(2.598076211353316pt,-4.5pt)--++(-5.196152422706632pt,0 pt) -- ++(2.598076211353316pt,4.5pt);
\draw [fill=ColBB!20,shift={(-4,5)}] (0,0) ++(0 pt,3pt) -- ++(2.598076211353316pt,-4.5pt)--++(-5.196152422706632pt,0 pt) -- ++(2.598076211353316pt,4.5pt);
\end{scriptsize}\end{tikzpicture}}
\end{minipage}
\label{RIP1}}
\subfloat[][Hasse diagram.]{
\begin{minipage}[c]{.5\textwidth}
\centering
\scalebox{.8}
{\begin{tikzpicture}
\draw (-8.27,5.8) node[anchor=north west] {$\IntervalPoset_1$};
\draw (-7.27,5.8) node[anchor=north west] {$\IntervalPoset_2$};
\draw (-6.27,5.8) node[anchor=north west] {$\IntervalPoset_3$};
\draw (-5.27,5.8) node[anchor=north west] {$\IntervalPoset_4$};
\draw (-4.27,5.8) node[anchor=north west] {$\IntervalPoset_5$};
\draw (-3.27,5.8) node[anchor=north west] {$\IntervalPoset_6$};
\draw (-2.27,5.8) node[anchor=north west] {$\IntervalPoset_7$};
\draw (-1.27,5.8) node[anchor=north west] {$\IntervalPoset_8$};
\draw [shift={(-6,5.36027397260274)},line width=1pt,color=ColBB!80]  plot[domain=0.3095786777173675:2.832013975872426,variable=\t]({1*2.0998212757588393*cos(\t r)+0*2.0998212757588393*sin(\t r)},{0*2.0998212757588393*cos(\t r)+1*2.0998212757588393*sin(\t r)});
\draw [shift={(-6.5,5.914285714285715)},line width=1pt,color=ColBB!80]  plot[domain=0.16977827396833786:2.9718143796214553,variable=\t]({1*0.5072937401304201*cos(\t r)+0*0.5072937401304201*sin(\t r)},{0*0.5072937401304201*cos(\t r)+1*0.5072937401304201*sin(\t r)});
\draw [shift={(-1.5,5.958695652173913)},line width=1pt,color=ColBB!80]  plot[domain=0.08242154867336304:3.0591711049164303,variable=\t]({1*0.5017031484347476*cos(\t r)+0*0.5017031484347476*sin(\t r)},{0*0.5017031484347476*cos(\t r)+1*0.5017031484347476*sin(\t r)});
\draw [shift={(-7,5)},line width=1pt,color=ColBB!80]  plot[domain=-3.141592653589793:0,variable=\t]({1*1*cos(\t r)+0*1*sin(\t r)},{0*1*cos(\t r)+1*1*sin(\t r)});
\draw [shift={(-3,5)},line width=1pt,color=ColBB!80]  plot[domain=-3.141592653589793:0,variable=\t]({1*1*cos(\t r)+0*1*sin(\t r)},{0*1*cos(\t r)+1*1*sin(\t r)});
\draw [shift={(-4.5,5.94)},line width=1pt,color=ColBB!80]  plot[domain=0.11942892601833573:3.0221637275714572,variable=\t]({1*0.5035871324805659*cos(\t r)+0*0.5035871324805659*sin(\t r)},{0*0.5035871324805659*cos(\t r)+1*0.5035871324805659*sin(\t r)});
\draw [shift={(-3.5,5.1105)},line width=1pt,color=ColBB!80]  plot[domain=3.359096592577117:6.065681368192262,variable=\t]({1*0.5120646931785084*cos(\t r)+0*0.5120646931785084*sin(\t r)},{0*0.5120646931785084*cos(\t r)+1*0.5120646931785084*sin(\t r)});
\begin{scriptsize}
\draw [fill=ColBB!20,shift={(-4,6)},rotate=180] (0,0) ++(0 pt,3pt) -- ++(2.598076211353316pt,-4.5pt)--++(-5.196152422706632pt,0 pt) -- ++(2.598076211353316pt,4.5pt);
\draw [fill=ColBB!20,shift={(-6,6)},rotate=180] (0,0) ++(0 pt,3pt) -- ++(2.598076211353316pt,-4.5pt)--++(-5.196152422706632pt,0 pt) -- ++(2.598076211353316pt,4.5pt);
\draw [fill=ColBB!20,shift={(-1,6)},rotate=180] (0,0) ++(0 pt,3pt) -- ++(2.598076211353316pt,-4.5pt)--++(-5.196152422706632pt,0 pt) -- ++(2.598076211353316pt,4.5pt);
\draw [fill=ColBB!20,shift={(-8,5)}] (0,0) ++(0 pt,3pt) -- ++(2.598076211353316pt,-4.5pt)--++(-5.196152422706632pt,0 pt) -- ++(2.598076211353316pt,4.5pt);
\draw [fill=ColBB!20,shift={(-4,5)}] (0,0) ++(0 pt,3pt) -- ++(2.598076211353316pt,-4.5pt)--++(-5.196152422706632pt,0 pt) -- ++(2.598076211353316pt,4.5pt);
\end{scriptsize}\end{tikzpicture}}
\end{minipage}
\label{RIP2}}

\subfloat[][Diagram with all apparent (except reflexive) relations.]{
\begin{minipage}[c]{.5\textwidth}
\centering
\scalebox{.8}
{\begin{tikzpicture}
\draw (-4.25,1.7) node[anchor=north west] {$\IntervalPoset_1$};
\draw (-3.25,1.7) node[anchor=north west] {$\IntervalPoset_2$};
\draw (-2.25,1.7) node[anchor=north west] {$\IntervalPoset_3$};
\draw (-1.25,1.7) node[anchor=north west] {$\IntervalPoset_4$};
\draw (-0.25,1.7) node[anchor=north west] {$\IntervalPoset_5$};
\draw (0.75,1.7) node[anchor=north west] {$\IntervalPoset_6$};
\draw (1.75,1.7) node[anchor=north west] {$\IntervalPoset_7$};
\draw (2.75,1.7) node[anchor=north west] {$\IntervalPoset_8$};
\draw [shift={(-2,2)},line width=1pt,color=ColBB!80]  plot[domain=0:3.141592653589793,variable=\t]({1*2*cos(\t r)+0*2*sin(\t r)},{0*2*cos(\t r)+1*2*sin(\t r)});
\draw [shift={(-0.5,2.0660526315789474)},line width=1pt,color=ColBB!80]  plot[domain=-0.1313447195593218:3.2729373731491145,variable=\t]({1*0.5043440791151454*cos(\t r)+0*0.5043440791151454*sin(\t r)},{0*0.5043440791151454*cos(\t r)+1*0.5043440791151454*sin(\t r)});
\draw [shift={(-2.5,2.068859649122807)},line width=1pt,color=ColBB!80]  plot[domain=-0.1368583851894547:3.278451038779247,variable=\t]({1*0.5047193787416098*cos(\t r)+0*0.5047193787416098*sin(\t r)},{0*0.5047193787416098*cos(\t r)+1*0.5047193787416098*sin(\t r)});
\draw [shift={(-1,2)},line width=1pt,color=ColBB!80]  plot[domain=0:3.141592653589793,variable=\t]({1*1*cos(\t r)+0*1*sin(\t r)},{0*1*cos(\t r)+1*1*sin(\t r)});
\draw [shift={(-1.5,1.9525862068965516)},line width=1pt,color=ColBB!80]  plot[domain=0.03159867435861777:3.1099939792311755,variable=\t]({1*1.5007491688408348*cos(\t r)+0*1.5007491688408348*sin(\t r)},{0*1.5007491688408348*cos(\t r)+1*1.5007491688408348*sin(\t r)});
\draw [shift={(2.5,2.066052631578948)},line width=1pt,color=ColBB!80]  plot[domain=-0.1313447195593227:3.2729373731491154,variable=\t]({1*0.5043440791151456*cos(\t r)+0*0.5043440791151456*sin(\t r)},{0*0.5043440791151456*cos(\t r)+1*0.5043440791151456*sin(\t r)});
\draw [shift={(0.5,0.9519090909090907)},line width=1pt,color=ColBB!80]  plot[domain=-3.237479516405944:0.09588686281615068,variable=\t]({1*0.5023074113898682*cos(\t r)+0*0.5023074113898682*sin(\t r)},{0*0.5023074113898682*cos(\t r)+1*0.5023074113898682*sin(\t r)});
\draw [shift={(1,1)},line width=1pt,color=ColBB!80]  plot[domain=-3.141592653589793:0,variable=\t]({1*1*cos(\t r)+0*1*sin(\t r)},{0*1*cos(\t r)+1*1*sin(\t r)});
\draw [shift={(-3,1)},line width=1pt,color=ColBB!80]  plot[domain=-3.141592653589793:0,variable=\t]({1*1*cos(\t r)+0*1*sin(\t r)},{0*1*cos(\t r)+1*1*sin(\t r)});
\draw [shift={(-3.5,1.051)},line width=1pt,color=ColBB!80]  plot[domain=3.243241109473272:6.181536851296107,variable=\t]({1*0.5025942697643891*cos(\t r)+0*0.5025942697643891*sin(\t r)},{0*0.5025942697643891*cos(\t r)+1*0.5025942697643891*sin(\t r)});
\begin{scriptsize}
\draw [fill=ColBB!20,shift={(0,2)},rotate=180] (0,0) ++(0 pt,3pt) -- ++(2.598076211353316pt,-4.5pt)--++(-5.196152422706632pt,0 pt) -- ++(2.598076211353316pt,4.5pt);
\draw [fill=ColBB!20,shift={(-2,2)},rotate=180] (0,0) ++(0 pt,3pt) -- ++(2.598076211353316pt,-4.5pt)--++(-5.196152422706632pt,0 pt) -- ++(2.598076211353316pt,4.5pt);
\draw [fill=ColBB!20,shift={(3,2)},rotate=180] (0,0) ++(0 pt,3pt) -- ++(2.598076211353316pt,-4.5pt)--++(-5.196152422706632pt,0 pt) -- ++(2.598076211353316pt,4.5pt);
\draw [fill=ColBB!20,shift={(0,1)}] (0,0) ++(0 pt,3pt) -- ++(2.598076211353316pt,-4.5pt)--++(-5.196152422706632pt,0 pt) -- ++(2.598076211353316pt,4.5pt);
\draw [fill=ColBB!20,shift={(-4,1)}] (0,0) ++(0 pt,3pt) -- ++(2.598076211353316pt,-4.5pt)--++(-5.196152422706632pt,0 pt) -- ++(2.598076211353316pt,4.5pt);
\end{scriptsize}\end{tikzpicture}}
\end{minipage}}
\caption{Different representations of an interval-poset of size $8$.}
\label{IP8ex}
\end{figure}
\smallbreak

Let $n \geq 0$ and $\IntervalPoset, \IntervalPoset'\in\SetIntervalPoset(n)$ and $(\TreeS,\TreeT) := \rho(\IntervalPoset)$, $(\TreeS',\TreeT') := \rho(\IntervalPoset')$.
Let $(\star)$ (resp.\ $(\diamond)$) the following condition: $\IntervalPoset'$ is obtained from $\IntervalPoset$ by adding (resp.\ removing) only decreasing (resp.\ increasing) relations of target a vertex $\IntervalPoset_k$, such that if only one of these decreasing (resp.\ increasing) relations is removed (resp.\ added), then either $\IntervalPoset$ is obtained or the object obtained is not an interval-poset.
\smallbreak

For the sequel, we need to recall that $(\TreeS',\TreeT')$ covers $(\TreeS,\TreeT)$ if and only if $\IntervalPoset$ and $\IntervalPoset'$ satisfy either $(\star)$ or $(\diamond)$.
\smallbreak

\begin{Lemma}\label{rotation/ip}
The interval-posets $\IntervalPoset$ and $\IntervalPoset'$ satisfy $(\star)$ (resp.\ $(\diamond)$) for the vertex $\IntervalPoset_i$ (resp.\ $\IntervalPoset_j$) if and only if $\TreeS'$ (resp.\ $\TreeT'$) is obtained by a unique right rotation of the edge $(i,j)$ in $\TreeS$ (resp.\ $\TreeT$) and $\TreeT' = \TreeT$ (resp.\ $\TreeS' = \TreeS$).
\end{Lemma}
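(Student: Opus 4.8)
The plan is to use the fact that the bijection $\rho$ handles the two kinds of relations of an interval-poset independently: it recovers the binary tree $S$ from the restriction of $P$ to its \emph{decreasing} relations, and the binary tree $T$ from the restriction of $P$ to its \emph{increasing} relations, and neither restriction sees the other. Condition $(\star)$ only modifies decreasing relations whose goal is $x_k$, so it leaves the increasing part of $P$ untouched and hence $T'=T$ at once; dually $(\diamond)$ only modifies increasing relations, so $S'=S$ at once. It is then enough to prove one of the two equivalences, say the $(\star)$/$S$ one, the other following by the mirror symmetry between decreasing forests and increasing forests — concretely, by running the same argument on the left subtrees of $T$ and on Figure~\ref{OpéRotC} instead of on the right subtrees of $S$ and Figure~\ref{OpéRotD}, noting that there the relevant relations are \emph{removed} rather than added, which is consistent with a right rotation moving $T$ \emph{upward} in the Tamari order while a smaller tree carries \emph{more} increasing relations.

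For the decreasing side I would first make explicit the dictionary provided by $\rho$: by the interval-poset property~\ref{pip1}, for each vertex $x_i$ the set $\{\,j>i:x_j\lhd x_i\,\}$ of its incoming decreasing relations is an interval $[i+1,r_i]$ of infix positions, and in the construction of $\rho$ through the decreasing forest this interval is precisely the set of infix positions of the right subtree of the node $i$ of $S$; equivalently $r_i$ is the largest infix position appearing in the subtree of $S$ rooted at $i$. Next comes the geometric heart of the proof: computing the effect of the right rotation of the edge $(k,l)$ of $S$ on this data, which is exactly what Figure~\ref{OpéRotD} depicts. Writing the local move as $((A,B),C)\mapsto(A,(B,C))$, the infix positions occur in the blocks $A$, then $k$, then $B=[k+1,l-1]$, then $l$, then $C=[l+1,r_l]$, and none of these blocks is disturbed by the rotation. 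A direct inspection of the two trees shows that the right subtree of $k$ is the \emph{only} subtree that changes, growing from $B$ to $B\cup\{l\}\cup C=[k+1,r_l]$, while the right subtree of $l$ stays equal to $C$ and every other subtree is untouched. Translated back through $\rho$, this says that $P'$ is obtained from $P$ by adding exactly the decreasing relations $x_j\lhd x_k$ for $j\in[l,r_l]$ — all of goal $x_k$ — and nothing else, and that $T'=T$. This already settles the implication ``rotation $\Rightarrow(\star)$'' once one checks minimality: removing $x_j\lhd x_k$ with $l\leq j<r_l$ destroys property~\ref{pip1} for $x_k$ (since $x_{r_l}\lhd x_k$ survives and $k<j<r_l$), and removing $x_{r_l}\lhd x_k$ either returns $P$ (when $C$ is empty, i.e.\ $r_l=l$) or leaves a family of right-subtree intervals that is no longer nested (the interval attached to $l$ would not be contained in the one attached to $k$), so is not the family of a binary tree; in all non-trivial cases the object obtained is not an interval-poset.

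For the converse, assume $P$ and $P'$ satisfy $(\star)$ for $x_k$. Only decreasing relations of goal $x_k$ are added and $P'$ is an interval-poset, so by property~\ref{pip1} the incoming decreasing relations of $x_k$ in $P'$ form an interval $[k+1,r_k']$ with $r_k'>r_k$, where $[k+1,r_k]$ is the corresponding interval for $x_k$ in $P$. The point is that, keeping all other right subtrees of $S$ fixed, the values of $r_k'$ producing a genuine binary tree form a discrete increasing list $r_k<r_k^{(1)}<r_k^{(2)}<\cdots$, namely the values reached by successively rotating $k$ up the chain of its left ancestors in $S$; the first of them, $r_k^{(1)}$, is attained exactly by the right rotation of the edge $(k,l)$ with $l:=r_k+1$, which forces $k$ to be the left child of $l$ in $S$ and gives $r_k^{(1)}=r_l$ (this also shows $r_k'$ cannot lie strictly between two consecutive listed values, for then $[k+1,r_k']$ is not a binary tree and $P'$ is not an interval-poset). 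The minimality clause of $(\star)$ now forces $r_k'=r_k^{(1)}=r_l$: if $r_k'$ were a later value $r_k^{(t)}$ with $t\geq 2$, then undoing the last rotation — which on the interval-poset side is one of the ``minimal'' modifications that $(\star)$ permits — would leave the interval-poset $[k+1,r_k^{(t-1)}]$, which is neither $P$ nor a non-interval-poset, contradicting $(\star)$. Hence $S'$ is the single right rotation of the edge $(k,l)$ in $S$ and $T'=T$, as required; the case of $(\diamond)$ follows by the mirror symmetry described above.

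The step I expect to be the main obstacle is this last one: pinning down, combinatorially, which one-node enlargements of a right subtree of $S$ yield binary trees, and checking that the minimality clause of $(\star)$ singles out the smallest non-trivial one — thereby excluding simultaneously enlargements that are ``too small'' (corresponding to no binary tree, hence to no interval-poset) and those that are ``too large'' (corresponding to a composition of several rotations, hence failing minimality). Everything else is either the bookkeeping of $\rho$ or the direct local computation on the three subtrees $A$, $B$, $C$ of Figures~\ref{OpéRotD} and~\ref{OpéRotC}.
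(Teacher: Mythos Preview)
Your proof is correct and follows essentially the same approach as the paper: both analyze, via the bijection $\rho$, how a right rotation of the edge $(k,l)$ modifies the decreasing (resp.\ increasing) forest and hence the set of decreasing (resp.\ increasing) relations of goal $x_k$ (resp.\ $x_l$), and then match this against condition $(\star)$ (resp.\ $(\diamond)$). Your treatment is in fact more explicit than the paper's on the minimality clause---you spell out why removing a single newly added relation either returns $P$ or breaks the interval-poset axioms, and in the converse direction you carefully identify the discrete list of admissible values $r_k^{(t)}$ and argue that minimality forces $t=1$---whereas the paper leans on Figures~\ref{OpéRotD}, \ref{OpéRotC}, \ref{IPD}, and~\ref{IPC} for these points.
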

\begin{proof}
Suppose $\IntervalPoset$ and $\IntervalPoset'$ satisfy $(\star)$ for the vertex $\IntervalPoset_i$. Therefore, $\IntervalPoset'$ has more decreasing relations of target $\IntervalPoset'_i$ than the vertex $\IntervalPoset_i$ in $\IntervalPoset$. Suppose that the vertices $\IntervalPoset_j$ and $\IntervalPoset_i$ are not related in $\IntervalPoset$, and that $\IntervalPoset'_j$ and $\IntervalPoset'_i$ are related in $\IntervalPoset'$, with $i<j$. Then, by the  interval-poset property~\ref{pip1}, for any $\IntervalPoset'_k$ such that $i<k<j$, $\IntervalPoset'_{k}\lhd \IntervalPoset'_i$. Moreover, if we remove only one of these decreasing relations, we obtain either $\IntervalPoset$ or an object that is no longer an interval-poset. This means that the number of descending relations added in $\IntervalPoset'$ is minimal, or equivalently, that the vertex $\IntervalPoset_j$ is closest to the vertex $\IntervalPoset_i$ such that $\IntervalPoset_j$ and $\IntervalPoset_i$ are not related in $\IntervalPoset$ and $i<j$.
This case is depicted in Figure~\ref{IPD}. 
\begin{figure}[ht!]
\scalebox{0.9}{\begin{tikzpicture}
\draw (-5.1,3.3) node[anchor=north west] {$\overbrace{\dots \IntervalPoset_{i-1}}^{\TreeA} \quad\enspace \IntervalPoset_{i}\qquad\quad {\overbrace{\IntervalPoset_{i+1} \dots \IntervalPoset_{j-1}}^{\TreeB}}\quad\quad \IntervalPoset_{j}\quad\qquad \overbrace{\IntervalPoset_{j+1} \dots}^\TreeC$};
\draw [shift={(-1.5,2.276935483870968)},line width=1pt,color=ColBB!80]  plot[domain=3.324160505389721:6.100617455379658,variable=\t]({1*1.5253502096983658*cos(\t r)+0*1.5253502096983658*sin(\t r)},{0*1.5253502096983658*cos(\t r)+1*1.5253502096983658*sin(\t r)});
\draw [shift={(-1,2.2111111111111112)},line width=1pt,dotted,color=ColBB!80]  plot[domain=3.2467587768116766:6.178019183957702,variable=\t]({1*2.011111111111111*cos(\t r)+0*2.011111111111111*sin(\t r)},{0*2.011111111111111*cos(\t r)+1*2.011111111111111*sin(\t r)});
\draw [shift={(2.1489473684210525,2.1310526315789473)},line width=1pt,color=ColBB!80]  plot[domain=3.2551650253318054:6.187003801461131,variable=\t]({1*1.1563973571594799*cos(\t r)+0*1.1563973571594799*sin(\t r)},{0*1.1563973571594799*cos(\t r)+1*1.1563973571594799*sin(\t r)});
\begin{scriptsize}
\draw [fill=ColBB!20,shift={(-3,2)}] (0,0) ++(0 pt,3pt) -- ++(2.598076211353316pt,-4.5pt)--++(-5.196152422706632pt,0 pt) -- ++(2.598076211353316pt,4.5pt);
\draw [fill=ColBB!20,shift={(1,2)}] (0,0) ++(0 pt,3pt) -- ++(2.598076211353316pt,-4.5pt)--++(-5.196152422706632pt,0 pt) -- ++(2.598076211353316pt,4.5pt);
\end{scriptsize}
\end{tikzpicture}}
\caption{Interval-poset of the decreasing forest before (without dotted line) and after (with dotted line) the right rotation of the edge $(i,j)$, where $\TreeA$, $\TreeB$ and $\TreeC$ may be empty.}
\label{IPD}
\end{figure}
By the bijection $\rho$, add these decreasing relations of target $\IntervalPoset_i$ in $\IntervalPoset$ leads to the decreasing forest induced by $\TreeS'$ represented by Figure~\ref{OpéRotD}.
\begin{figure}[ht]
\subfloat[][\small Binary trees $\TreeS$ and $\TreeS'$ (resp.\ $\TreeT$ and $\TreeT'$).]{
\begin{minipage}[c]{.55\textwidth}
\centering
\scalebox{1}{
    \begin{tikzpicture}[scale=.5,Centering]
        \node[Node](1)at(5,10){$j$};
        \node[BigNode](2)at(1,6){$A$};
        \node[BigNode](3)at(5,6){$B$};
        \node[Node](4)at(3,8){$i$};
        \node[BigNode](5)at(7,8){$C$};
        \node[Node](6)at(13,10){$i$};
        \node[BigNode](7)at(11,8){$A$};
        \node[Node](8)at(15,8){$j$};
        \node[BigNode](9)at(17,6){$C$};
        \node[BigNode](10)at(13,6){$B$};

        \draw[PathStep](4)--(1);
        \draw[PathStep](2)--(4);
        \draw[PathStep](3)--(4);
        \draw[PathStep](5)--(1);
        \draw[PathStep](6)--(7);
        \draw[PathStep](6)--(8);
        \draw[PathStep](8)--(9);
        \draw[PathStep](8)--(10);
        \node(r)at(5,11.5){};
        \draw[Edge, ColAB!90,dotted](r)--(1);
        \node(rr)at(13,11.5){};
        \draw[Edge, ColAB!90,dotted](rr)--(6);
        \draw[Edge, ColAB!90](8,9)edge[->]node[above]{r. rot.}(10,9);
    \end{tikzpicture}}
\end{minipage}
\label{OpéRotO}}
\vspace{0.5cm}

\subfloat[][\small Decreasing forests induced by $\TreeS$ and $\TreeS'$.]{
\begin{minipage}[c]{.5\textwidth}
\centering
\scalebox{1}{
    \begin{tikzpicture}[scale=.5,Centering]
        \node[BigNode](1)at(3,10){$\TreeA$};
        \node[Node](2)at(5,10){$i$};
        \node[BigNode](3)at(5,8.5){$\TreeB$};
        \node[Node](4)at(7,10){$j$};
        \node[BigNode](5)at(7,8.5){$\TreeC$};
        \node[BigNode](6)at(10,10){$\TreeA$};
        \node[BigNode](7)at(13,7.5){$\TreeC$};
        \node[BigNode](8)at(11,9){$\TreeB$};
        \node[Node](9)at(12,10){$i$};
        \node[Node](10)at(13,9){$j$};
        \draw[PathStep](2)--(3);
        \draw[PathStep](4)--(5);
        \draw[PathStep](10)--(9);
        \draw[PathStep](9)--(8);
        \draw[PathStep](7)--(10);
        \node[inner sep=0 mm](r)at(5,11.5){};
        \draw[Edge,ColAB!90,dotted](r)--(4);
        \draw[Edge,ColAB!90,dotted](r)--(1);
        \draw[Edge,ColAB!90,dotted](r)--(2);
        \node[inner sep=0 mm](rr)at(11,11.5){};
        \draw[Edge,ColAB!90,dotted](rr)--(9);
        \draw[Edge,ColAB!90,dotted](rr)--(6);
    \end{tikzpicture}}
\end{minipage}
\label{OpéRotD}}
\subfloat[][\small Increasing forests induced by $\TreeT$ and $\TreeT'$.]{
\begin{minipage}[c]{.5\textwidth}
\centering
    \scalebox{1}{
    \begin{tikzpicture}[scale=.5,Centering]
        \node[Node](1)at(3,10){$j$};
        \node[Node](2)at(2,9){$i$};
        \node[BigNode](3)at(4,9){$\TreeB$};
        \node[BigNode](4)at(5,10){$\TreeC$};
        \node[BigNode](5)at(2,7.5){$\TreeA$};
        \node[Node](6)at(8,10){$i$};
        \node[BigNode](7)at(8,8.5){$\TreeA$};
        \node[BigNode](8)at(10,8.5){$\TreeB$};
        \node[Node](9)at(10,10){$j$};
        \node[BigNode](10)at(12,10){$\TreeC$};
        \draw[PathStep](1)--(3);
        \draw[PathStep](1)--(2);
        \draw[PathStep](2)--(5);
        \draw[PathStep](6)--(7);
        \draw[PathStep](9)--(8);
        \node[inner sep=0 mm](r)at(4,11.5){};
        \draw[Edge,ColAB!90,dotted](r)--(4);
        \draw[Edge,ColAB!90,dotted](r)--(1);
        \node[inner sep=0 mm](rr)at(10,11.5){};
        \draw[Edge,ColAB!90,dotted](rr)--(9);
        \draw[Edge,ColAB!90,dotted](rr)--(10);
        \draw[Edge,ColAB!90,dotted](rr)--(6);
    \end{tikzpicture}}
\end{minipage}
\label{OpéRotC}}
\caption{Right rotation of the edge $(i,j)$ in the binary tree $\TreeS$ (resp.\ $\TreeT$), where $\TreeA$, $\TreeB$ and $\TreeC$ are subtrees.}
\label{OpéRot}
\end{figure}
A unique right rotation is then made between the trees $\TreeS$ and $\TreeS'$ (see Figure~\ref{OpéRotO}).
Furthermore, since the increasing relations are unchanged between $\IntervalPoset$ and $\IntervalPoset'$, the increasing forests induced by $\TreeT$ and $\TreeT'$ are the same, and thus $\TreeT' = \TreeT$.
\smallbreak

Reciprocally, suppose that $\TreeS'$ is obtained by a unique right rotation of the edge $(i,j)$ in $\TreeS$ and that $\TreeT'=\TreeT$. The case is depicted by Figure~\ref{OpéRotO}, and the two decreasing forests induced by $\TreeS$ and $\TreeS'$ are depicted by Figure~\ref{OpéRotD}. By the bijection $\rho$, we then obtain the interval-poset whose restriction to decreasing relations is shown by Figure~\ref{IPD}. Since $\TreeT'=\TreeT$, the increasing relations of the interval-posets associated with $(\TreeS,\TreeT)$ and $(\TreeS',\TreeT')$ are the same. Finally, $\IntervalPoset'$ is obtained by adding only decreasing relations of target $\IntervalPoset_i$ in $\IntervalPoset$. Furthermore, if only one of these relations is removed, then either $\IntervalPoset$ is obtained, or the object obtained is not an interval-poset. This means that $\IntervalPoset$ and $\IntervalPoset'$ satisfy $(\star)$.
\smallbreak

Symmetrically, we show that $\IntervalPoset$ and $\IntervalPoset'$ satisfy $(\diamond)$ for $\IntervalPoset_j$ if and only if $\TreeT'$ is obtained by a unique right rotation of the edge $(i,j)$ in $\TreeT$ and $\TreeS'=\TreeS$. Figure~\ref{OpéRotC} and Figure~\ref{IPC} depicts this case.
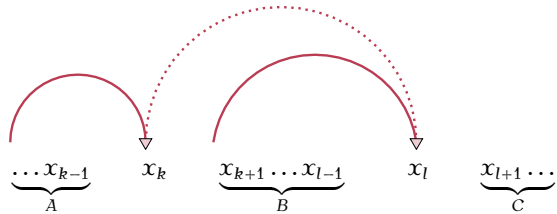
\begin{figure}[ht!]
\scalebox{0.9}{\begin{tikzpicture}
\draw (-5.1,1.8) node[anchor=north west] {$\underbrace{\dots \IntervalPoset_{i-1}}_{\TreeA} \quad\quad \IntervalPoset_{i}\quad\quad \underbrace{\IntervalPoset_{i+1} \dots \IntervalPoset_{j-1}}_\TreeB\quad\quad\enspace \IntervalPoset_{j}\quad\quad \underbrace{\IntervalPoset_{j+1} \dots}_\TreeC$};
\draw [shift={(-4,2)},line width=1pt,color=ColBB!80]  plot[domain=0:3.141592653589793,variable=\t]({1*1*cos(\t r)+0*1*sin(\t r)},{0*1*cos(\t r)+1*1*sin(\t r)});
\draw [shift={(-1,2)},line width=1pt,dotted,color=ColBB!80]  plot[domain=0:3.141592653589793,variable=\t]({1*2*cos(\t r)+0*2*sin(\t r)},{0*2*cos(\t r)+1*2*sin(\t r)});
\draw [shift={(-0.5,1.7775968992248063)},line width=1pt,color=ColBB!80]  plot[domain=0.14719634926715644:2.994396304322637,variable=\t]({1*1.5163980807276238*cos(\t r)+0*1.5163980807276238*sin(\t r)},{0*1.5163980807276238*cos(\t r)+1*1.5163980807276238*sin(\t r)});
\begin{scriptsize}
\draw [fill=ColBB!20,shift={(-3,2)},rotate=180] (0,0) ++(0 pt,3pt) -- ++(2.598076211353316pt,-4.5pt)--++(-5.196152422706632pt,0 pt) -- ++(2.598076211353316pt,4.5pt);
\draw [fill=ColBB!20,shift={(1,2)},rotate=180] (0,0) ++(0 pt,3pt) -- ++(2.598076211353316pt,-4.5pt)--++(-5.196152422706632pt,0 pt) -- ++(2.598076211353316pt,4.5pt);
\end{scriptsize}\end{tikzpicture}}
\caption{Interval-poset of the increasing forest before (with dotted lines) and after (without dotted lines) the right rotation of the edge $(i,j)$, where $\TreeA$, $\TreeB$ and $\TreeC$ may be empty.}
\label{IPC}
\end{figure}
\end{proof}

\section{Cubic coordinates and Tamari intervals}\label{sec:coordonnées-cubiques}

The aim of this section is to build the poset of the cubic coordinates, then to establish the poset isomorphism between this poset and the poset of the Tamari intervals. To achieve this goal, we first define the Tamari interval diagrams based on the interval-posets. The cubic coordinates are then obtained from the Tamari interval diagrams.
\smallbreak

\subsection{Tamari interval diagrams}

Let us give the definition of a Tamari diagram, as formulated in~\cite{BW97}.
For any $n \geq 0$, a \Def{Tamari diagram} is a word $u$ of length $n$ on the alphabet $\mathbb{N}$ which satisfies the two following conditions:
\begin{enumerate}[label=(\roman*)]
\item \label{condDT1+} $0\leq u_i \leq n - i$ for all $i\in [n]$,
\item \label{condDT2+} $u_{i+j} \leq u_i - j$ for all $i\in [n]$ and $j\in [0, u_i]$.
\end{enumerate}
The \Def{size} of a Tamari diagram is its number of letters.
For instance, the sets of Tamari diagrams of size $2$, $3$ and $4$ are
\begin{equation}
\begin{split}
&\qquad \qquad \qquad \{00, 10\}, \qquad \qquad \qquad \{000, 100, 010, 200, 210\},\\ 
&\{0000, 0010, 0100, 0200, 0210, 1000, 1010, 
2000, 2100, 3000, 3010, 3100, 3200, 3210\}.
\end{split}
\end{equation}
\smallbreak

In the literature, Tamari diagrams are also known as bracket vectors, objects inspired by the right bracketing introduced in~\cite{HT72} by Huang and Tamari. Furthermore, Tamari diagrams are known to be enumerated by Catalan numbers 
\begin{equation}
\mathrm{cat}(n) := \frac{1}{n + 1} \binom{2n}{n}.
\end{equation}
\smallbreak

A dual version of Tamari diagrams can be defined by considering the opposite of Conditions~\ref{condDT1+} and \ref{condDT2+}. 
For any $n \geq 0$, a \Def{dual Tamari diagram} is a word $v$ of length $n$ on the alphabet $\mathbb{N}$ which satisfies the two following conditions:
\begin{enumerate}[label=(\roman*)]
\item \label{condDTD1+} $0\leq v_i \leq i - 1$ for all $i\in [n]$,
\item \label{condDTD2+} $v_{i-j} \leq v_i - j$ for all $i\in [n]$ and $j\in [0, v_i]$.
\end{enumerate}
The \Def{size} of a dual Tamari diagram is its number of letters.
In other words, $v=v_1\dots v_n$ is a dual Tamari diagram if and only if $v_n\dots v_1$ is a Tamari diagram.
\smallbreak

Note that the first condition of a Tamari diagram $u$ and of a dual Tamari diagram $v$ of size $n$ implies that $u_n = 0$ and $v_1 = 0$.
\smallbreak

A graphical representation of a Tamari diagram $u$ of size $n$ by needles and diagonals provides a simple way to check Condition~\ref{condDT2+} of a Tamari diagram.
For each position $i \in [n]$, we draw a needle from the point $(i - 1, 0)$ to the point $\Par{i - 1, u_i}$ in the
Cartesian plane. Condition~\ref{condDT2+} says that one can draw lines of slope $-1$ passing
through the $x$-axis and the top of each needle without crossing any other needle. For instance, the Tamari diagram $9021043100$ is drawn by Figure~\ref{dtdtd}. One can observe that none of its diagonals, drawn as dotted lines, crosses a needle.
\smallbreak

Likewise, a graphical representation can be given for the dual Tamari diagram $v$ of size $n$. One draws $v$ in the same way as Tamari diagram, and Condition~\ref{condDTD2+} says that one can draw lines of slope $1$ passing through the $x$-axis and the top of each needle without crossing any other needle. Figure~\ref{dtdtd} also depicts the dual Tamari diagram $0010040002$. 
\smallbreak

\begin{figure}[ht]
    \centering
    \begin{minipage}{.45\textwidth}
    \centering
    \scalebox{1}{
    \begin{tikzpicture}[scale=.4,Centering]
        \draw[Grid](0,0)grid(9,9);
        \draw[PathStep, draw=ColAB!70](0,0)--(9,0);
        \node[PathNode](1)at(0,9){};
        \node[PathNode](2)at(1,0){};
        \node[PathNode](3)at(2,2){};
        \node[PathNode](4)at(3,1){};
        \node[PathNode](5)at(4,0){};
        \node[PathNode](6)at(5,4){};
        \node[PathNode](7)at(6,3){};
        \node[PathNode](8)at(7,1){};
        \node[PathNode](9)at(8,0){};
        \node[PathNode](10)at(9,0){};
        \draw[PathStep](0,0)--(1);
        \draw[PathStep](2,0)--(3);
        \draw[PathStep](3,0)--(4);
        \draw[PathStep](5,0)--(6);
        \draw[PathStep](6,0)--(7);
        \draw[PathStep](7,0)--(8);
        \draw[PathDiag](1)--(6);
        \draw[PathDiag](6)--(7);
        \draw[PathDiag](7)--(10);
        \draw[PathDiag](3)--(4);
        \draw[PathDiag](4)--(5);
        \draw[PathDiag](8)--(9);
    \end{tikzpicture}}
\end{minipage}
    \qquad
    \begin{minipage}{.45\textwidth}
    \centering
    \scalebox{1}{
    \begin{tikzpicture}[scale=.4,Centering]
        \draw[Grid](0,0)grid(9,9);
        \draw[PathStep, draw=ColAB!70](0,0)--(9,0);
        \node[PathNode](1)at(0,0){};
        \node[PathNode](2)at(1,0){};
        \node[PathNode](3)at(2,1){};
        \node[PathNode](4)at(3,0){};
        \node[PathNode](5)at(4,0){};
        \node[PathNode](6)at(5,4){};
        \node[PathNode](7)at(6,0){};
        \node[PathNode](8)at(7,0){};
        \node[PathNode](9)at(8,0){};
        \node[PathNode](10)at(9,2){};
        \draw[PathStep](2,0)--(3);
        \draw[PathStep](5,0)--(6);
        \draw[PathStep](9,0)--(10);
        \draw[PathDiag](3)--(6);
        \draw[PathDiag](2)--(3);
        \draw[PathDiag](8)--(10);
    \end{tikzpicture}}
\end{minipage}
    \caption{A Tamari diagram $9021043100$ (on the left) and a dual Tamari diagram $0010040002$ (on the right) of size $10$.}
\label{dtdtd}
\end{figure}
\smallbreak

For any $n\geq 0$, the set of Tamari diagrams of size $n$ is in bijection with $\SetBinaryTrees(n)$. Indeed, one builds from a Tamari diagram $u$ of size $n$ a binary tree $\TreeS$ recursively as follows. If $n = 0$, $\TreeS$ is defined as the leaf. Otherwise, let $i$ be the smallest position in $u$ such that $u_{i}$ is the maximum allowed value, namely $n - i$. Then $\TreeS_1 := u_1\dots u_{i-1}$ and $\TreeS_2 := u_{i + 1}\dots u_{n}$ are also Tamari diagrams. One forms $\TreeS$ by grafting the binary trees obtained recursively by this process applied on $\TreeS_1$ and on $\TreeS_2$ to a new node.
Reciprocally, for each node of index $i$ of the tree $\TreeS$, labeled with an infix traversal, the value of the $i$-th letter of the corresponding Tamari diagram is given by the number of nodes in the right subtree of the node $i$. The complete demonstration is given in~\cite{Pal86}.
\smallbreak

In the case of dual Tamari diagrams, the construction of the binary tree $\TreeT$ is also recursive, except that it is the maximum position $i$ in the dual Tamari diagram whose value is the highest allowed on that section of the word that should be chosen first. Similarly for the reciprocal, the procedure is identical, except that the value of the $i$-th letter in the dual Tamari diagram is given by the number of nodes in the left subtree of the node $i$ in the tree $\TreeT$. 
\smallbreak

For instance, in Figure~\ref{arbrebinaire}, the Tamari diagram is $10040210$ and the dual Tamari diagram is $00230100$. Figure~\ref{fig:bijectionTD-BT} depicts the corresponding binary tree of the Tamari diagram $1003010$.
\smallbreak

\begin{figure}
\center
\scalebox{1}{
       \begin{tikzpicture}[Centering,xscale=2,yscale=2]
        \node[](a)at(0,0){
        \begin{tikzpicture}[scale=.5,Centering]
        \node[Node](1)at(-2,-2){$1$};
        \node[Node](2)at(-1,-3){$2$};
        \node[Node](3)at(-1,-1){$3$};
        \node[Node](4)at(0,0){$4$};
        \node[Node](5)at(0,-2){$5$};
        \node[Node](6)at(1,-1){$6$};
        \node[Node](7)at(2,-2){$7$};
        \node[Leaf](11)at(-2.5,-2.5){};
        \node[Leaf](22)at(-1.5,-3.5){};
        \node[Leaf](33)at(-0.5,-1.5){};
        \node[Leaf](222)at(-0.5,-3.5){};
        \node[Leaf](55)at(0.5,-2.5){};
        \node[Leaf](555)at(-0.5,-2.5){};
        \node[Leaf](77)at(2.5,-2.5){};
        \node[Leaf](777)at(1.5,-2.5){};
        \draw[PathStep](1)--(2);
        \draw[PathStep](3)--(1);
        \draw[PathStep](4)--(3);
        \draw[PathStep](4)--(6);
        \draw[PathStep](6)--(5);
        \draw[PathStep](6)--(7);
        \draw[PathStep](1)--(11);
        \draw[PathStep](2)--(22);
        \draw[PathStep](2)--(222);
        \draw[PathStep](3)--(33);
        \draw[PathStep](5)--(55);
        \draw[PathStep](5)--(555);
        \draw[PathStep](7)--(77);
        \draw[PathStep](7)--(777);
        \node(r)at(0,1){};
        \draw[Edge](r)--(4);
    \end{tikzpicture}
        };
       \node[](b)at(4,0){
       \begin{tikzpicture}[scale=.4,Centering]
        \draw[Grid](0,0)grid(6,3);
        \draw[PathStep, draw=ColAB!70](0,0)--(6,0);
        \node[PathNode](1)at(0,1){};
        \node[PathNode](2)at(1,0){};
        \node[PathNode](3)at(2,0){};
        \node[PathNode](4)at(3,3){};
        \node[PathNode](5)at(4,0){};
        \node[PathNode](6)at(5,1){};
        \node[PathNode](7)at(6,0){};
        \draw[PathStep](0,0)--(1);
        \draw[PathStep](3,0)--(4);
        \draw[PathStep](5,0)--(6);
        \draw[PathDiag](1)--(2);
        \draw[PathDiag](4)--(6);
        \draw[PathDiag](6)--(7);
    \end{tikzpicture}
        };
    \draw[EdgeGraph, ->, shorten >= 4mm](a)--(b);
    \end{tikzpicture}}
\caption{A binary tree and the associated Tamari diagram of the same size.}
    \label{fig:bijectionTD-BT}
\end{figure}
\smallbreak

Let $n \geq 0$ and $u$ be a Tamari diagram, and $v$ be a dual Tamari diagram, both of size $n$. The diagrams $u$ and $v$ are \Def{compatible} if there are no $i,j$ with $1\leq i < j\leq n$ such that $u_i \geq j-i$ and $v_j \geq j-i$.
If $u$ and $v$ are compatible, then the pair $(u,v)$ is called \Def{Tamari interval diagram}.
The set of Tamari interval diagrams of size $n$ is denoted by $\dit$. 
\smallbreak

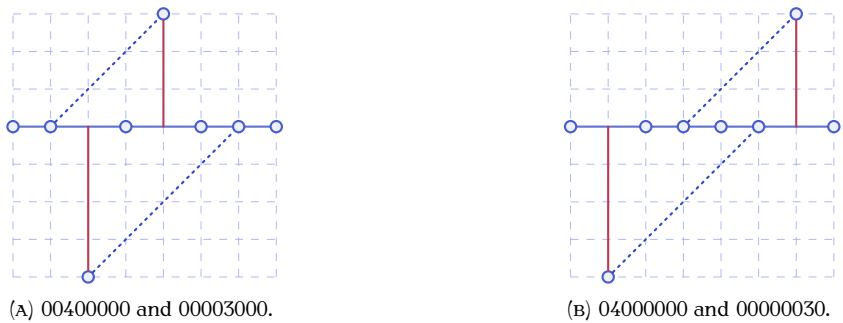
\begin{figure}[ht]
    \centering
    \begin{minipage}{.45\textwidth}
    \centering
    \scalebox{1}{
    \begin{tikzpicture}[scale=.5,Centering]
        \draw[Grid](0,0)grid(7,7);
        \draw[PathStep, draw=ColAB!70](0,4)--(7,4);
        \node[PathNode](1)at(0,4){};
        \node[PathNode](2)at(1,4){};
        \node[PathNode](3)at(2,0){};
        \node[PathNode](33)at(2,4){};
        \node[PathNode](4)at(3,4){};
        \node[PathNode](5)at(4,7){};
        \node[PathNode](55)at(4,4){};
        \node[PathNode](6)at(5,4){};
        \node[PathNode](7)at(6,4){};
        \node[PathNode](8)at(7,4){};
        \draw[PathStep](33)--(3);
        \draw[PathStep](55)--(5);
        \draw[PathDiag](2)--(5);
        \draw[PathDiag](3)--(7);
    \end{tikzpicture}}
\end{minipage}
    \label{subfig:tamari-int-diag-wrong}
    \qquad
    \begin{minipage}{.45\textwidth}
    \centering
    \scalebox{1}{
    \begin{tikzpicture}[scale=.5,Centering]
        \draw[Grid](0,0)grid(7,7);
        \draw[PathStep, draw=ColAB!70](0,4)--(7,4);
        \node[PathNode](1)at(0,4){};
        \node[PathNode](2)at(1,0){};
        \node[PathNode](22)at(1,4){};
        \node[PathNode](3)at(2,4){};
        \node[PathNode](4)at(3,4){};
        \node[PathNode](5)at(4,4){};
        \node[PathNode](6)at(5,4){};
        \node[PathNode](7)at(6,7){};
        \node[PathNode](77)at(6,4){};
        \node[PathNode](8)at(7,4){};
        \draw[PathStep](22)--(2);
        \draw[PathStep](77)--(7);
        \draw[PathDiag](2)--(6);
        \draw[PathDiag](4)--(7);
    \end{tikzpicture}}
\end{minipage}
    \label{subfig:tamari-int-diag-good}
    \caption{Two incompatible diagrams (on the left) and two compatible diagrams (on the right).}
    \label{fig:example-tamari-interval-diag}
\end{figure}
\smallbreak

In other words, a Tamari diagram $u$ of size $n$ and a dual Tamari diagram $v$ of size $n$ are compatible if for any needle of position $i$ and height $v_i\neq 0$ in $v$ (resp.\ $u_i\neq 0$ in $u$), there is no needle of position $j$ and height greater than or equal to $i-j$ in $u$ (resp.\ $j-i$ in $v$) with $i-v_i\leq j\leq i-1$ (resp.\ $i+1 \leq j\leq i + u_i$) and $i\in[n]$. 
\smallbreak

For example, the two diagrams in Figure~\ref{dtdtd} are compatible. Figure~\ref{fig:example-tamari-interval-diag} gives two other examples of two incompatible diagrams $00400000$ and $00003000$, and two compatible diagrams $04000000$ and $00000030$. Hereinafter, if $u$ and $v$ are compatible, we can also say that $u$ and $v$ satisfy the compatibility condition.
\smallbreak

\begin{figure}[ht]
    \centering
    \begin{minipage}{.45\textwidth}
    \centering
    \scalebox{1}{
\begin{tikzpicture}[scale=.5,Centering]
        \draw[Grid](0,0)grid(9,13);
        \draw[PathStep, draw=ColAB!70](0,9)--(9,9);
        \node[PathNode](1)at(0,0){};
        \node[PathNode](2)at(1,9){};
        \node[PathNode](33)at(2,7){};
        \node[PathNode](3)at(2,10){};
        \node[PathNode](4)at(3,8){};
        \node[PathNode](5)at(4,9){};
        \node[PathNode](66)at(5,5){};
        \node[PathNode](6)at(5,13){};
        \node[PathNode](7)at(6,6){};
        \node[PathNode](8)at(7,8){};
        \node[PathNode](9)at(8,9){};
        \node[PathNode](10)at(9,11){};
        \node[PathNode](11)at(0,9){};
        \node[PathNode](44)at(3,9){};
        \node[PathNode](77)at(6,9){};
        \node[PathNode](88)at(7,9){};
        \node[PathNode](1010)at(9,9){};
        \draw[PathStep](2,9)--(3);
        \draw[PathStep](5,9)--(6);
        \draw[PathStep](1010)--(10);
        \draw[PathStep](11)--(1);
        \draw[PathStep](2,9)--(33);
        \draw[PathStep](5,9)--(66);
        \draw[PathStep](77)--(7);
        \draw[PathStep](88)--(8);
        \draw[PathStep](44)--(4);
        \draw[PathDiag](2)--(3);
        \draw[PathDiag](3)--(6);
        \draw[PathDiag](7,9)--(10);
        \draw[PathDiag](1)--(66);
        \draw[PathDiag](66)--(7);
        \draw[PathDiag](7)--(9,9);
        \draw[PathDiag](33)--(4);
        \draw[PathDiag](4)--(5);
        \draw[PathDiag](8)--(9);
    \end{tikzpicture}}
\end{minipage}
    \label{subfig:tamari-interval-diag-bijection}
    \qquad
    \begin{minipage}{.45\textwidth}
    \centering
    \scalebox{.7}{
\begin{tikzpicture}
\draw (-8.27,5.8) node[anchor=north west] {$x_1$};
\draw (-7.27,5.8) node[anchor=north west] {$x_2$};
\draw (-6.27,5.8) node[anchor=north west] {$x_3$};
\draw (-5.27,5.8) node[anchor=north west] {$x_4$};
\draw (-4.27,5.8) node[anchor=north west] {$x_5$};
\draw (-3.27,5.8) node[anchor=north west] {$x_6$};
\draw (-2.27,5.8) node[anchor=north west] {$x_7$};
\draw (-1.27,5.8) node[anchor=north west] {$x_8$};
\draw (-0.27,5.8) node[anchor=north west] {$x_9$};
\draw (0.73,5.8) node[anchor=north west] {$x_{10}$};
\draw [shift={(-6.5,6.052181818181818)},line width=1pt,color=ColBB!80]  plot[domain=-0.10398719188036498:3.245579845470158,variable=\t]({1*0.5027155678400654*cos(\t r)+0*0.5027155678400654*sin(\t r)},{0*0.5027155678400654*cos(\t r)+1*0.5027155678400654*sin(\t r)});
\draw [shift={(-5,6)},line width=1pt,color=ColBB!80]  plot[domain=0:3.141592653589793,variable=\t]({1*2*cos(\t r)+0*2*sin(\t r)},{0*2*cos(\t r)+1*2*sin(\t r)});
\draw [shift={(0,6)},line width=1pt,color=ColBB!80]  plot[domain=0:3.141592653589793,variable=\t]({1*1*cos(\t r)+0*1*sin(\t r)},{0*1*cos(\t r)+1*1*sin(\t r)});
\draw [shift={(-3.5,5.565358361774745)},line width=1pt,color=ColBB!80]  plot[domain=3.2665730184936757:6.158204942275703,variable=\t]({1*4.535375406427633*cos(\t r)+0*4.535375406427633*sin(\t r)},{0*4.535375406427633*cos(\t r)+1*4.535375406427633*sin(\t r)});
\draw [shift={(-1,5.290127388535032)},line width=1pt,color=ColBB!80]  plot[domain=3.285651457406168:6.139126503363212,variable=\t]({1*2.0209339181621346*cos(\t r)+0*2.0209339181621346*sin(\t r)},{0*2.0209339181621346*cos(\t r)+1*2.0209339181621346*sin(\t r)});
\draw [shift={(-0.5,5.246883116883118)},line width=1pt,color=ColBB!80]  plot[domain=3.3047188902568587:6.120059070512521,variable=\t]({1*1.5201813291189716*cos(\t r)+0*1.5201813291189716*sin(\t r)},{0*1.5201813291189716*cos(\t r)+1*1.5201813291189716*sin(\t r)});
\draw [shift={(-0.5,4.970754716981132)},line width=1pt,color=ColBB!80]  plot[domain=-3.200016654616556:0.05842400102676268,variable=\t]({1*0.5008545563123628*cos(\t r)+0*0.5008545563123628*sin(\t r)},{0*0.5008545563123628*cos(\t r)+1*0.5008545563123628*sin(\t r)});
\draw [shift={(-5,5)},line width=1pt,color=ColBB!80]  plot[domain=-3.141592653589793:0,variable=\t]({1*1*cos(\t r)+0*1*sin(\t r)},{0*1*cos(\t r)+1*1*sin(\t r)});
\draw [shift={(-4.5,5.028297872340427)},line width=1pt,color=ColBB!80]  plot[domain=3.1981280872692808:6.226649873500098,variable=\t]({1*0.5008001293719831*cos(\t r)+0*0.5008001293719831*sin(\t r)},{0*0.5008001293719831*cos(\t r)+1*0.5008001293719831*sin(\t r)});
\begin{scriptsize}
\draw [fill=ColBB!20,shift={(-6,6)},rotate=180] (0,0) ++(0 pt,3pt) -- ++(2.598076211353316pt,-4.5pt)--++(-5.196152422706632pt,0 pt) -- ++(2.598076211353316pt,4.5pt);
\draw [fill=ColBB!20,shift={(-3,6)},rotate=180] (0,0) ++(0 pt,3pt) -- ++(2.598076211353316pt,-4.5pt)--++(-5.196152422706632pt,0 pt) -- ++(2.598076211353316pt,4.5pt);
\draw [fill=ColBB!20,shift={(1,6)},rotate=180] (0,0) ++(0 pt,3pt) -- ++(2.598076211353316pt,-4.5pt)--++(-5.196152422706632pt,0 pt) -- ++(2.598076211353316pt,4.5pt);
\draw [fill=ColBB!20,shift={(-8,5)}] (0,0) ++(0 pt,3pt) -- ++(2.598076211353316pt,-4.5pt)--++(-5.196152422706632pt,0 pt) -- ++(2.598076211353316pt,4.5pt);
\draw [fill=ColBB!20,shift={(-3,5)}] (0,0) ++(0 pt,3pt) -- ++(2.598076211353316pt,-4.5pt)--++(-5.196152422706632pt,0 pt) -- ++(2.598076211353316pt,4.5pt);
\draw [fill=ColBB!20,shift={(-2,5)}] (0,0) ++(0 pt,3pt) -- ++(2.598076211353316pt,-4.5pt)--++(-5.196152422706632pt,0 pt) -- ++(2.598076211353316pt,4.5pt);
\draw [fill=ColBB!20,shift={(-1,5)}] (0,0) ++(0 pt,3pt) -- ++(2.598076211353316pt,-4.5pt)--++(-5.196152422706632pt,0 pt) -- ++(2.598076211353316pt,4.5pt);
\draw [fill=ColBB!20,shift={(-6,5)}] (0,0) ++(0 pt,3pt) -- ++(2.598076211353316pt,-4.5pt)--++(-5.196152422706632pt,0 pt) -- ++(2.598076211353316pt,4.5pt);
\draw [fill=ColBB!20,shift={(-5,5)}] (0,0) ++(0 pt,3pt) -- ++(2.598076211353316pt,-4.5pt)--++(-5.196152422706632pt,0 pt) -- ++(2.598076211353316pt,4.5pt);
\end{scriptsize}
\end{tikzpicture}}
\end{minipage}
    \label{subfig:interval-poset-bijection}
    \caption{A Tamari interval diagram of size $10$ (on the left) and its associated interval-poset (on the right).}
\label{dit}
\end{figure}
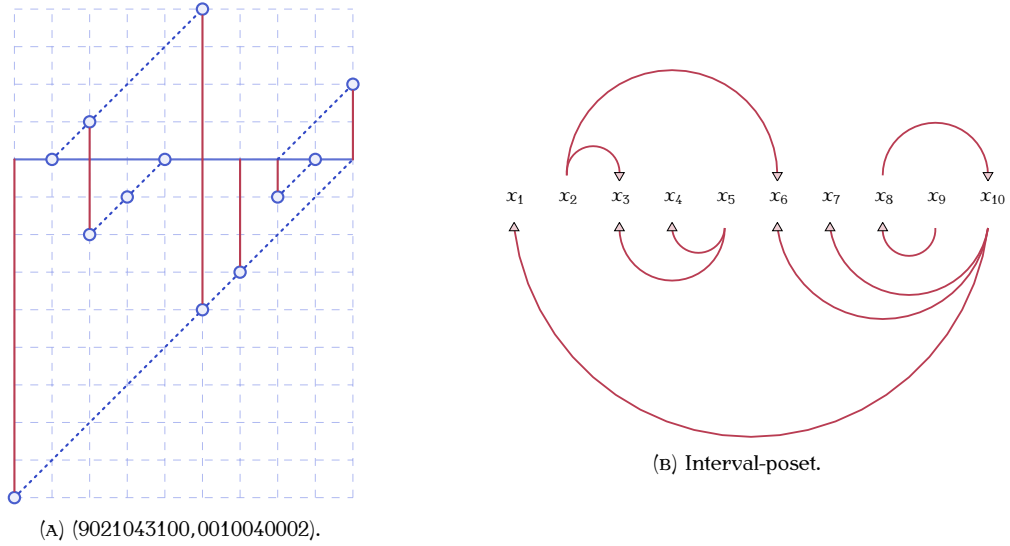
\smallbreak

As for Tamari diagrams and dual Tamari diagrams, a graphical representation of the Tamari interval diagram is also possible, as shown in Figure~\ref{fig:example-tamari-interval-diag}.
Figure~\ref{dit} gives the representation of the Tamari interval diagram $(9021043100, 0010040002)$ formed by the two diagrams seen in Figure~\ref{dtdtd} which are compatible, where we have simply considered the symmetry relative to the abscissa axis of the Tamari diagram, and placed it under the dual Tamari diagram. Thus, the Tamari diagram $u$ is drawn below and the dual Tamari diagram $v$ is drawn above. With such a representation, it is then easy to verify that $u$ and $v$ are compatible.
Indeed, any needle of $u$ that is below the diagonal linking the top of the needle in position $j$ in $v$ to the abscissa point $j-v_j$, has a diagonal that intersects the $x$-axis strictly before the position $j$.
Symmetrically, any needles of $v$ that is above a diagonal linking the top of the needle in position $i$ in $u$ to the abscissa point $i + u_i$, has a diagonal that intersects the $x$-axis strictly after the position $i$.
\smallbreak

One consequence of the compatibility condition is that each needle of non-zero height in the dual Tamari diagram $v$ is always preceded by a needle of $u$ of zero height. Symmetrically, each non-zero height needle in the Tamari diagram $u$ is always followed by a needle of $v$ of zero height. In other words, for any $i\in [n]$, $u_i$ and $v_{i+1}$ can both be zero, but cannot both be non-zero.
\smallbreak

\subsection{Link with interval-posets}\label{subsecIP}
Let us show that there is a bijection between the set of Tamari interval diagrams and the set of interval-posets of the same size.
\smallbreak

Let $n\geq 0$ and $\chi$ be the map sending a Tamari interval diagram $(u,v)$ of size $n$ to the relation
\begin{equation}
\begin{split} 
    \left(\{\IntervalPoset_{1},\dots, \IntervalPoset_{n}\}, 
    \lhd
    \right)
\end{split}
\end{equation}
where $\IntervalPoset_{i+l}\lhd \IntervalPoset_{i}$ for all $i\in [n]$ and $0\leq l\leq u_{i}$,
and $\IntervalPoset_{i-k}\lhd \IntervalPoset_{i}$ for all $i\in [n]$ and $0\leq k\leq v_{i}$.
\smallbreak

\begin{Proposition}
For any $n \geq 0$, the map $\chi$ has values in $\SetIntervalPoset(n)$.
\end{Proposition}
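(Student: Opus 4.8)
The plan is to verify directly that the relation $\lhd := \chi(u,v)$ is reflexive, antisymmetric and transitive, and then that it satisfies the two interval-poset properties \ref{pip1} and \ref{pip2}. Reflexivity is immediate: taking $l=0$ (equivalently $k=0$) in the definition gives $x_i \lhd x_i$ for every $i$. Before anything else I would record the elementary dictionary for the generators: since the $u$-clauses produce only relations $x_{i+l}\lhd x_i$ (source index at least the goal's) and the $v$-clauses only relations $x_{i-k}\lhd x_i$ (source index at most the goal's), a pair $x_a \lhd x_b$ belongs to $\lhd$ precisely when $a=b$, or $a>b$ with $a-b\le u_b$ (a \emph{decreasing} relation, forced to come from $u$), or $a<b$ with $b-a\le v_b$ (an \emph{increasing} relation, forced to come from $v$). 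With this in hand, properties \ref{pip1} and \ref{pip2} are essentially automatic: if $i<k$ and $x_k\lhd x_i$ then $k-i\le u_i$, so any $j$ with $i<j<k$ satisfies $0<j-i<k-i\le u_i$ and hence $x_j\lhd x_i$; property \ref{pip2} is the same argument with $v_k$ in place of $u_i$.

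For antisymmetry, suppose $x_a\lhd x_b$ and $x_b\lhd x_a$ with $a\neq b$; after relabelling assume $a<b$. Then the two relations force $v_b\ge b-a$ and $u_a\ge b-a$ respectively, and the compatibility condition applied to the pair $(i,j)=(a,b)$ gives $v_b<b-a$, a contradiction. Transitivity is the main point, and I would treat it by cases on the types of $x_a\lhd x_b$ and $x_b\lhd x_c$, the reflexive cases being trivial. If both are decreasing, then $b<a\le b+u_b$ and $c<b\le c+u_c$; writing $b=c+j$ with $1\le j\le u_c$, condition~\ref{condDT2} gives $u_b=u_{c+j}\le u_c-j$, whence $a\le b+u_b\le c+u_c$ and $x_a\lhd x_c$. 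The case of two increasing relations is symmetric via condition~\ref{condDTD2}.

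The two mixed cases are where compatibility is genuinely used, and I expect them to be the only delicate step. Suppose $x_a\lhd x_b$ is decreasing and $x_b\lhd x_c$ is increasing, so $b<a\le b+u_b$ and $c-v_c\le b<c$. Since $v_c\ge c-b$, compatibility at $(i,j)=(b,c)$ rules out $u_b\ge c-b$, hence $u_b\le c-b-1$ and $a\le b+u_b\le c-1$; combined with $a>b\ge c-v_c$ this gives $c-v_c\le a<c$, i.e.\ $0<c-a\le v_c$, so $x_a\lhd x_c$. The remaining case, $x_a\lhd x_b$ increasing and $x_b\lhd x_c$ decreasing, is handled identically with the roles of $u$ and $v$ swapped: from $a<b\le c+u_c$ and $b-v_b\le a$, compatibility at $(i,j)=(c,b)$ forces $v_b\le b-c-1$, hence $a\ge b-v_b\ge c+1$ and $a\le c+u_c-1$, giving $0<a-c\le u_c$ and $x_a\lhd x_c$. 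Once all four transitivity cases are settled, $\lhd$ is a partial order satisfying \ref{pip1} and \ref{pip2}, so $\chi(u,v)\in\ip$; the whole argument is just bookkeeping over the three monotonicity statements \ref{condDT2}, \ref{condDTD2} and compatibility, with the mixed transitivity cases being the crux.
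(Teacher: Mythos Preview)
Your proof is correct and follows essentially the same approach as the paper: reflexivity from $l=k=0$, antisymmetry from compatibility, the interval-poset properties directly from the definition of $\chi$, and transitivity by cases using conditions~\ref{condDT2}, \ref{condDTD2}, and compatibility. The only organizational difference is in the mixed transitivity cases: the paper fixes $i<j<k$ and separates each mixed case into a ``possible'' sub-case (handled via the interval-poset-like property) and an ``impossible'' sub-case (ruled out by compatibility), whereas you treat each mixed case uniformly by using compatibility to force the relative order of the endpoints and then reading off the conclusion---this is a minor streamlining, not a different argument.
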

\smallbreak

\begin{proof}
Let $(u,v) \in\dit$ and $\IntervalPoset := \chi(u,v)$. First, we show that $\lhd$ is a partial order, then that interval-poset properties are satisfied.
\smallbreak

\begin{enumerate}[label=(\arabic*)]
\item By the definition of $\chi$ one has $\IntervalPoset_{i+ l}\lhd \IntervalPoset_{i}$ and $\IntervalPoset_{i-k}\lhd \IntervalPoset_{i}$ with $0\leq l\leq u_{i}$ and $0\leq k\leq v_{i}$ for all $\IntervalPoset_{i}\in \IntervalPoset$. Specifically, $\IntervalPoset_{i}\lhd \IntervalPoset_{i}$. This shows that $\IntervalPoset$ is reflexive. 

\item Let $\IntervalPoset_{i}$, $\IntervalPoset_{j}$ and $\IntervalPoset_{k}$ be vertices of $\IntervalPoset$ with $i < j < k$.
\begin{enumerate}
\item Suppose that $\IntervalPoset_{j}\lhd \IntervalPoset_{i}$ and that $\IntervalPoset_{k}
\lhd \IntervalPoset_{j}$. Then $\IntervalPoset_{j}\lhd 
\IntervalPoset_{i}$ implies that there is an integer $0\leq i'\leq 
u_{i}$ such that $j= i + i'$. Therefore, by Condition~\ref{condDT2+} of a Tamari diagram, $u_{j} = u_{i+i'} \leq u_{i} - i'$.
Likewise, $\IntervalPoset_{k}\lhd \IntervalPoset_{j}$ 
implies that there is an integer $0\leq j'\leq v_{j}$  
such that $k = j+j'$. Still by the same condition, one has $u_{k} = u_{j+j'}\leq u_{j} - j'$.
By using these two inequalities, we obtain that $u_{i}\geq u_{k} + i' +j'$. Since $i'+j' = k-i$, then we have $u_{i}\geq k-i$, which implies by the definition of $\chi$ that $\IntervalPoset_{k}\lhd \IntervalPoset_{i}$ in $\IntervalPoset$.

\item Suppose that $\IntervalPoset_{j}\lhd \IntervalPoset_{i}$ and that $\IntervalPoset_{i}
\lhd \IntervalPoset_{k}$. Therefore, $\IntervalPoset_{j}\lhd \IntervalPoset_{k}$ because $\IntervalPoset_{i}
\lhd \IntervalPoset_{k}$ implies that each vertex between $\IntervalPoset_i$ and $\IntervalPoset_k$ is in relation with $\IntervalPoset_k$.

\item Suppose that $\IntervalPoset_{i}\lhd \IntervalPoset_{j}$ and that $\IntervalPoset_{j}\lhd \IntervalPoset_{k}$. Then $\IntervalPoset_{i}\lhd \IntervalPoset_{j}$ implies that there is an integer $0\leq i'\leq v_{i}$ such that $i=j-i'$. By Condition~\ref{condDTD2+} of a dual Tamari diagram, $v_{i}= v_{j-i'} \leq v_{j} - i'$. Likewise, $\IntervalPoset_{j}\lhd \IntervalPoset_{k}$ implies that there is an integer $0\leq j'\leq v_{j}$ such that $j = k-j'$. By the same condition~\ref{condDTD2+}, $v_{j} = v_{k-j'}\leq v_{k} - j'$.
By these two inequalities, one has $v_{k}\geq v_{i} + i' +j'$. Since $i'+j' = k-i$, one has $v_{k}\geq k-i$, which implies by the definition of $\chi$ that $\IntervalPoset_{i}\lhd \IntervalPoset_{k}$ in $\IntervalPoset$.

\item Suppose that $\IntervalPoset_{j}\lhd \IntervalPoset_{k}$ and that $\IntervalPoset_{k}\lhd \IntervalPoset_{i}$. Then $\IntervalPoset_{j}\lhd \IntervalPoset_{i}$ because $\IntervalPoset_{k}\lhd \IntervalPoset_{i}$ implies that all vertex between $\IntervalPoset_i$ and $\IntervalPoset_k$ is in relation with $\IntervalPoset_i$.

\end{enumerate}
This shows that $\IntervalPoset$ is transitive.
Notice that it is impossible to have the case $\IntervalPoset_{i}\lhd \IntervalPoset_{k}$ and $\IntervalPoset_{k}\lhd \IntervalPoset_{j}$ since $\IntervalPoset$ is the image of a Tamari interval diagram. Getting this case would contradict the fact that $u$ and $v$ are compatible. Similarly, the case $\IntervalPoset_{i}\lhd \IntervalPoset_{j}$ and $\IntervalPoset_{k}\lhd \IntervalPoset_{i}$ is impossible.

\item Let $i<j$ and $\IntervalPoset_i$, $\IntervalPoset_j$ be vertices of $\IntervalPoset$. Suppose that $\IntervalPoset_j\lhd \IntervalPoset_i$ and that $\IntervalPoset_i \lhd \IntervalPoset_j$. By the definition of $\chi$, $\IntervalPoset_j\lhd \IntervalPoset_i$ if and only if $u_i\geq j-i$. Likewise, $\IntervalPoset_i \lhd \IntervalPoset_j$ if and only if $v_j \geq j-i$. However, since $u$ and $v$ are compatible, this case is impossible. This shows that $\IntervalPoset$ is antisymmetric.

\item The definition of $\chi$ implies directly that $\IntervalPoset$ satisfies the interval-poset properties, 
namely that for all $\IntervalPoset_{i}$, $\IntervalPoset_{j}$ and $\IntervalPoset_{k}$ vertices of $\IntervalPoset$ with $i < j < k$, if $\IntervalPoset_{k}\lhd \IntervalPoset_{i}$, then $\IntervalPoset_{j}\lhd \IntervalPoset_{i}$, and if $\IntervalPoset_{i}\lhd \IntervalPoset_{k}$, then $\IntervalPoset_{j}\lhd \IntervalPoset_{k}$.
\end{enumerate}
\end{proof}
\smallbreak

Let $n \geq 0$ and $\chi'$ be the map sending an interval-poset $\IntervalPoset$ of size $n$ on a pair of words $(u,v) \in \mathbb{N}^n\times\mathbb{N}^n$, such that for all $i \in [n]$, 
\begin{align}
\label{réciproque} u_i := \# \{\IntervalPoset_j\in \IntervalPoset : \IntervalPoset_j\lhd \IntervalPoset_i \mbox{ and } i<j\}; \\
\label{réciproque2} v_j := \# \{\IntervalPoset_i\in \IntervalPoset : \IntervalPoset_i\lhd \IntervalPoset_j \mbox{ and } i<j\}.
\end{align}
\smallbreak

\begin{Lemma}\label{upivpi}
Let $n \geq 0$, $\IntervalPoset \in \SetIntervalPoset(n)$ and $(u,v) := \chi'(\IntervalPoset)$. If $u_i \geq j - i$ (resp. $v_j \geq j - i$), then $\IntervalPoset_j \lhd \IntervalPoset_i$ (resp. $\IntervalPoset_i \lhd \IntervalPoset_j$), with $0 \leq i \leq j \leq n$.
\end{Lemma}

\begin{proof}
According to~\eqref{réciproque}, the fact that $u_i \geq j - i$ means that there are at least $j - i$ vertices in decreasing relation to the vertex $\IntervalPoset_i$. By the point~\ref{pip1} of interval-poset properties, this implies in particular that $\IntervalPoset_{j}\lhd \IntervalPoset_i$.
Respectively, we show with the point~\ref{pip2} of interval-poset properties that $v_j \geq j - i$ implies that $\IntervalPoset_i \lhd \IntervalPoset_j$.
\end{proof}

\begin{Theorem}\label{bijDIT}
For any $n \geq 0$, the map $\chi : \dit \rightarrow \SetIntervalPoset(n)$ is bijective.
\end{Theorem}
\smallbreak

\begin{proof}
Let us show that $\chi'$ is the inverse map of $\chi$. Let $n \geq 0$, $\IntervalPoset \in \SetIntervalPoset(n)$ and $(u,v) := \chi'(\IntervalPoset)$.
\smallbreak

\begin{enumerate}[label=(\arabic*)]
\item Since $\IntervalPoset$ is an interval-poset, there are at most $n-i$ vertices of $\IntervalPoset$ in decreasing relation to $\IntervalPoset_i$ and at most $i-1$ vertices of $\IntervalPoset$ in increasing relation to $\IntervalPoset_i$ for all $i\in [n]$. Therefore, the word $u$ satisfies Condition~\ref{condDTD1+} of a Tamari diagram and the word $v$ satisfies Condition~\ref{condDTD1+} of a dual Tamari diagram.

\item Let $\IntervalPoset_i$ and $\IntervalPoset_{i+j}$ be vertices of $\IntervalPoset$ such that $i\in [n]$ and $j\in [0, u_i]$. By Lemma~\ref{upivpi}, the fact that $u_i \geq j$ means that $\IntervalPoset_{i+j}\lhd \IntervalPoset_i$. Thus, by transitivity of interval-posets, one has that for any $i+j \leq k \leq n$, if $\IntervalPoset_k\lhd \IntervalPoset_{i+j}$, then $\IntervalPoset_k\lhd \IntervalPoset_i$. Thus, $u_{i+j} +j \leq u_i$, which implies Condition~\ref{condDT2+} of a Tamari diagram.

\noindent
Symmetrically, Condition~\ref{condDTD2+} of a dual Tamari diagram is checked by considering $\IntervalPoset_i$ and $\IntervalPoset_{i-j}$ vertices of $\IntervalPoset$ such that $i\in [n]$ and $j\in [0, v_i]$.

\item For all $i,j$ such that $1\leq i<j\leq n$ and $u_i\geq j-i$, suppose that $v_j\geq j-i$. By Lemma~\ref{upivpi}, the relation $u_i\geq j-i$ implies that $\IntervalPoset_j\lhd \IntervalPoset_i$. Likewise, the relation $v_j\geq j-i$ means that $\IntervalPoset_i\lhd \IntervalPoset_j$. Both of these implications lead to a contradiction with the antisymmetric nature of interval-posets. Necessarily, we have $v_j< j-i$, which implies that $u$ and $v$ are compatible.
\end{enumerate}
\smallbreak

The pair $(u,v)$ is a Tamari interval diagram of size $n$. 
Finally, it is clear that $\chi(u,v) = \IntervalPoset$ by construction.
Therefore, the map $\chi$ is surjective.
\smallbreak

Let $(u,v)$ and $(u',v')$ be two Tamari interval diagrams of size $n$, such that $(u,v)\neq (u',v')$ and such that $\chi(u,v):= \IntervalPoset$ and $\chi(u',v') := \IntervalPoset'$. So there is at least one letter of $(u,v)$ and $(u',v')$ such that $u_i\neq u'_i$ or $v_i\neq v'_i$, for $i \in [n]$. Therefore, the number of vertices of $\IntervalPoset$ in relation to the vertex $\IntervalPoset_i$ associated with the component $u_i$ and $v_i$ by $\chi$ is different from the number of vertices of $\IntervalPoset'$ in relation to the vertex $\IntervalPoset'_i$ associated with the component $u'_i$ and $v'_i$ by $\chi$, we thus have $\IntervalPoset\neq \IntervalPoset'$. This shows that the map $\chi$ is injective.
\end{proof}
\smallbreak

The minimalist representation of the interval-posets defined in Section~\ref{Sec:préliminaires} allows us to describe a direct construction of the corresponding Tamari interval diagram.
Indeed, let us consider the minimalist representation of an interval-poset $\IntervalPoset$ of size $n$. For any relation $\IntervalPoset_j\lhd \IntervalPoset_i$ (resp.\ $\IntervalPoset_i\lhd \IntervalPoset_j$) drawn, with $1\leq i<j \leq n$, we set $u_i := j-i$ (resp.\ $v_j := j-i$) and all other elements not involved in any relation to $0$. This forms a pair of words $(u,v)$ which is the inverse image of $\IntervalPoset$ by $\chi$.
\smallbreak

An example is given by Figure~\ref{dit}, where a Tamari interval diagram and its interval-poset which is its image by $\chi$ are shown.
\smallbreak


\subsection{Cubic coordinates}\label{subsecCC}
We describe in this part the set of cubic coordinates, and we show that there is a bijection between this set and the set of Tamari interval diagrams.
\smallbreak

An $(n-1)$-tuple $c$ on $\mathbb{Z}$ is a \Def{cubic coordinate} if there is a Tamari interval diagram $(u,v)$ of size $n$ such that
\begin{equation}
c = (u_1-v_2, u_2-v_3,\dots, u_{n-1}-v_n).
\end{equation}
The size of a cubic coordinate is its number of components plus one. 
The set of cubic coordinates of size $n$ is denoted by $\cc$.
For instance, $(9,-1,2,1,-4,4,3,1,-2)$ is a cubic coordinate of size $10$ since there is the Tamari interval diagram $(9021043100, 0010040002)$ satisfying the conditions of the definition.
\smallbreak

Besides, for any $n \geq 1$, let $\phi$ be the map sending an $(n-1)$-tuple $c$ on $\mathbb{Z}$ to a pair $(u,v)$ of words on $\mathbb{N}$, both of length $n$, such that $u$ satisfies $u_n = 0$ and for any $i\in[n-1]$,
\begin{equation}
    u_i = \max (c_i,~0),
\end{equation}
and $v$ satisfies $v_1 = 0$ and for any $2\leq i\leq n$,
\begin{equation}
    v_i = |\min (c_{i-1},~0)|.
\end{equation} 
\smallbreak

\begin{Theorem}\label{bijDIT-CC}
For any $n\geq 0$, the map $\phi : \cc \rightarrow \dit$ is bijective.
\end{Theorem}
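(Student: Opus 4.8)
The plan is to exhibit an explicit inverse to $\phi$ and verify that the two maps compose to the identity on each side. Define $\psi \colon \dit \to \mathbb{Z}^{n-1}$ by
$\psi(u,v) := (u_1 - v_2,\, u_2 - v_3,\, \dots,\, u_{n-1} - v_n)$,
i.e. $\psi(u,v)_i = u_i - v_{i+1}$ for $i \in [n-1]$; this is precisely the tuple attached to $(u,v)$ at the start of this subsection. The claim is that $\psi$ takes values in $\cc$ and that $\phi$ and $\psi$ are mutually inverse, which immediately gives the theorem. Note that $\phi(c) \in \dit$ for $c \in \cc$ is part of the very definition of $\cc$, so $\phi$ is at least well defined.

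First I would record the purely arithmetic identity $\max(x,0) - |\min(x,0)| = x$, valid for every real $x$. Applied to $x = c_i$, it shows that for any $c \in \cc$ with $\phi(c) = (u,v)$ one has $\psi(\phi(c))_i = u_i - v_{i+1} = \max(c_i,0) - |\min(c_i,0)| = c_i$ for all $i \in [n-1]$. Hence $\psi \circ \phi = \mathrm{id}_{\cc}$; in particular $\phi$ is injective. This step needs no combinatorics at all.

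Next I would show that $\psi(u,v) \in \cc$ and $\phi(\psi(u,v)) = (u,v)$ for every $(u,v) \in \dit$. Write $c := \psi(u,v)$ and let $(u',v')$ be the pair produced from $c$ by the defining recipe of a cubic coordinate: $u'_n := 0$, $u'_i := \max(c_i,0)$ for $i \in [n-1]$, $v'_1 := 0$, $v'_i := |\min(c_{i-1},0)|$ for $2 \le i \le n$. I check $(u',v') = (u,v)$ entrywise. For $u'_n$ and $v'_1$ this is immediate, since the last letter of a Tamari diagram and the first letter of a dual Tamari diagram are always $0$. For the remaining entries the key input is the consequence of the compatibility condition recalled above: for every $i \in [n-1]$, the values $u_i$ and $v_{i+1}$ cannot both be non-zero. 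So if $u_i > 0$ then $v_{i+1} = 0$, whence $c_i = u_i > 0$ and $u'_i = \max(c_i,0) = u_i$; and if $u_i = 0$ then $c_i = -v_{i+1} \le 0$, so $u'_i = 0 = u_i$. Symmetrically, using that $u_{i-1}$ and $v_i$ are never simultaneously non-zero, $v'_i = |\min(c_{i-1},0)| = |\min(u_{i-1} - v_i,\,0)| = v_i$. Therefore $(u',v') = (u,v) \in \dit$, which by definition means $c \in \cc$, and moreover $\phi(c) = (u',v') = (u,v)$, so $\phi \circ \psi = \mathrm{id}_{\dit}$. Combining the two steps, $\phi$ is a bijection with inverse $\psi$.

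I do not expect a real obstacle: the only non-formal ingredient is the already-established fact that a Tamari interval diagram never has $u_i$ and $v_{i+1}$ both non-zero, and everything else is bookkeeping with $\max$ and $\min$. If one preferred not to cite that fact as a black box, it follows in one line from the compatibility condition by taking $j = i+1$ (if $u_i \ge 1 = j-i$ then $v_{i+1} < 1$, hence $v_{i+1} = 0$; and contrapositively $v_{i+1} \ge 1$ forces $u_i = 0$), and I would insert that remark for self-containedness.
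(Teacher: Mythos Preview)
Your proof is correct and follows essentially the same approach as the paper: both construct the inverse map $(u,v) \mapsto (u_1 - v_2, \dots, u_{n-1} - v_n)$ and rely on the observation that $u_i$ and $v_{i+1}$ are never simultaneously non-zero (which you even rederive from compatibility with $j = i+1$). Your treatment is somewhat more explicit in verifying both compositions via the identity $\max(x,0) - |\min(x,0)| = x$, whereas the paper argues injectivity and surjectivity separately; as a minor remark, the symbol $\psi$ is used later in the paper for a different map, so you may wish to rename your inverse.
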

\smallbreak

\begin{proof}
Let $c$ and $c'$ be two cubic coordinates of size $n$ such that $c\neq c'$. Then there is a component $c_i$ such that $c_i\neq c'_i$, with $i\in[n-1]$. By the map $\phi$, one has then $u_i\neq u'_i$ or $v_{i+1}\neq v'_{i+1}$, namely $(u,v)\neq (u',v')$. Which shows that the map $\phi$ is injective.
\smallbreak

Let $(u,v)\in\dit$. Let $c := (u_1-v_2, u_2-v_3,\dots, u_{n-1}-v_n)$, the $(n-1)$-tuple whose components are given by the difference between $u_i$ and $v_{i+1}$ for any $i\in[n-1]$. Now if $u_i \neq 0$, then $v_{i+1}= 0$ for any $i\in[n-1]$. Therefore, $\phi(c) = (u,v)$, where $(u,v)$ is indeed a Tamari interval diagram by hypothesis. By the definition of a cubic coordinate, one can conclude that $c\in \cc$. This shows that the map $\phi$ is surjective.
\end{proof}
\smallbreak

Therefore, by the map $\phi$ it is possible to build a cubic coordinate from a Tamari interval diagram and reciprocally. Graphically, we have to shift the upper part of a Tamari interval diagram (corresponding to the dual Tamari diagram) to the left by one position and collect the height of the needles from left to right. Then, we put a positive sign for the needles of the lower part of the Tamari interval diagram (corresponding to the Tamari diagram) and a negative sign for the upper part, and we forget the last needle of zero height. To reconstruct a Tamari interval diagram from a cubic coordinate, we reconstruct the needles of the Tamari diagram and the dual Tamari diagram from the components of the cubic coordinate in the same way, and then we shift the dual Tamari diagram to the right by one position. 
\smallbreak

Using the map $\chi$ we can then directly give the cubic coordinate of an interval-poset $\IntervalPoset$. In the same way that we shift the dual Tamari diagram one position to the left, we shift all the increasing relations of the interval-poset to the left by one vertex. Then, for each vertex $\IntervalPoset_i$, we count the number of elements in increasing or decreasing relation of target $\IntervalPoset_i$, out of reflexive relation, for all $i\in[n-1]$. These numbers become the components of positive sign if it is a decreasing relation, negative otherwise, of the cubic coordinate. As the increasing relations have been shifted, the number associated with the vertex $\IntervalPoset_n$ is always zero. Therefore, this vertex is forgotten for the cubic coordinate. In the same way, to construct an interval-poset from a cubic coordinate with each component of a cubic coordinate, we rebuild the increasing and decreasing relations on $n-1$ vertices, we add the vertex $\IntervalPoset_n$, then we shift the increasing relations to the right.
\smallbreak

\begin{Lemma}\label{lemmeutilepartout}
Let $n \geq 0$ and $c \in\cc$ such that there is a component $c_i \ne 0$, for $i\in[n-1]$. Let $c'$ be the $(n-1)$-tuple such that $c'_i = 0$ and $c'_j = c_j$ for any $j\ne i$, with $j\in[n-1]$. Then $c'$ is a cubic coordinate.
\end{Lemma}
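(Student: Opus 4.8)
The approach is to unwind the definition of a cubic coordinate through the bijection $\phi$ of Theorem~\ref{bijDIT-CC} and check the defining conditions of a Tamari interval diagram directly. Write $(u,v) := \phi(c)$ and $(u',v') := \phi(c')$. The crucial observation is that replacing $c_i$ by $0$ alters $(u,v)$ in one very controlled way: if $c_i>0$, then $u_i = c_i$ and $v_{i+1}=0$, and passing to $c'$ only lowers $u_i$ to $0$ while leaving every other letter of $u$ and all of $v$ unchanged; symmetrically, if $c_i<0$, then $u_i=0$ and $v_{i+1}=-c_i$, and passing to $c'$ only lowers $v_{i+1}$ to $0$, leaving everything else untouched. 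So it suffices to prove that lowering a single letter of a Tamari diagram (resp. dual Tamari diagram) to $0$ again yields a Tamari diagram (resp. dual Tamari diagram), and that compatibility with the other, unchanged, diagram is preserved.

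First I would treat the case $c_i>0$. Condition~\ref{condDT1} for $u'$ is clear since $0 \le u'_i = 0 \le n-i$ and the other letters are unchanged. For condition~\ref{condDT2}, I would verify $u'_{k+j}\le u'_k-j$ separately for $k=i$ (where $u'_i=0$ forces $j=0$ and the inequality is trivial) and for $k\ne i$ (where $u'_k=u_k$, and the only new situation is $k+j=i$, which is handled by $u'_i=0\le u_i\le u_k-j$, the last inequality being the original condition~\ref{condDT2} for $(u,v)$). Hence $u'$ is a Tamari diagram; $v'=v$ is a dual Tamari diagram; and for compatibility I would note that since $u'_k\le u_k$ for all $k$, the set of pairs $(k,l)$ with $u'_k\ge l-k$ is contained in the set with $u_k\ge l-k$, on which compatibility of $(u,v)$ already gives $v_l<l-k$, i.e.\ $v'_l<l-k$. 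Thus $(u',v')\in\dit$. The case $c_i<0$ is entirely symmetric, using conditions~\ref{condDTD1} and~\ref{condDTD2} for $v'$ with the same bookkeeping, and observing that for compatibility either $l=i+1$, where $v'_{i+1}=0<l-k$ automatically (as $l-k\ge 1$), or $l\ne i+1$, where $v'_l=v_l$ and compatibility of $(u,v)$ applies. In both cases $(u',v')$ is a Tamari interval diagram, so by the definition of a cubic coordinate (and Theorem~\ref{bijDIT-CC}) we get $c'\in\cc$.

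I do not expect a genuine obstacle here; the only point requiring care is the index bookkeeping in condition~\ref{condDT2} (resp.\ \ref{condDTD2}) when $k+j$ (resp.\ $k-j$) lands on the modified position, and this is precisely where setting the letter to $0$ — rather than to some intermediate value — makes the needed inequality $0\le(\text{old letter})\le u_k-j$ immediate. It is worth remarking that the same argument shows the more general fact that decreasing any single letter of a Tamari interval diagram (keeping it nonnegative) again produces a Tamari interval diagram; the statement $c'_i=0$ is the special case we will use later.
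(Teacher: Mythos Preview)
Your main argument is correct and follows essentially the same route as the paper's proof: unwind through $\phi$, observe that setting $c'_i=0$ forces $(u'_i,v'_{i+1})=(0,0)$, and verify the three defining conditions of a Tamari interval diagram directly. Your case split on the sign of $c_i$ and your explicit treatment of the subcase $k+j=i$ in condition~\ref{condDT2} are in fact slightly more careful than the paper's terse version, which only checks the constraint \emph{based} at the modified index and leaves the ``$k+j=i$'' case implicit.

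One genuine error, however, is your closing remark that ``the same argument shows the more general fact that decreasing any single letter of a Tamari interval diagram (keeping it nonnegative) again produces a Tamari interval diagram.'' This is false: take $u=(2,1,0)$, which is a Tamari diagram, and decrease $u_1$ to $1$; then $u'=(1,1,0)$ violates condition~\ref{condDT2} at $i=1$, $j=1$, since $u'_2=1\nleq u'_1-1=0$. The point is that when the modified index is the \emph{base} index $k=i$ in condition~\ref{condDT2}, lowering $u_i$ \emph{tightens} the constraint $u_{i+j}\le u_i-j$, and only the extreme choice $u'_i=0$ (which collapses the range of $j$ to $\{0\}$) survives this. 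Your own proof uses exactly this collapse, so the special case is fine; just drop the general claim.
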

\smallbreak

\begin{proof}
Let $(u',v') := \phi(c')$ and $(u'_j, v'_{j+1})$ be the pair of letters corresponding to $c'_j$ by the map $\phi$, with $j \in [n-1]$. 
Since $c'_i = 0$, then $(u'_i, v'_{i+1}) = (0,0)$. By hypothesis, all other pairs of letters are the same as those of $(u, v) := \phi(c)$.
In order to show that $c'$ is a cubic coordinate, we have to show that $(u',v')$ is a Tamari interval diagram, namely that $(u',v')$ satisfies the conditions of a Tamari diagram, of a dual Tamari diagram, and of compatibility. 
Clearly, with  $(u'_i, v'_{i+1}) = (0,0)$, all these conditions are satisfied for $(u',v')$.
\end{proof}
\smallbreak

Depending on the case, either the definition of cubic coordinates or the definition of Tamari interval diagrams is used, as it is done for the proof of Lemma~\ref{lemmeutilepartout}. For example, the following results are stated for Tamari interval diagrams.
\smallbreak

Let $n \geq 0$. A Tamari interval diagram $(u,v)$ of size $n$ is  \Def{synchronized} if either $u_i \ne 0$ or $v_{i+1} \ne 0$ for any $i\in [n-1]$.
\smallbreak

Likewise, a cubic coordinate $c$ of size $n$ is synchronized if $c_i\ne 0$ for any $i\in [n-1]$. The set of synchronized cubic coordinates of size $n$ is denoted by $\ccs$.
\smallbreak

A Tamari interval $(\TreeS,\TreeT)$ is synchronized if and only if the binary trees $\TreeS$ and $\TreeT$ have the same canopy~\cite{FPR17, PRV17}. The definition of the canopy is recalled in Section~\ref{Sec:préliminaires}.
\smallbreak

\begin{Proposition}\label{ditsyn=tisyn}
Let $n \geq 0$ and $(u,v)\in\dit$. The Tamari interval diagram $(u,v)$ is synchronized if and only if $\rho(\chi(u,v))$ is a synchronized Tamari interval. 
\end{Proposition}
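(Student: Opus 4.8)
The plan is to translate both sides of the equivalence into statements about the canopies of $S$ and $T$ and compare them letter by letter, using the classical dictionary between a (dual) Tamari diagram and the canopy of the associated binary tree. Write $P := \chi(u,v)$ and $[S,T] := \rho(\chi(u,v))$; the cases $n \leq 1$ are vacuous, so assume $n \geq 2$.

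\emph{Step one.} Recall that, by the construction of the bijection $\rho$ together with the formulas~\eqref{réciproque} and~\eqref{réciproque2} used in the proof of Theorem~\ref{bijDIT}, the word $u$ is the Tamari diagram of $S$ and the word $v$ is the dual Tamari diagram of $T$. Concretely, $u_i$ is the number of internal nodes in the right subtree of the node $i$ of $S$, and $v_i$ is the number of internal nodes in the left subtree of the node $i$ of $T$, for all $i\in[n]$.

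\emph{Step two (the core lemma).} I would show that, for any binary tree with Tamari diagram $u$, the $i$-th letter of its canopy equals $1$ if and only if $u_i = 0$, for $i\in[n-1]$; and symmetrically, for any binary tree with dual Tamari diagram $v$, the $i$-th letter of its canopy equals $0$ if and only if $v_{i+1}=0$. For the first claim: in the infix traversal, the leaf lying between the node $i$ and the node $i+1$ is the right child of the node $i$ exactly when $i$ has no internal right subtree; otherwise it is the leftmost leaf of that right subtree, hence a left child. So this leaf is labeled $1$ when $u_i=0$ and $0$ otherwise. The second claim is the mirror statement about the node $i+1$ and its internal left subtree. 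Both reduce to a short case analysis of the infix traversal around the relevant node, and this is, I expect, the only genuinely delicate point.

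\emph{Step three (conclusion).} By definition, $[S,T]$ is synchronized if and only if $S$ and $T$ have the same canopy, i.e. if and only if for every $i\in[n-1]$ the $i$-th canopy letters of $S$ and of $T$ coincide. By Step two, coincidence at position $i$ means: either ($u_i\neq 0$ and $v_{i+1}=0$), or ($u_i=0$ and $v_{i+1}\neq 0$) --- that is, exactly one of $u_i,v_{i+1}$ equals $0$. Now invoke the consequence of compatibility already recorded in the text: $u_i$ and $v_{i+1}$ are never simultaneously non-zero. Under that restriction, ``exactly one of $u_i, v_{i+1}$ is zero'' is equivalent to ``$u_i$ and $v_{i+1}$ are not both zero'', i.e. to ``$u_i\neq 0$ or $v_{i+1}\neq 0$'' --- which, holding for all $i\in[n-1]$, is exactly the definition of $(u,v)$ being synchronized. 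This establishes both implications at once. The main obstacle is Step two: pinning down the correspondence between (dual) Tamari diagram entries and canopy letters, and in particular being careful that it is $S$ (not $T$) whose Tamari diagram is $u$ and $T$ (not $S$) whose dual Tamari diagram is $v$, so that the asymmetric index shift ($v_{i+1}$ against $u_i$) comes out right.
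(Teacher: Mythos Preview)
Your proof is correct and follows essentially the same route as the paper: both arguments reduce the question to the correspondence between the canopy letter at position $i$ and whether $u_i=0$ (for $S$) or $v_{i+1}=0$ (for $T$), then invoke compatibility to finish. Your Step~2 states this correspondence cleanly as a lemma, whereas the paper phrases the same content via ``node $i$ has no right child in $S$'' and ``node $i+1$ has no left child in $T$''; the logic is identical.
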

\smallbreak

\begin{proof}
If $(u,v)$ is not synchronized, then there is an index $i\in[n-1]$ such that $u_i = 0$ and $v_{i+1} = 0$. Let $\IntervalPoset := \chi(u,v)$ be the interval-poset associated to $(u,v)$, and $(\TreeS,\TreeT) := \rho(\chi(u,v))$. 
The two binary trees $\TreeS$ and $\TreeT$ are not synchronized if there is at least one letter of some index $j$ in the canopy of the tree $\TreeS$ that is different from the letter of the same index $j$ in the canopy of $\TreeT$. 
Let us show that $(u,v)$ is not synchronized if and only if the binary trees $\TreeS$ and $\TreeT$ are not synchronized.
\smallbreak

The letter $u_i$ is equal to $0$ if and only if there is no descending relation of target $\IntervalPoset_i$ in $\IntervalPoset$, namely, if and only if the node $i$ has no right child in the tree $\TreeS$ (see Section~\ref{sss:TamariIntervals-IntervalPosets}). To summarize, $u_i = 0$ if and only if 
the right subtree of the node $i$ is a leaf oriented to the right. 
Now, as recall in Section~\ref{Subsec:binarytrees}, a leaf linked to the node $i$ is oriented to the right if and only if the $i$-th letter in the canopy corresponding to $\TreeS$ is $1$.
\smallbreak

Symmetrically, $v_{i+1} = 0$ if and only if there is no increasing relation of target $\IntervalPoset_{i+1}$ in $\IntervalPoset$, namely, if and only if the node $i+1$ has no left child in the tree $\TreeT$. 
Then, $v_{i+1} = 0$ if and only if the left subtree of the node $i + 1$ is a leaf oriented to the left. As seen in Section~\ref{Sec:préliminaires}, a leaf linked to the node $i + 1$ is oriented to the left if and only if the $i$-th letter in the canopy corresponding to $\TreeT$ is $0$.
\smallbreak

To conclude, $u_i = 0$ and $v_{i+1} = 0$ if and only if the letter of index $i$ in the canopy of the tree $\TreeS$ is different from the letter of index $i$ in the canopy of the tree $\TreeT$. 
Therefore, $(u,v)$ is not synchronized if and only if the binary trees $\TreeS$ and $\TreeT$ are not synchronized.
\end{proof}
\smallbreak

An interval-poset $\IntervalPoset$ of size $n \geq 3$ is \Def{new} if
\begin{enumerate}
\item there is no decreasing relation of source $\IntervalPoset_n$,
\item there is no increasing relation of source $\IntervalPoset_1$,
\item there is no relation $\IntervalPoset_{i+1}\lhd \IntervalPoset_{j+1}$ and $\IntervalPoset_j\lhd \IntervalPoset_i$ with $i<j$.
\end{enumerate}
The definition of a new interval-poset is given in~\cite{Rog20}.
\smallbreak

For any $n \geq 3$, a Tamari interval diagram $(u,v)$ of size $n$ is \Def{new} if the following conditions are satisfied
\begin{enumerate}[label=(\roman*)]
 \item \label{ditnv1} $0\leq u_i \leq n-i-1$ for all $i\in[n-1]$,
 \item \label{ditnv2} $0\leq v_j\leq j-2$ for all $j\in [2,n]$,
 \item \label{ditnv3} $u_{k}<l-k-1$ or $v_{l}<l-k-1$ for all $k,l \in [n]$ such that $k+1 < l$.
\end{enumerate}
\smallbreak

\begin{Proposition}\label{ditnv=intnv}
Let $n\geq 3$ and $(u,v)\in\dit$. The Tamari interval diagram $(u,v)$ is new if and only if $\chi(u,v)$ is a new interval-poset.
\end{Proposition}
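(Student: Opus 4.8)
The plan is to verify, condition by condition, that a Tamari interval diagram $(u,v)$ of size $n\geq 3$ satisfies~\ref{ditnv1}, \ref{ditnv2}, \ref{ditnv3} if and only if the interval-poset $P:=\chi(u,v)$ satisfies the three defining conditions of a new interval-poset. Everything reduces to unwinding the definition of $\chi$, so I first record the relevant dictionary: by construction of $\chi$, for all $1\leq i<j\leq n$ one has $x_j\lhd x_i$ in $P$ if and only if $u_i\geq j-i$, and $x_i\lhd x_j$ in $P$ if and only if $v_j\geq j-i$. (Recall also that a decreasing relation has source the vertex of larger index, and an increasing relation has source the vertex of smaller index.)

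First I would treat conditions~(1) and~(2) of a new interval-poset. The poset $P$ has no decreasing relation of source $x_n$ if and only if there is no $i\in[n-1]$ with $x_n\lhd x_i$, that is, no $i\in[n-1]$ with $u_i\geq n-i$; equivalently $u_i\leq n-i-1$ for every $i\in[n-1]$. Since $0\leq u_i$ holds automatically as $(u,v)\in\dit$, this is precisely~\ref{ditnv1}. Symmetrically, $P$ has no increasing relation of source $x_1$ if and only if there is no $j\in[2,n]$ with $x_1\lhd x_j$, that is, no $j$ with $v_j\geq j-1$; equivalently $v_j\leq j-2$ for every $j\in[2,n]$, which together with $0\leq v_j$ is exactly~\ref{ditnv2}.

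For condition~(3) I would observe that, for $1\leq i<j\leq n-1$, the two relations $x_{i+1}\lhd x_{j+1}$ (an increasing relation, since $i+1<j+1$) and $x_j\lhd x_i$ (a decreasing relation) hold simultaneously in $P$ exactly when $v_{j+1}\geq j-i$ and $u_i\geq j-i$. The substitution $k:=i$, $l:=j+1$ sets up a bijection between the index pairs $\{(i,j):1\leq i<j\leq n-1\}$ and $\{(k,l):k,l\in[n],\ k+1<l\}$, and under it $j-i=l-k-1$. Hence the absence in $P$ of a forbidden pair as in~(3) is exactly the statement that $u_k<l-k-1$ or $v_l<l-k-1$ for all $k,l\in[n]$ with $k+1<l$, i.e.~\ref{ditnv3}. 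Combining the three equivalences gives that $P$ is a new interval-poset if and only if $(u,v)$ is a new Tamari interval diagram.

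There is no real obstacle here; the only step requiring a little care is the index bookkeeping in condition~(3) --- checking that $(k,l)=(i,j+1)$ matches the admissible ranges on both sides and converts the bound $l-k-1$ into the ``gap'' $j-i$ that governs the relations of $P$. The hypothesis $n\geq 3$ serves only to make both families of index pairs, and hence both notions of ``new'', nonempty and meaningful.
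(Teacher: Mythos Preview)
Your proposal is correct and follows essentially the same approach as the paper --- a condition-by-condition correspondence via the dictionary $x_j\lhd x_i\Leftrightarrow u_i\geq j-i$ and $x_i\lhd x_j\Leftrightarrow v_j\geq j-i$, with the same substitution $k=i$, $l=j+1$ for condition~(3). Your version is in fact more carefully written: the paper only spells out the direction ``$P$ not new $\Rightarrow$ $(u,v)$ not new'' explicitly and leaves the converse implicit, whereas you state each condition as an equivalence and check the index ranges for the bijection $(i,j)\leftrightarrow(k,l)$.
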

\smallbreak

\begin{proof}
Let us show that $\IntervalPoset := \chi(u,v)$ is not new if and only if $(u,v)$ is not new. Theorem~\ref{bijDIT} leads to three cases.
\begin{itemize}
\item Let us consider the negation of~\ref{ditnv1} of a new Tamari interval diagram by assuming that $u_i = n-i$. By Lemma~\ref{upivpi}, this implies that $\IntervalPoset_n \lhd \IntervalPoset_i$ with $i\in[n-1]$. Reciprocally, if $\IntervalPoset_n \lhd \IntervalPoset_i$ with $i\in[n-1]$, then by the point~\ref{pip1} of interval-poset properties, all vertices between $\IntervalPoset_i$ and $\IntervalPoset_n$ are in decreasing relation to $\IntervalPoset_i$. Since $u_i := \# \{\IntervalPoset_j\in \IntervalPoset : \IntervalPoset_j\lhd \IntervalPoset_i \mbox{ and } i<j\}$, it implies that $u_i = n-i$.

\item Likewise, by Lemma~\ref{upivpi}, if $v_j = j-1$, then $\IntervalPoset_1\lhd \IntervalPoset_j$ with $j\in [2,n]$. By the point~\ref{pip2} of interval-poset properties, we get the converse property. 

\item According to Lemma~\ref{upivpi}, if $u_{i} \geq j - i$, then $\IntervalPoset_j\lhd \IntervalPoset_i$, and if $v_{j+1} \geq j - i$, then $\IntervalPoset_{i+1}\lhd \IntervalPoset_{j+1}$ with $i<j$. We obtain the two converse properties with respectively the point~\ref{pip1} and the point~\ref{pip2} of interval-poset properties. Specifically, by setting $l := j+1$ and $k := i$, we find the formulation of the negation of~\ref{ditnv3} of a new Tamari interval diagram, with $k+1<l$.
\end{itemize}
\end{proof}
\smallbreak

In~\cite{Rog20} it is shown that a Tamari interval is new if and only if the associated interval-poset is new. With Proposition~\ref{ditnv=intnv} we get the following result.
\smallbreak

\begin{Proposition}
Let $n \geq 3$ and $(u,v)\in\dit$. The Tamari interval diagram $(u,v)$ is new if and only if $\rho(\chi(u,v))$ is a new Tamari interval. 
\end{Proposition}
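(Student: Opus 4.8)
The plan is to chain together two equivalences that are already available, with the bijection $\rho$ serving as the bridge between interval-posets and Tamari intervals.

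First I would invoke Proposition~\ref{ditnv=intnv}: for $(u,v)\in\dit$ with $n\geq 3$, the Tamari interval diagram $(u,v)$ is new if and only if the interval-poset $\chi(u,v)$ is new. Next I would use the theorem of Rognerud~\cite{Rog18} recalled just above the statement, namely that an interval-poset $P$ of size $n\geq 3$ is new if and only if the Tamari interval $\rho(P)$ is new. Since $\rho$ is a bijection from $\ip$ to $\itam$ and $\chi(u,v)\in\ip$ by Theorem~\ref{bijDIT}, applying this with $P:=\chi(u,v)$ shows that $\chi(u,v)$ is a new interval-poset if and only if $\rho(\chi(u,v))$ is a new Tamari interval. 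Concatenating the two equivalences gives the claim.

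The only point deserving attention is that the notion of ``new'' for interval-posets used in Proposition~\ref{ditnv=intnv} must be literally the one used in~\cite{Rog18}; this holds because the definition recalled in this section (no decreasing relation of source $x_n$, no increasing relation of source $x_1$, and no pair of relations $x_{i+1}\lhd x_{j+1}$ and $x_j\lhd x_i$ with $i<j$) is taken verbatim from~\cite{Rog18}. Consequently there is no genuine obstacle here: the proposition is a formal consequence of Proposition~\ref{ditnv=intnv} together with the cited result of~\cite{Rog18}, and the proof is a two-line composition of these two facts.
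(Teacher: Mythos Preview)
Your proposal is correct and is exactly the approach taken in the paper: the proposition is presented there as an immediate consequence of Proposition~\ref{ditnv=intnv} combined with the result from~\cite{Rog18} that a Tamari interval is new if and only if the associated interval-poset is new.
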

\smallbreak

\begin{Proposition}\label{synnn}
Let $n \geq 3$ and $(u,v)\in\dit$. If $(u,v)$ is synchronized, then $(u,v)$ is not new.
\end{Proposition}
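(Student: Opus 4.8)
The plan is to argue by contradiction. Suppose $(u,v)\in\dit$ is a synchronized Tamari interval diagram of size $n\ge 3$ that is also new; I will exhibit a violation of one of the defining conditions \ref{ditnv1}, \ref{ditnv2}, \ref{ditnv3} of a new Tamari interval diagram.

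First I would read off the behaviour at the two ends of the words from newness. Condition \ref{ditnv2} with $j=2$ forces $v_2\le 2-2=0$, hence $v_2=0$, and condition \ref{ditnv1} with $i=n-1$ forces $u_{n-1}\le n-(n-1)-1=0$, hence $u_{n-1}=0$. Synchronization at the index $n-1$ (where $u_{n-1}=0$) then gives $v_n\ne 0$, so the set $\{j\in[2,n]:v_j\ne 0\}$ is non-empty; let $j$ be its smallest element. Because $v_2=0$ we have $j\ge 3$; in particular $j-1\in[2,n]$ and, by minimality of $j$, $v_{j-1}=0$. Applying synchronization at the index $j-2\in[n-1]$ and using $v_{(j-2)+1}=v_{j-1}=0$, we conclude $u_{j-2}\ne 0$, that is $u_{j-2}\ge 1$.

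Now I would invoke condition \ref{ditnv3} with $k:=j-2$ and $l:=j$: these satisfy $k,l\in[n]$ and $k+1=j-1<j=l$, while $l-k-1=1$, so newness would require $u_{j-2}<1$ or $v_j<1$. This contradicts $u_{j-2}\ge 1$ and $v_j\ge 1$, and the proof is complete. The argument is counting-free; the only point requiring care is the index bookkeeping, and this is precisely where the hypothesis $n\ge 3$ enters, ensuring $j-2\ge 1$ so that $v_{j-1}$ is a genuine letter of $v$, necessarily equal to $0$. Conceptually, \ref{ditnv1} and \ref{ditnv2} pin down $u_{n-1}=v_2=0$, synchronization then propagates non-zero letters inward from both ends of $(u,v)$, and the first index at which the ``$u$-part'' gives way to the ``$v$-part'' realises exactly the configuration forbidden by \ref{ditnv3}. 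I do not expect any real obstacle beyond this routine verification of ranges.
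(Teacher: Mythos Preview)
Your proof is correct and follows essentially the same strategy as the paper: both derive $v_2=0$ and $u_{n-1}=0$ from conditions~\ref{ditnv1} and~\ref{ditnv2}, then combine synchronization with the special case $l=k+2$ of condition~\ref{ditnv3} (which forbids $u_k\geq 1$ and $v_{k+2}\geq 1$ simultaneously) to reach a contradiction. The only organizational difference is that the paper starts from $u_1\ne 0$ and inductively propagates $u_k\ne 0$ forward along $k\in[n-2]$ until it collides with $v_n\ne 0$, whereas you locate the collision in one stroke by taking the minimal $j$ with $v_j\ne 0$; this avoids the induction but is otherwise the same argument.
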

\smallbreak

\begin{proof}
Assume by contradiction that $(u,v)$ is synchronized and new. Since $(u,v)$ is new, one has $u_{i}<n-i$ for $i\in[n-1]$, and $v_{j}<j-1$ for $j\in[2,n]$. In particular, $u_{n-1} = 0$ and $v_{2} = 0$. This implies, since $(u,v)$ is synchronized, that $u_{1} \ne 0$ and $v_{n} \ne 0$. 
Furthermore, since $(u,v)$ is new, Condition~\ref{ditnv3} of a Tamari interval diagram is satisfied. Specifically, for any $k\in[n-2]$, either $u_{k}<1$ or $v_{k+2}<1$. Let us denote by $(\ast)$ this condition.
Assuming that $u_{1} \ne 0$, since $(u,v)$ is synchronized, one has either $u_{2} \ne 0$ or $v_{3} \ne 0$. By $(\ast)$, the second choice is impossible, thus $u_{2} \ne 0$. By the same reasoning, for every $k\in[n-2]$, $u_{k} \ne 0$. However, also by assumption one has $v_{n} \ne 0$. Therefore, $u_{n-2} \ne 0$ and $v_{n} \ne 0$ which is a contradiction with~$(\ast)$.
\end{proof}
\smallbreak

\subsection{Order structure}

Firstly, we endow the set of cubic coordinates with an order relation. Then we show that there is an isomorphism between this poset and the poset of Tamari intervals. The two bijections constructed in the first two parts of Section~\ref{sec:coordonnées-cubiques} allow us to establish this poset isomorphism.
\smallbreak

Let $n \geq 0$ and $c, c' \in\cc$. We set that $c\Leq c'$ if and only if $c_i \leq c'_i$ for all $i \in[n - 1]$. Endowed with $\Leq$, the set $\cc$ is a poset called the \Def{cubic coordinate poset}.
\smallbreak

Recall that the map $\phi$ is defined at the beginning of Section~\ref{subsecCC} and the map $\chi$ is defined at the beginning of Section~\ref{subsecIP}. Let $(\TreeS,\TreeT), (\TreeS',\TreeT')\in \SetIntervalBinaryTrees$ and let $\psi := \phi^{-1} \circ \chi^{-1} \circ \rho^{-1}$ be the map from the Tamari interval poset to the cubic coordinate poset $\cc$.
\smallbreak

For the next results in all this section, let us denote by $c := \psi(\TreeS,\TreeT)$, $c' := \psi(\TreeS',\TreeT')$ and $(u,v) := \phi(c)$, $(u',v') := \phi(c')$, and $\IntervalPoset := \chi(u,v)$, $\IntervalPoset' := \chi(u',v')$.
\smallbreak

\begin{Lemma}\label{sens-direct-isomorphism}
If $(\TreeS',\TreeT')$ covers $(\TreeS,\TreeT)$, then there is a unique different component $c_i$ between $c$ and $c'$ such that $c_i < c'_i$ and there is no cubic coordinate $c''$ different from $c$ and $c'$ such that $c\Leq c''\Leq c'$.
\end{Lemma}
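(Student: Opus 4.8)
The plan is to translate the covering relation $[S,T] \lessdot [S',T']$ in $\itam$ through the chain of bijections $\psi$, $\phi$, $\chi$ (whose composition identifies cubic coordinates with Tamari intervals via interval-posets) and then read off the effect on cubic coordinates. First I would recall that, since $[S',T']$ covers $[S,T]$ in the Tamari interval lattice, exactly one of the two cases holds: either $S'$ is obtained from $S$ by a single right rotation of some edge $(k,l)$ and $T' = T$, or $T'$ is obtained from $T$ by a single right rotation of some edge $(k,l)$ and $S' = S$ (this is the covering description recalled just before Lemma~\ref{rotation/ip}). By Lemma~\ref{rotation/ip} this is equivalent to $P$ and $P'$ satisfying condition $(\star)$ at vertex $x_k$ (in the first case) or $(\diamond)$ at vertex $x_l$ (in the second case).

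Next I would analyze what $(\star)$ does to the Tamari interval diagram $(u,v) = \chi^{-1}(P)$. Using the reconstruction formulas \eqref{réciproque}–\eqref{réciproque2}, the letter $u_k$ counts the decreasing relations of goal $x_k$, and $(\star)$ says precisely that $P'$ is obtained from $P$ by adding the minimal nonempty set of decreasing relations of goal $x_k$ whose addition still yields an interval-poset; by the interval-poset property \ref{pip1}, this set is $\{x_{k+1}\lhd x_k, \dots, x_l \lhd x_k\}$ for the smallest $l > k$ such that $x_l$ is not already related to $x_k$ (the picture is Figure~\ref{IPD}). Hence $u'_k = l-k$ while $u'_i = u_i$ for every $i \neq k$, and all increasing relations — hence all of $v$ — are unchanged. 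Passing to cubic coordinates via $\phi$, the component $c_i = u_i - v_{i+1}$ changes only at $i = k$, where it strictly increases from $u_k - v_{k+1}$ to $l - k - v_{k+1}$ (and $u'_k \neq 0$ forces $v_{k+1} = 0$, so in fact $c_k$ goes from $u_k$ to $l-k > u_k \geq 0$); all other components are fixed. The case $(\diamond)$ is symmetric, affecting only $v_l$ and hence only the component $c_{l-1}$, which strictly increases. In both cases we obtain exactly one differing component, and $c_i < c'_i$ there.

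For the second assertion, suppose some cubic coordinate $c''$ satisfies $c \preccurlyeq c'' \preccurlyeq c'$. Since $c$ and $c'$ agree in every component except the $k$-th (resp. $(l-1)$-th), the same is true of $c''$, so $c''$ is determined by a single integer $m$ with $c_k \leq m \leq c'_k$ (writing the argument for the $(\star)$ case). I would then show $m \in \{c_k, c'_k\}$: the key point is that the intermediate values are forbidden because they do not correspond to valid interval-posets. Concretely, applying $\psi^{-1}\phi^{-1}$ (or tracking through $\chi$), a cubic coordinate with $k$-th component $m$ with $c_k < m < c'_k = l-k$ would force $u''_k = m$, i.e. decreasing relations $x_{k+1}\lhd x_k, \dots, x_{k+m}\lhd x_k$ but not $x_{k+m+1}\lhd x_k$; this is exactly an interval-poset strictly between $P$ and $P'$, contradicting the minimality built into condition $(\star)$ (equivalently, contradicting that $[S',T']$ covers $[S,T]$, since $\psi\phi^{-1}\chi^{-1}$-images would give a Tamari interval strictly between them).

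The main obstacle I anticipate is the bookkeeping in the second paragraph: one must be careful that adding the decreasing relations $x_{k+1}\lhd x_k,\dots,x_l\lhd x_k$ really does produce the interval-poset $P'$ corresponding to the single rotation of edge $(k,l)$ — this requires invoking the explicit description in Figures~\ref{OpéRotD} and~\ref{IPD} from the proof of Lemma~\ref{rotation/ip} — and that no other letter of $u$ or $v$ is disturbed (in particular that $v_{k+1} = 0$ already, so no compatibility issue arises). Once that is pinned down, the ``no intermediate element'' part is a short consequence of the minimality clause in $(\star)$ / $(\diamond)$, transported back through the bijections.
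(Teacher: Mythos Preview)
Your plan is correct and follows essentially the same route as the paper: reduce the covering relation to conditions $(\star)$/$(\diamond)$ via Lemma~\ref{rotation/ip}, observe that only one letter of $u$ (resp.\ $v$) changes, hence only one component of $c$, and then invoke the minimality clause in $(\star)$/$(\diamond)$ to rule out any intermediate cubic coordinate. This is exactly what the paper does, only more tersely.

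Two minor points of bookkeeping, both of which you already flagged as the likely obstacle. First, your value $u'_k = l-k$ is not quite right: with $l = k + u_k + 1$ the smallest index not already related to $x_k$, transitivity forces not only $x_l \lhd x_k$ but also $x_{l+1} \lhd x_k,\dots,x_{l+u_l}\lhd x_k$ (since $x_{l+j}\lhd x_l$ in $P$), so the correct value is $u'_k = l-k+u_l$; this matches the tree picture where $|C| = u_l$. This does not affect your argument, since all you need is that exactly one letter changes and that the change is minimal. Second, your parenthetical ``equivalently, contradicting that $[S',T']$ covers $[S,T]$'' would require knowing that componentwise order on Tamari diagrams reflects the Tamari order, a standard fact but not one proved in the paper at this stage; it is safer to stick with your primary argument through the minimality of $(\star)$, which is what the paper does as well.
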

\smallbreak

\begin{proof}
By Lemma~\ref{rotation/ip} we know that $(\TreeS',\TreeT')$ covers $(\TreeS,\TreeT)$ if and only if $\IntervalPoset$ and $\IntervalPoset'$ satisfy either $(\star)$ or $(\diamond)$. 
Let us assume that $\IntervalPoset$ and $\IntervalPoset'$ satisfy either $(\star)$ or $(\diamond)$ for the vertex $\IntervalPoset_i$. By using~\eqref{réciproque} and \eqref{réciproque2}, two cases are possible. 

\begin{itemize}
\item Suppose that $\IntervalPoset$ and $\IntervalPoset'$ satisfy $(\star)$, then since only decreasing relations are added in $\IntervalPoset'$ relative to $\IntervalPoset$, only $u'$ is modified in $(u',v')$ relative to $(u,v)$. Furthermore, since $\IntervalPoset'$ is obtained by adding decreasing relations of target $\IntervalPoset_i$ in $\IntervalPoset$, only the letter $u'_i$ in $u'$ is increased relative to $u$. Moreover, since the number of descending relations added in $\IntervalPoset$ is minimal, there cannot be any Tamari interval diagram between $(u,v)$ and $(u',v')$, and thus no cubic coordinate between $c$ and $c'$. In the end, the image by $\phi^{-1}$ of $(u',v')$ is the cubic coordinate $c'$ with $c'_i = u'_i$ and $c'_j=c_j$ for any $j\ne i$. 

\item Suppose that $\IntervalPoset$ and $\IntervalPoset'$ satisfy $(\diamond)$, 
the arguments are roughly the same, with the difference that this time, only increasing relations are removed in $\IntervalPoset'$ relative to $\IntervalPoset$. We obtain that only the component $c'_{i-1} = - v'_i$ of $c'$ has increased relative to $c$.
\end{itemize}

In both cases, the implication is true.
\end{proof}
\smallbreak

Note that if there is a unique different component $c_i$ between $c$ and $c'$ such that $c_i < c'_i$ and there is no cubic coordinate $c''$ different from $c$ and $c'$ such that $c\Leq c''\Leq c'$, then in particular $c'$ covers $c$. Thus, Lemma~\ref{sens-direct-isomorphism} has the consequence that if $(\TreeS',\TreeT')$ covers $(\TreeS,\TreeT)$, then $c'$ covers~$c$.
\smallbreak

\begin{Lemma}\label{lem:baspetitethautgrand}
Let $n \geq 0$ and  $c, c' \in \cc$. If $c \Leq c'$, then there is a cubic coordinate $c''$ such that $u'' = u$ and $v'' = v'$, where $(u'',v'') := \phi(c'')$.
\end{Lemma}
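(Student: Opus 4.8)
The plan is to show that the ``mixed'' pair $(u,v')$, built from the first word $u$ of $\phi(c)$ and the second word $v'$ of $\phi(c')$, is itself a Tamari interval diagram. Once this is known, Theorem~\ref{bijDIT-CC} yields a (unique) cubic coordinate $c'' := \phi^{-1}(u,v')\in\cc$, and since $(u'',v'') := \phi(c'') = (u,v')$ we obtain $u'' = u$ and $v'' = v'$, which is the assertion. So the whole proof reduces to verifying the three defining conditions of a Tamari interval diagram for $(u,v')$: that $u$ is a Tamari diagram, that $v'$ is a dual Tamari diagram, and that $u$ and $v'$ are compatible.

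The first two conditions hold for free: $u$ is a Tamari diagram because $(u,v) = \phi(c)\in\dit$, and $v'$ is a dual Tamari diagram because $(u',v') = \phi(c')\in\dit$. For compatibility, I would first translate the hypothesis $c\Leq c'$ through the definition of $\phi$. Since $u_i = \max(c_i,0)$ and $u'_i = \max(c'_i,0)$ for $i\in[n-1]$, while $u_n = u'_n = 0$, the inequalities $c_i\leq c'_i$ give $u_i\leq u'_i$ for all $i\in[n]$. Now take $1\leq i<j\leq n$ with $u_i\geq j-i$; then $u'_i\geq u_i\geq j-i$, and applying the compatibility of the genuine Tamari interval diagram $(u',v')$ to the pair $(i,j)$ forces $v'_j < j-i$. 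Hence $u$ and $v'$ are compatible, so $(u,v')\in\dit$ and we are done.

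I do not expect a genuine obstacle here: the whole argument hinges on the elementary remark that $c\Leq c'$ forces $u\leq u'$ componentwise, after which compatibility of $(u,v')$ is simply inherited from that of $(u',v')$. This is the combinatorial counterpart of the observation made just before the statement: writing $[S,T] := \rho(\chi(\phi(c)))$ and $[S',T'] := \rho(\chi(\phi(c')))$, the word $u$ is the Tamari diagram of $S$ and $v'$ is the dual Tamari diagram of $T'$, so $(u,v')\in\dit$ is equivalent to $S\Leq_{\circlearrowright} T'$; and $c\Leq c'$ gives $S\Leq_{\circlearrowright} S'\Leq_{\circlearrowright} T'$. One could develop the proof along that second, tree-theoretic route, at the cost of invoking the correspondence between the componentwise order on (dual) Tamari diagrams and the Tamari order on binary trees; the short combinatorial route above avoids this and is the one I would write up.
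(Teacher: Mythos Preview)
Your proof is correct. The paper does not give a formal proof of this lemma; instead it justifies it in the preceding paragraph via the tree-theoretic route you describe as an alternative: writing $[S,T]$ and $[S',T']$ for the Tamari intervals associated with $c$ and $c'$, one has $S\Leq_{\circlearrowright} S'\Leq_{\circlearrowright} T'$, so $[S,T']$ is a Tamari interval and therefore $(u,v')$ is a Tamari interval diagram. Your combinatorial route is more self-contained: it only uses the monotonicity of $\max(\cdot,0)$ to get $u\leq u'$ and then transfers the compatibility of $(u',v')$ to $(u,v')$, without invoking the correspondence between the componentwise order on (dual) Tamari diagrams and the Tamari order on binary trees. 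This also sidesteps a mild awkwardness in the paper's presentation, namely that the implication $c\Leq c'\Rightarrow [S,T]\leqdoublerotation[S',T']$ is only formally established later (Theorem~\ref{morphPoset}), using the present lemma. Either argument is fine, but yours is the one I would keep in a write-up.
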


\begin{proof}
The composition of bijections $\phi^{-1} \circ \chi^{-1}$ associates a pair of words $(u, v)$ to a pair of comparable binary trees $(\TreeS, \TreeT)$ such that $u$ encodes the binary tree $\TreeS$ and $v$ encodes the binary tree $\TreeT$. 
By this composition, $u$ (resp.\ $v$) is obtained by counting in $\TreeS$ (resp.\ $\TreeT$) the number of left (resp.\ right) descendants of each node for the infix order. 
Additionally, we know that if $(\TreeS,\TreeT) \Leq_\TamariIntervalOrder (\TreeS',\TreeT')$, then the interval $(\TreeS, \TreeT')$ is a Tamari interval because we always have $\TreeS \Leq_{\TamariOrder} \TreeS' \Leq_{\TamariOrder} \TreeT'$. The construction of $\phi^{-1} \circ \chi^{-1}$ and the fact that $(\TreeS, \TreeT')$ is a Tamari interval imply that the pair $(u, v')$ is always a Tamari interval diagram. Therefore, $c''$ is a cubic coordinate.
\end{proof}
\smallbreak

For any $c, c' \in \cc$, let
\begin{equation}
	\DiffIndexes^{-}\Par{c, c'} := \Bra{d ~:~ c_d \ne c'_d \mbox{ and  } c'_d \leq 0},
\end{equation}
and 
\begin{equation}
	\DiffIndexes^{+}\Par{c, c'} := \Bra{d ~:~c_d \ne c'_d \mbox{ and  } c_d \geq 0}.
\end{equation}
\smallbreak

Now consider the case where $c$ and $c'$ share either their Tamari diagrams or their associated dual Tamari diagrams, then we have the two following lemmas.
\smallbreak

\begin{Lemma}\label{lem:mêmebasethautdifferent}
Let $n \geq 0$ and $c, c' \in \cc$. If $c \Leq c'$ such that $u = u'$ and $\# \DiffIndexes^{-}\Par{c, c'} = r$, then there is a chain
\begin{equation}
\Par{c = c^{(0)}, c^{(1)}, \dots, c^{(r-1)}, c^{(r)}= c'},
\end{equation}
such that $\# \DiffIndexes^{-}\Par{c^{(i-1)}, c^{(i)}} = 1$ for all $i \in [r]$.
\end{Lemma}
\smallbreak

\begin{proof}
Let 
\begin{equation}
\DiffIndexes^{-}\Par{c, c'} = \{d_1, d_2, \dots, d_r\}
\end{equation} 
with $d_{k-1} < d_{k}$ for all $k \in [2,r]$.
For any $k \in [r]$, let $c^{(k)}$ be a tuple obtained by replacing in $c$ all the components $c_{d_i}$ by the components $c'_{d_i}$ for $i \in [k]$. 
The tuple $c^{(k)}$ is a cubic coordinate. Indeed, by denoting $\phi(c^{(k)})$ by $(u^{(k)}, v^{(k)})$, one has that $u^{(k)} = u = u'$, so the compatibility with $v^{(k)}$ is always satisfied. Therefore, the only thing to check is that $v^{(k)}$ is a dual Tamari diagram. Condition~\ref{condDTD1+} is naturally satisfied. Since $c \leq c'$, one has $v_i \geq v'_i$ for all $i \in [n]$. Therefore, Condition~\ref{condDTD2+} is satisfied because for $i \in [d_k]$ and $j \in [i+1,n]$, $v^{(k)}_i = v'_i$ and $v^{(k)}_j = v_j$, and so $v_j^{(k)} - v_i^{(k)} = v_j - v'_i \geq v_j - v_i \geq j - i$. The word $v^{(k)}$ is then a dual Tamari diagram. Consider the chain 
\begin{equation}
\Par{c = c^{(0)}, c^{(1)}, \dots, c^{(r-1)}, c^{(r)}= c'}.
\end{equation}
For all $i \in [r]$, since we change only one component between $c^{(i-1)}$ and $c^{(i)}$, one has $\# \DiffIndexes^{-}\Par{c^{(i-1)}, c^{(i)}} = 1$.
\end{proof}
\smallbreak

\begin{Lemma}\label{lem:mêmehautetbasdifferent}
Let $n \geq 0$ and $c, c' \in \cc$. If $c \Leq c'$ such that $v = v'$ and $\DiffIndexes^{+}\Par{c, c'} = s$, then there is a chain
\begin{equation}
\Par{c = c^{(0)}, c^{(1)}, \dots, c^{(s-1)}, c^{(s)}= c'},
\end{equation}
such that $\# \DiffIndexes^{+}\Par{c^{(i-1)}, c^{(i)}} = 1$ for all $i \in [s]$.
\end{Lemma}
\smallbreak

\begin{proof}
The proof is similar to the demonstration of Lemma~\ref{lem:mêmebasethautdifferent}. Let 
\begin{equation}
\DiffIndexes^{+}\Par{c, c'} = \{d_1, d_2, \dots, d_s\}
\end{equation} 
with $d_{k-1} < d_{k}$ for all $k \in [2,s]$.
For any $k \in [s]$, let $c^{(k)}$ be a tuple obtained by replacing in $c$ all the components $c_{d_i}$ by the components $c'_{d_i}$ for $i \in [k]$.
As we did in the proof of Lemma~\ref{lem:mêmebasethautdifferent}, we can check that, for any $k \in [s]$, the tuple $c^{(k)}$ is a cubic coordinate.
Then, by consider the chain 
\begin{equation}
\Par{c = c^{(0)}, c^{(1)}, \dots, c^{(s-1)}, c^{(s)}= c'},
\end{equation}
one has that $\# \DiffIndexes^{+}\Par{c^{(i-1)}, c^{(i)}} = 1$ for all $i \in [s]$.

\end{proof}
\smallbreak

\begin{Theorem}\label{morphPoset}
For any $n \geq 0$, the map $\psi$ is a poset isomorphism.
\end{Theorem}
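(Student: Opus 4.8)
The plan is to verify that $\psi=\phi^{-1}\circ\chi^{-1}\circ\rho^{-1}$ is a bijection which both preserves and reflects the order. It is a composition of the bijection $\rho$ of~\cite{CP15}, the bijection $\chi$ of Theorem~\ref{bijDIT}, and the bijection $\phi$ of Theorem~\ref{bijDIT-CC}, hence a bijection; so only the order statement remains. Fix $[S,T],[S',T']\in\itam$ and set $c:=\psi([S,T])$, $c':=\psi([S',T'])$; the goal is to show $[S,T]\leqdoublerotation[S',T']$ if and only if $c\Leq c'$.

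For the forward implication I would argue through covering relations. As $\itam$ is finite, $[S,T]\leqdoublerotation[S',T']$ holds exactly when there is a finite sequence of covering relations linking $[S,T]$ to $[S',T']$ in the Tamari interval poset. By Lemma~\ref{sens-direct-isomorphism} and the remark following it, each of these covering relations produces a covering relation---in particular a $\Leq$-relation---between the corresponding cubic coordinates; transitivity then gives $c\Leq c'$.

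For the converse I would use Lemma~\ref{lem:composanteparcomposante} to reduce to elementary steps. Given $c\Leq c'$, it produces a chain $c=c^{(0)},\dots,c^{(m)}=c'$ in which consecutive coordinates differ in a single component; moreover, splitting a step at $0$ by means of Lemma~\ref{lemmeutilepartout} whenever that component changes sign, one may assume the modified component stays $\geq 0$ (resp.\ $\leq 0$) throughout each step. It then suffices to check $\psi^{-1}(c^{(k-1)})\leqdoublerotation\psi^{-1}(c^{(k)})$ for every $k$. When the modified component $i$ stays $\geq 0$, the definition of $\phi$ shows that the two Tamari interval diagrams share their dual Tamari diagram and differ only in the $i$-th letter of their Tamari diagram, with the larger value for $c^{(k)}$; hence the second trees coincide and, via $\chi$, the associated interval-posets differ only by decreasing relations of goal the single vertex $x_i$. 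Applying condition $(\star)$ of Lemma~\ref{rotation/ip} repeatedly---each application being a single right rotation of the first tree while the second tree is unchanged---builds a sequence of covering relations in $\itam$ from $\psi^{-1}(c^{(k-1)})$ to $\psi^{-1}(c^{(k)})$. The case where the modified component stays $\leq 0$ is symmetric, with the first tree fixed, the dual Tamari diagram varying in one letter, and condition $(\diamond)$ in place of $(\star)$. Concatenating all steps yields $[S,T]\leqdoublerotation[S',T']$, and together with the forward implication this makes $\psi$ a poset isomorphism.

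The delicate point is this last reduction. A single-component modification of a cubic coordinate need not be a covering relation---the increment can exceed $1$---so one cannot simply invoke Lemma~\ref{sens-direct-isomorphism} in reverse; instead one must show that the corresponding modification of the interval-poset decomposes into successive minimal moves $(\star)$ (resp.\ $(\diamond)$), which amounts to the componentwise description of the Tamari order in terms of bracket vectors~\cite{HT72}. This is precisely what Lemmas~\ref{lem:baspetitethautgrand}, \ref{lem:mêmebasethautdifferent} and~\ref{lem:mêmehautetbasdifferent}, assembled in Lemma~\ref{lem:composanteparcomposante}, are built to support.
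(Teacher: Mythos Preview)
Your proposal is correct and follows essentially the same route as the paper: the forward direction via Lemma~\ref{sens-direct-isomorphism} and transitivity, the converse via the single-component chain of Lemma~\ref{lem:composanteparcomposante}, followed by a case analysis on the sign of the modified component translated through $\phi$, $\chi$, and $\rho$ into right rotations. Your explicit insertion of a stop at $0$ via Lemma~\ref{lemmeutilepartout} when a component changes sign is a small clarification the paper leaves implicit in its two-case split, and your appeal to Lemma~\ref{rotation/ip} packages what the paper re-argues inline; otherwise the arguments coincide.
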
 
\smallbreak

\begin{proof}
The map $\psi$ is an isomorphism of posets if $\psi$ and its inverse preserves the partial order. As these relations are transitive, Lemma~\ref{sens-direct-isomorphism} gives the direct implication. Suppose that $c \Leq c'$.
According to Lemma~\ref{lem:baspetitethautgrand}, Lemma~\ref{lem:mêmebasethautdifferent} and Lemma~\ref{lem:mêmehautetbasdifferent} there is always a chain between $c$ and $c'$ such that the components are independently increasing one by one. So we can see what happens when we change only one component $c_i$ by $c'_i$ at any step between $c$ and $c'$.
\smallbreak

Obviously, if $c_i = c'_i$, then $u_i = u'_i$ and $v_{i+1} = v'_{i+1}$ and no changes are made between the corresponding binary tree pairs. Suppose that $c_i < c'_i$, then three cases are possible.

\begin{itemize}
\item Suppose that $c'_i$ is positive and $c_{i}$ is positive or null.
The image by $\phi$ of $c$ and $c'$ differ for the letter $u_{i}$, namely $c'_{i}= u'_{i}$ and $c_{i}= u_{i}$, and $v_{i+1} = v'_{i+1} = 0$.
The difference of a letter $u_{i}$ between $(u,v)$ and $(u',v')$ is directly translated by the map $\chi$: the interval-poset $\IntervalPoset'$ has more decreasing relations of target $\IntervalPoset_i$ than the vertex $\IntervalPoset_i$ in $\IntervalPoset$. By the map $\rho$, it means that to go from the tree $\TreeS$ to the tree $\TreeS'$ at least one right rotation of the edge $(i,j)$ is made, where $j$ is the father of the node $i$ in~$\TreeS$.

\item Symmetrically, assume that $c'_i$ is negative or null, then $c'_i = - v'_{i+1}$, $c_i = - v_{i+1}$ and $u_i = u'_i = 0$. By the map $\chi$, the interval-poset $\IntervalPoset'$ has less decreasing relations of target $\IntervalPoset_{i+1}$ than the vertex $\IntervalPoset_{i+1}$ in $\IntervalPoset$. This implies by $\rho$ that to pass from the tree $\TreeT$ to the tree $\TreeT'$ at least one right rotation of the edge $(k,i+1)$ is made, where $k$ is the right child of the node $i+1$ in $\TreeT$.

\item Finally, with Lemma~\ref{lem:baspetitethautgrand}, the case where $c_i$ is negative and $c'_i$ is positive falls into the conjunction of the two previous cases.
\end{itemize}

Therefore, $c \Leq c'$ implies that $(\TreeS,\TreeT) \Leq_\TamariIntervalOrder (\TreeS',\TreeT')$. 
Hence, the map $\psi$ is an isomorphism of posets.
\end{proof}
\smallbreak

Let us denote by $\lessdot$ the covering relation of the poset $\cc$.
\smallbreak

\begin{Proposition}\label{prop:cubicoordinatecovering}
Let $n \geq 0$ and $c, c' \in \cc$ such that $c \Covered c'$. Then, there is a unique different component between $c$ and $c'$.
\end{Proposition}
\smallbreak

\begin{proof}
It is a consequence of Theorem~\ref{morphPoset} and Lemma~\ref{sens-direct-isomorphism}.
\end{proof}
\smallbreak

The following diagram provides a summary of the applications used in Section~\ref{sec:coordonnées-cubiques}. Recall that $\psi = \phi^{-1} \circ \chi^{-1} \circ \rho^{-1}$, therefore this diagram of poset isomorphisms is commutative.
\begin{equation} \label{equ:}
    \begin{tikzpicture}[Centering,xscale=2.5,yscale=2,font=\small]
        \node(TamIntDiag)at(0,0){$\dit$};
        \node(IntPoset)at(1,0){$\SetIntervalPoset(n)$};
        \node(CubicCoord)at(0,-1){$\cc$};
        \node(TamariInt)at(1,-1){$\SetIntervalBinaryTrees$};
        \draw[Map](TamIntDiag)--(IntPoset)node[midway,above]
            {$\chi$};
        \draw[Map](CubicCoord)--(TamIntDiag)node[midway,right]
            {$\phi$};
        \draw[Map](TamariInt)--(CubicCoord)node[midway,above]
            {$\psi$};
        \draw[Map](IntPoset)--(TamariInt)node[midway,right]
            {$\rho$};
    \end{tikzpicture}
\end{equation}
\smallbreak

A consequence of the poset isomorphism $\psi$ is that the order dimension~\cite{MP90,Tro02} of the poset of Tamari intervals is at most $n-1$.

\section{Geometric properties}\label{sec:réalisation-cubique}
In this section, we give a very natural geometrical realization for the lattices of cubic coordinates. After defining the cells of this realization, we give some properties related to them. Finally, we show that the lattice of the cubic coordinates is EL-shellable.
\smallbreak

\subsection{Cubic realizations}\label{CubicRealCC}

Theorem~\ref{morphPoset} provides a simpler translation of the order relation between two Tamari intervals. We provide the geometrical realization induced by this order relation, which is natural for cubic coordinates. In a combinatorial way we study the cells formed by this realization.
\smallbreak{}

For any $n \geq 0$, the \Def{cubic realization} of $\cc$ is the geometric object
$\CubicReal\Par{\cc}$ defined in the space $\R^{n-1}$ and obtained by placing for each $c \in \cc$ a vertex of coordinates $\Par{c_1, \dots, c_{n-1}}$, and by forming
for each $c, c' \in \cc$ such that $c \Covered c'$ an edge between $c$ and $c'$. Every edge of $\CubicReal\Par{\cc}$ is parallel to some vector in the canonical basis of $\mathbb{R}^{n-1}$.
\smallbreak

\begin{figure}[ht]
    \centering
    \scalebox{1.2}{
    \begin{tikzpicture}[Centering,xscale=1,yscale=1,rotate=-135]
        \draw[Grid](-1,-1)grid(4,4);
        \node[NodeGraph](000)at(0,0){};
        \node[NodeGraph](011)at(1,1){};
        \node[NodeGraph](002)at(0,2){};
        \node[NodeGraph](003)at(0,3){};
        \node[NodeGraph](022)at(2,2){};
        \node[NodeGraph](010)at(1,0){};
        \node[NodeGraph](012)at(1,2){};
        \node[NodeGraph](013)at(1,3){};
        \node[NodeGraph](032)at(3,2){};
        \node[NodeGraph](020)at(2,0){};
        \node[NodeGraph](021)at(2,1){};
        \node[NodeGraph](033)at(3,3){};
        \node[NodeGraph](031)at(3,1){};
        \node[NodeLabeldGraph,above of=000]{$(\bar{1},\bar{2})$};
        \node[NodeLabeldGraph,above of=011]{$(0,\bar{1})$};
        \node[NodeLabeldGraph,above of=002]{$(\bar{1},0)$};
        \node[NodeLabeldGraph,right of=003]{$(\bar{1},1)$};
        \node[NodeLabeldGraph,below of=022]{$(1,0)$};
        \node[NodeLabeldGraph,above of=010]{$(0,\bar{2})$};
        \node[NodeLabeldGraph,right of=012]{$(0,0)$};
        \node[NodeLabeldGraph,below of=013]{$(0,1)$};
        \node[NodeLabeldGraph,below of=032]{$(2,0)$};
        \node[NodeLabeldGraph,above of=020]{$(1,\bar{2})$};
        \node[NodeLabeldGraph,left of=021]{$(1,\bar{1})$};
        \node[NodeLabeldGraph,below of=033]{$(2,1)$};
        \node[NodeLabeldGraph,below of=031]{$(2,\bar{1})$};
        \draw[EdgeGraph](000)--(002);
        \draw[EdgeGraph](000)--(010);
        \draw[EdgeGraph](011)--(012);
        \draw[EdgeGraph](002)--(003);
        \draw[EdgeGraph](002)--(012);
        \draw[EdgeGraph](012)--(022);
        \draw[EdgeGraph](021)--(022);
        \draw[EdgeGraph](003)--(013);
        \draw[EdgeGraph](022)--(032);
        \draw[EdgeGraph](010)--(011);
        \draw[EdgeGraph](010)--(020);
        \draw[EdgeGraph](012)--(013);
        \draw[EdgeGraph](031)--(021);
        \draw[EdgeGraph](011)--(021);
        \draw[EdgeGraph](032)--(033);
        \draw[EdgeGraph](020)--(021);
        \draw[EdgeGraph](013)--(033);
        \draw[EdgeGraph](031)--(032);
    \end{tikzpicture}}
    \caption{$\CubicReal(\ccc(3))$.}
\label{RealCube}
\end{figure}
\smallbreak

Figure~\ref{RealCube} shows the cubic realization of $\ccc(3)$, where the elements are the vertices and the edges are the covering relations. Figure~\ref{DessinCC4} shows the cubic realization of $\ccc(4)$. In these drawings the negative sign components are denoted with a bar.
\smallbreak 

In algebraic topology, to define the tensor products of $A_{\infty}$-algebras, one can use a cell complex called the \Def{diagonal of the associahedron}. This complex has notably been studied by Loday~\cite{Lod11}, by Saneblidze and Umble~\cite{SU04}, and by Markl and Shnider~\cite{MS06}. More recently, there is a description of this object in~\cite{MTTV21}.
The realization of this complex seems to be identical to the cubic realization, up to continuous deformation.
\smallbreak

\subsection{Covering map}\label{sss:MinimalIncreasingMap}
Let $n \geq 0$. We define the set of
\begin{itemize}
    \item \Def{input-wings} as the set $\InputWings(\cc)$ containing any $c \in
    \cc$ which covers exactly $n-1$ elements,

    \item \Def{output-wings} as the set $\OutputWings(\cc)$ containing any $c \in
    \cc$ which is covered by exactly $n-1$ elements.
\end{itemize}
\smallbreak

Let $n \geq 0$ and $c\in\cc$.  
For $i \in [n-1]$, the \Def{covering map} $\uparrow_{i}$ sends $c$ to its covering differing only at index $i$, when such covering exists. We denote by $\uparrow c_{i}$ the letter which differs in $\uparrow_{i}(c)$.
\smallbreak

In particular, for $n\geq 0$, a cubic coordinate $c$ of size $n$ is an output-wing if for any $i\in[n-1]$, $\uparrow_i(c)$ is well-defined.
\smallbreak

Let $n \geq 0$ and $c\in\cc$, and $(u,v) := \phi(c)$.
If $\uparrow c_{i}$ is positive, then the letter $u_i$ increases and becomes equal to $\uparrow c_{i}$ and $v_{i+1}$ is equal to $0$. Then, we define $\uparrow u_{i} := \uparrow c_{i}$.
If $\uparrow c_{i}$ is negative or null, then $v_{i+1}$ decreases and becomes equal to $|\uparrow c_{i}|$ and $u_i$ is equal to $0$. Then, we set $\downarrow v_{i+1} := -\uparrow c_{i}$.
\smallbreak

\begin{Lemma} \label{lemmequifaitgagnerdutps}
Let $n\geq 0$ and $c\in\cc$, and $i\in[n-1]$ such that $\uparrow_i(c)$ is well-defined. Then, 
\begin{enumerate}[label=(\roman*)]
\item if $c_i < 0$, then $\uparrow c_{i}\leq 0$,
\item if $c_i \geq 0$, then $\uparrow c_{i} > 0$.
\end{enumerate}
\end{Lemma}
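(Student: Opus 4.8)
The plan is to unpack the definition of $\uparrow_i(c)$ and argue by contradiction in each case, using the fact that $\uparrow_i(c)$ is a cover of $c$ together with Lemma~\ref{lemmeutilepartout}. Recall that by definition $\uparrow_i(c)$ is obtained from $c$ by increasing only the $i$-th component from $c_i$ to some value $\uparrow c_i$ with $c_i < \uparrow c_i$, and this is required to be a covering relation $c \lessdot \uparrow_i(c)$ in $(\cc, \Leq)$. The key tension to exploit is that the value $0$ is "absorbing" in the sense of Lemma~\ref{lemmeutilepartout}: replacing the $i$-th component by $0$ always yields a cubic coordinate. So if an increment from $c_i$ to $\uparrow c_i$ were to skip past $0$, the tuple obtained by replacing that component with $0$ would be a cubic coordinate strictly between $c$ and $\uparrow_i(c)$, contradicting that $\uparrow_i(c)$ covers $c$.

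First I would prove (i): suppose $c_i < 0$ but $\uparrow c_i > 0$, so that $c_i < 0 < \uparrow c_i$. Let $c''$ be the $(n-1)$-tuple agreeing with $c$ (equivalently with $\uparrow_i(c)$) in every component $j \ne i$ and with $c''_i = 0$. Since $c_i \ne 0$, Lemma~\ref{lemmeutilepartout} applied to $c$ gives that $c''$ is a cubic coordinate. Moreover $c_i < c''_i = 0 < \uparrow c_i$ and all other components coincide, so $c \Leq c'' \Leq \uparrow_i(c)$ with $c'' \ne c$ and $c'' \ne \uparrow_i(c)$. This contradicts the covering relation $c \lessdot \uparrow_i(c)$. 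Hence $\uparrow c_i \le 0$.

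For (ii), suppose $c_i \ge 0$ but $\uparrow c_i \le 0$; since $c_i < \uparrow c_i$ this forces $c_i < \uparrow c_i \le 0$, so in particular $c_i < 0$, contradicting $c_i \ge 0$. So strictly this direction needs no appeal to Lemma~\ref{lemmeutilepartout} at all — it is immediate from $c_i < \uparrow c_i$. If one instead reads the statement as allowing $c_i \ge 0$ and merely wants $\uparrow c_i > 0$ rather than $\uparrow c_i \ge 0$: from $c_i < \uparrow c_i$ and $c_i \ge 0$ we get $\uparrow c_i > c_i \ge 0$, hence $\uparrow c_i > 0$ directly. Either way, (ii) is purely a consequence of the defining inequality $c_i < \uparrow c_i$.

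The only real content is (i), and its single subtlety — the point I expect to need the most care — is checking that the intermediate tuple $c''$ genuinely satisfies $c \Leq c''$ and $c'' \Leq \uparrow_i(c)$ as elements of the cubic coordinate poset, and that it is distinct from both endpoints; this is where the hypotheses $c_i < 0$ (so $c_i \ne 0$, enabling Lemma~\ref{lemmeutilepartout}) and $\uparrow c_i > 0$ (so $0$ lies strictly between) are both used. Everything else is bookkeeping with the componentwise order $\Leq$.
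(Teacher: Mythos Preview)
Your proof is correct and follows essentially the same approach as the paper: for (i) you argue by contradiction using Lemma~\ref{lemmeutilepartout} to produce the intermediate cubic coordinate with $i$-th component $0$, violating the covering relation, and for (ii) you observe it is immediate from $c_i < \uparrow c_i$.
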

\smallbreak

\begin{proof}
Let us show the first implication, the second being obvious because the covering map always strictly increases a component.
Let $c_i < 0$, and let $c'$ be the $(n-1)$-tuple such that $c'_i = 0$ and $c'_j = c_j$ for any $j\ne i$, with $j \in[n-1]$.
By Lemma~\ref{lemmeutilepartout}, $c'$ is a cubic coordinate.
As $c \leq c'$ and they differ only at the $i$-th component, by the definition of $\uparrow_i(c)$, we have $c \leq \uparrow_i(c) \leq c'$, thus $\uparrow c_i \leq c'_i = 0$.
\end{proof}
\smallbreak

Let $c \in \cc$. For all $i \in [n]$, let 
\begin{equation}\label{coverRtoL}
\Uparrow_{i} (c) := \uparrow_{i}(\uparrow_{i+1}\dots (\uparrow_{n-1}(\uparrow_{n}(c)))),
\end{equation}
with the convention that $\uparrow_{n}(c) := c$.
For instance, for $c \in \ccc(5)$, $\Uparrow_{2} (c) = \uparrow_{2}(\uparrow_{3}(\uparrow_{4}(\uparrow_{5}(c))))$.

\begin{Lemma}\label{existeMax}
Let $n \geq 0$ and $c \in \OutputWings(\cc)$. For all $i\in[n]$, $\Uparrow_{i}(c)$ is a cubic coordinate.
\end{Lemma}
\smallbreak

\begin{proof}
For $i = n$, one has by convention that $\Uparrow_{n} (c)$ is a cubic coordinate. Let us suppose that for $i\in[n-1]$, $\Uparrow_{i+1} (c)$ is a cubic coordinate, and let us show that $\Uparrow_{i} (c)$ is also a cubic coordinate.
Depending on the sign of $\Uparrow_{i+1} (c)_i$, two cases are possible.
\smallbreak

Suppose that $\Uparrow_{i+1} (c)_i < 0$.
In this case, consider $c'$ the $(n-1)$-tuple obtained from $\Uparrow_{i+1} (c)$ by replacing the component $\Uparrow_{i+1} (c)_i$ by $0$. By Lemma~\ref{lemmeutilepartout}, $c'$ is a cubic coordinate.
Since $\Uparrow_{i+1} (c)_i < 0$ one has $\Uparrow_{i+1} (c) \Leq c'$.
If $c'$ covers $\Uparrow_{i+1} (c)$, then $c' = \Uparrow_{i} (c)$. Otherwise, it is always possible to find another cubic coordinate $c''$ between $\Uparrow_{i+1} (c)$ and $c'$ such that $c'' = \Uparrow_{i} (c)$.
In both cases, $\Uparrow_{i} (c)$ is a cubic coordinate.
\smallbreak

Suppose that $\Uparrow_{i+1} (c)_i \geq 0$. Let us set $(u,v) := \phi(c)$, and $(x,y) := \phi(\Uparrow_{i+1} (c))$. Since $u_i$ is not changed yet in $x$, one has $x_{i} = u_{i}$. Due to Condition~\ref{condDT2+} of a Tamari diagram and the compatibility condition, there are two configurations, involving indices, which can make contradiction with the fact that $(x,y)$ is still a Tamari interval diagram when $x_{i}$ becomes $\uparrow x_i$.
\begin{enumerate}[label=({\arabic*})]
\item \label{existeMax1} If there is an index $j$ such that $1\leq i < j \leq n$ and $y_j \geq j - i$ in $y$, then, since $y_j < v_j$, one has $v_j \geq j - i$ in $v$. By the compatibility condition, that implies $u_i < j - i$ in $u$. Moreover, since $c$ is assumed to be an output-wing, $u_i < j - i -1$ in $u$, so that $u_i$ can be increased. This inequality remains true in $x$.

\item \label{existeMax2} If there is an index $h$ such that $1\leq i-h \leq u_h$, by Condition~\ref{condDT2+} of a Tamari diagram, $u_i \leq u_h - i + h$ in $u$. This remains true in $x$ because components with index smaller than $i$ remain unchanged between $c$ and $\Uparrow_{i+1} (c)$. Furthermore, since $c$ is an output-wing, then $u_i < u_h - i + h$. This inequality remains true for $\Uparrow_{i+1} (c)$.

\end{enumerate}
\smallbreak

With these two configurations, let us build a cubic coordinate $c'$ different from $\Uparrow_{i+1} (c)$ only for $\Uparrow_{i+1} (c)_i$, depending on which choices are available to increase $u_i$. Let us set $(u',v') := \phi(c')$.
\smallbreak

\begin{enumerate}[label=({\alph*})]
\item \label{casa} Suppose there is a $j$ satisfying~\ref{existeMax1}, and there is no $h$ satisfying~\ref{existeMax2} in $\Uparrow_{i+1} (c)$. In this case, by choosing the minimal index $j$ such that~\ref{existeMax1} holds, we set $u'_i := j - i - 1$ in $c'$. Thus, $u'_i$ is also minimized, and since $u'_i < j-i$, the compatibility condition is satisfied in $c'$. Furthermore, since $\Uparrow_{i+1} (c)$ is assumed to be a cubic coordinate, all conditions in a Tamari diagram and a dual Tamari diagram are satisfied for $c'$. Therefore, our candidate $c'$ is a cubic coordinate. Note that in the construction of $c'$, other possible not minimal $j$ satisfying~\ref{existeMax1} will not cause any problem.

\item \label{casb} Suppose there is an $h$ satisfying~\ref{existeMax2}, and there is no $j$ satisfying~\ref{existeMax1} in $\Uparrow_{i+1} (c)$. Then, by choosing the minimal index $h$ such that~\ref{existeMax2} holds, we set $u'_i := u'_h - i + h$. Therefore, Condition~\ref{condDT2+} of a Tamari diagram is satisfied for $u'$. Also, by Condition~\ref{condDT1+} of a Tamari diagram, $u'_h \leq n - h$ which implies $u'_i \leq n -i$. Finally, the compatibility condition is also satisfied because it was assumed that there was no $j$ satisfying~\ref{existeMax1}. The tuple $c'$ is thus a cubic coordinate. As for the previous case, other possible not minimal $h$ satisfying~\ref{existeMax2} will not cause any problem.

\item \label{casc} Suppose there is a $j$ and an $h$ satisfying~\ref{existeMax1} and~\ref{existeMax2} in $\Uparrow_{i+1} (c)$. In this case, we set $u'_i := \min \{ u'_h - i + h,~ j - i - 1\}$. By the two previous cases, the tuple $c'$ is a cubic coordinate.

\item \label{casd} Otherwise, we set $u'_i := n - i$. The tuple $c'$ is a cubic coordinate.
\end{enumerate}
\smallbreak


In any case, for $u'_i$ fixed in $c'$, either $c'$ covers $\Uparrow_{i+1} (c)$, and so $c' = \Uparrow_{i} (c)$, or there is a cubic coordinate $c''$ between $\Uparrow_{i+1} (c)$ and $c'$ such that $c'' = \Uparrow_{i} (c)$. In both cases, $\Uparrow_{i} (c)$ is a cubic coordinate, and differs by only one component from $c'$.
\end{proof}
\smallbreak

Let $n \geq 0$ and $c \in \OutputWings(\cc)$. The cubic coordinate $\Uparrow_{1} (c)$ is the \Def{corresponding input-wing} of $c$ (the name comes from a corollary of Theorem~\ref{nombredesommetdanscellule}).
For instance $c = (0,-1,1,-1,-5,0,1,-1,-3)$ is an output-wing, and its corresponding input-wing is $\Uparrow_{1} (c) = (1,0,2,0,-4,3,2,0,-2)$.
By Lemma~\ref{existeMax} such an element does exist. Note that performing the covering map on $c$ in a different order than the one prescribed by~\eqref{coverRtoL} does not always result in the corresponding input-wing. This observation can already be made on the two pentagons of Figure~\ref{RealCube}.
\smallbreak

\subsection{Cells and synchronized cubic coordinates}
In Figure~\ref{RealCube} and Figure~\ref{DessinCC4}, we notice that a "cellular" organization appears. Thanks to the cubic coordinates, a combinatorial definition of these cells is provided. The aim is to have a better understanding of the realization of the cubic coordinate posets as a geometrical object.
\smallbreak

For any $n \geq 0$, let $c, c' \in \cc$ such that $c \Leq c'$.
A \Def{cell} is the set of points
\begin{equation}
    \Angle{c, c'} := \Bra{x \in \R^{n-1} : c_i \leq x_i \leq c'_i \mbox{ for all } i \in [n-1]}.
\end{equation}
By the definition, a cell is an orthotope, that is, a parallelotope whose edges are all mutually orthogonal or parallel. The \Def{dimension} $\dim\Angle{c, c'}$ of a
cell $\Angle{c, c'}$ is its dimension as an orthotope and it satisfies $\dim \Angle{c, c'} =
\# \DiffIndices(c, c')$, where $\DiffIndices(c, c') := \{ d~:~c_d \neq c'_d \}$. 
\smallbreak

From now on, we denote by $c^{\Outp}$ any output-wing and by $c^{\Inp}$ its corresponding input-wing. Any particular cell $\langle c^{\Outp},c^{\Inp} \rangle$ formed by an output-wing and by its corresponding input-wing is called a \Def{cell-wing}.
\smallbreak

A consequence of Lemma~\ref{lemmequifaitgagnerdutps} is that for any cell-wing $\langle c^{\Outp},c^{\Inp} \rangle$ of dimension $n - 1$, for all $i\in [n-1]$, 
\begin{enumerate}[label=(\roman*)]
\item if $c^{\Outp}_{i}<0$, then $c^{\Inp}_{i}\leq 0$,
\item if $c^{\Outp}_{i}\geq 0$, then $c^{\Inp}_{i}>0$.
\end{enumerate}
\smallbreak

\begin{Theorem}\label{nombredesommetdanscellule}
Let $n \geq 1$ and $\langle c^{\Outp},c^{\Inp} \rangle$ be a cell-wing of dimension $n-1$, and $c$ be a $(n-1)$-tuple such that for all $i\in [n-1]$, the component $c_i$ is equal either to $c^{\Outp}_i$ or to $c^{\Inp}_i$. Then $c$ is a cubic coordinate.
\end{Theorem}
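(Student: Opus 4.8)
The plan is to prove that $\phi(c)=(u,v)$ is a Tamari interval diagram; then $c\in\cc$ follows from the very definition of a cubic coordinate. Write $(u^{m},v^{m}):=\phi(c^{m})$ and $(u^{M},v^{M}):=\phi(c^{M})$. From $c^{m}\Leq c^{M}$ we get $u^{m}_{k}\leq u^{M}_{k}$ and $v^{m}_{k}\geq v^{M}_{k}$ for every $k$, and the sign property of a cell (the consequence of Lemma~\ref{lemmequifaitgagnerdutps} recalled just after the definition of a cell) forces each component of $c$ to be either $c_{i}=c^{m}_{i}$, whence $(u_{i},v_{i+1})=(u^{m}_{i},v^{m}_{i+1})$, or $c_{i}=c^{M}_{i}$, whence $(u_{i},v_{i+1})=(u^{M}_{i},v^{M}_{i+1})$; in particular $u_{i}\in\{u^{m}_{i},u^{M}_{i}\}$ and $v_{j}\in\{v^{m}_{j},v^{M}_{j}\}$. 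I would moreover use the intermediate coordinates $d^{(k)}:=\uparrow_{k}(\uparrow_{k+1}(\cdots\uparrow_{n-1}(c^{m})\cdots))$ for $k\in[n-1]$, which are well-defined cubic coordinates by iterating Lemma~\ref{existeMax}: by construction $u(d^{(k)})$ coincides with $u^{m}$ on the positions $[1,k-1]$ and with $u^{M}$ on $[k,n-1]$, while $v(d^{(k)})$ coincides with $v^{m}$ on $[2,k]$ and with $v^{M}$ on $[k+1,n]$.

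Condition~\ref{condDT1} of a Tamari diagram and condition~\ref{condDTD1} of a dual Tamari diagram are immediate, since each letter of $u$ (resp.\ $v$) is a letter of $u^{m}$ or of $u^{M}$ (resp.\ of $v^{m}$ or of $v^{M}$). For condition~\ref{condDT2} of a Tamari diagram, fix $i<i'$ with $i'-i\leq u_{i}$; if $i'=n$ there is nothing to check, so assume $i'\leq n-1$. If $c_{i}=c^{M}_{i}$, then $u_{i}=u^{M}_{i}$ and condition~\ref{condDT2} for $c^{M}$ gives $u_{i'}\leq u^{M}_{i'}\leq u^{M}_{i}-(i'-i)=u_{i}-(i'-i)$. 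If $c_{i}=c^{m}_{i}$ and $c_{i'}=c^{m}_{i'}$, the same works with $c^{m}$. If $c_{i}=c^{m}_{i}$ while $c_{i'}=c^{M}_{i'}$, I would invoke condition~\ref{condDT2} for the cubic coordinate $d^{(i+1)}$, whose $u$-part takes the value $u^{m}_{i}=u_{i}$ at position $i$ and $u^{M}_{i'}=u_{i'}$ at position $i'$; this yields $u_{i'}\leq u_{i}-(i'-i)$. Condition~\ref{condDTD2} of a dual Tamari diagram is treated symmetrically, the crossing subcase --- position $i-1$ read from $c^{M}$, position $i'-1$ read from $c^{m}$, with $i'<i$ --- being handled by condition~\ref{condDTD2} for $d^{(i-1)}$.

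It remains to verify compatibility. Fix $i<j$ with $u_{i}\geq j-i$; then $u_{i}\geq 1$, hence $c_{i}>0$, so the sign property gives $c^{m}_{i}\geq 0$ and $c^{M}_{i}\geq 1$. We must show $v_{j}<j-i$. If $c_{j-1}=c^{M}_{j-1}$, then $v_{j}=v^{M}_{j}$ and, as $u^{M}_{i}\geq u_{i}\geq j-i$, compatibility of $c^{M}$ gives $v^{M}_{j}<j-i$. If $c_{j-1}=c^{m}_{j-1}$ and $c_{i}=c^{m}_{i}$, compatibility of $c^{m}$ applied to $u^{m}_{i}=u_{i}\geq j-i$ gives $v^{m}_{j}<j-i$. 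If $j=i+1$, the pair $(u_{i},v_{i+1})$ is governed by the single component $c_{i}\geq 1$, so $v_{i+1}=0<1$. The only remaining possibility is $c_{i}=c^{M}_{i}$, $c_{j-1}=c^{m}_{j-1}$ with $j\geq i+2$; there $u_{i}=u^{M}_{i}$ and $v_{j}=v^{m}_{j}$, and I must rule out that $u^{M}_{i}\geq j-i$ and $v^{m}_{j}\geq j-i$ hold together --- equivalently, that the ``mixed corner'' $(u^{M},v^{m})$ is itself compatible (i.e.\ $S^{M}\Leq_{\circlearrowright}T^{m}$ for the trees attached to $c^{M}$ and $c^{m}$). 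This is where the order of construction of the corresponding maximal-cellular must enter: $c^{M}$ is obtained from $c^{m}$ by applying $\uparrow_{n-1},\uparrow_{n-2},\dots,\uparrow_{1}$ in that order, and since $j-1>i$ the step $\uparrow_{j-1}$ --- the only one that can lower $v_{j}$ --- is performed strictly before the step $\uparrow_{i}$ --- the only one that can raise $u_{i}$. Hence either $v_{j}$ still satisfies $v_{j}\geq j-i$ at the moment $\uparrow_{i}$ is applied to $d^{(i+1)}$, and then compatibility of $d^{(i)}=\uparrow_{i}(d^{(i+1)})$ already forbids $u_{i}\geq j-i$; or $v_{j}$ has already dropped below $j-i$, in which case the bound on the admissible raised values of $u_{i}$ extracted from the proof of Lemma~\ref{existeMax} (its case~(1) together with the explicit choices~(a)--(d), applied at positions no larger than $j$) keeps $u_{i}$ below $j-i$. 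Either way $u^{M}_{i}<j-i$, contradicting $u_{i}\geq j-i$; so $v_{j}=v^{m}_{j}<j-i$.

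Once the two conditions of a Tamari diagram, the two conditions of a dual Tamari diagram, and compatibility are checked, $(u,v)\in\dit$ and therefore $c\in\cc$. I expect the whole difficulty to lie in the last compatibility subcase above: one must control how far a letter $u_{i}$ can have been raised relative to a letter $v_{j}$ with $j>i$ during the formation of the corresponding maximal-cellular, and this requires precisely the kind of bookkeeping carried out in the proof of Lemma~\ref{existeMax}.
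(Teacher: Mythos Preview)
Your overall architecture coincides with the paper's: verify conditions~\ref{condDT1}--\ref{condDT2} for $u$, conditions~\ref{condDTD1}--\ref{condDTD2} for $v$, and compatibility, using the intermediate coordinates $d^{(k)}=\uparrow_{k}(\cdots\uparrow_{n-1}(c^{m})\cdots)$ to handle the ``mixed'' subcases. For the Tamari and dual Tamari conditions your argument is essentially identical to the paper's (the paper uses $d^{(i')}$ where you use $d^{(i+1)}$, but either works).

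The divergence is in the hard compatibility subcase $u_{i}=u^{M}_{i}$, $v_{j}=v^{m}_{j}$ with $j\geq i+2$. Your dichotomy on $v^{M}_{j}$ is not wrong, but the branch ``$v_{j}$ has already dropped below $j-i$'' is precisely the live case (indeed, if $u^{M}_{i}\geq j-i$ then compatibility of $d^{(i)}$ forces $v^{M}_{j}<j-i$), and there your appeal to ``case~(1) together with (a)--(d)'' of Lemma~\ref{existeMax} does not apply as stated: case~(1) of that proof is triggered only when $\downarrow v_{j}\geq j-i$, which is exactly what you have just assumed fails. So this branch remains a gap.

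The paper closes this case without the dichotomy, by exploiting the minimal-cellular hypothesis on $c^{m}$ directly. From $v^{m}_{j}\geq j-i$ one gets $u^{m}_{k}<j-k$ for every $k\in[i,j-1]$ by compatibility of $c^{m}$; then, because $\uparrow_{k}(c^{m})$ exists and is still compatible with $v^{m}_{j}$, these inequalities are in fact strict enough ($u^{m}_{k}<j-k-1$ at the relevant positions, and likewise the Tamari inequalities $u^{m}_{i+h}\leq u^{m}_{i}-h$ are strict). With these strict bounds in hand the paper argues that when $\uparrow_{i}$ is applied to $d^{(i+1)}$, the value $j-i-1$ is already an admissible increase of $u_{i}$, so the \emph{minimal} increase cannot overshoot it: $u^{M}_{i}\leq j-i-1<j-i$, contradicting $u^{M}_{i}\geq j-i$. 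That is the piece of ``bookkeeping'' you were anticipating; it lives in $c^{m}$ rather than in the internals of Lemma~\ref{existeMax}.
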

\smallbreak

\begin{proof}
If all the components of $c$ are equal to those of $c^{\Outp}$ (resp.\ to those of $c^{\Inp}$), then $c$ is a cubic coordinate. Suppose this is not the case, meaning that $c$ has components of $c^{\Outp}$ and $c^{\Inp}$.
\smallbreak

Let us denote $(u^{\Outp}_i,v^{\Outp}_{i+1})$ (resp.\ $(u^{\Inp}_i,v^{\Inp}_{i+1})$) the pair of letters corresponding to $c^{\Outp}_i$ (resp.\ $c^{\Inp}_i$) and $(u_i,v_{i+1})$ the one corresponding to $c_i$ for any $i\in [n-1]$. 
By hypothesis on $c^{\Outp}$ and $c^{\Inp}$ the letter $u_i$ which is equal to $u^{\Outp}_i$ or $u^{\Inp}_i$ satisfies $0\leq u_i\leq n-i$ for any $i\in[n]$. Similarly, the letter $v_i$ which is equal to $v^{\Outp}_i$ or $v^{\Inp}_i$ satisfies $0\leq v_i \leq i-1$ for any $i\in[n]$.
In order to show that $c$ is a cubic coordinate, let us prove that $u$ satisfies Condition~\ref{condDT2+} of a Tamari diagram, $v$ satisfies Condition~\ref{condDTD2+} of a dual Tamari diagram and $(u,v)$ satisfies the compatibility condition.
\smallbreak

\begin{enumerate}[label=({\roman*})]
\item Let us show that for any choice of letters $u_i$ and $u_{i+j}$ with $i \in [n]$ and $j \in [0,u_i]$ one has $u_{i+j} \leq u_i - j$.
\begin{itemize}
\item If $u_i$ and $u_{i+j}$ are equal respectively to $u^{\Outp}_i$ and to $u^{\Outp}_{i+j}$ (resp.\ to $u^{\Inp}_i$ and to $u^{\Inp}_{i+j}$), then Condition~\ref{condDT2+} of a Tamari diagram is satisfies because $c^{\Outp}$ (resp.\ $c^{\Inp}$) is a cubic coordinate. 

\item Suppose that $u_i = u^{\Inp}_i$ and $u_{i+j} = u^{\Outp}_{i+j}$. By the definition of $c^{\Inp}$ one has $u^{\Outp}_{i+j} < u^{\Inp}_{i+j}$. However $u^{\Inp}_{i+j} \leq u^{\Inp}_i -j$ because $c^{\Inp}$ is a cubic coordinate. Therefore, Condition~\ref{condDT2+} of a Tamari diagram is satisfied.

\item Suppose that $u_i = u^{\Outp}_i$ and $u_{i+j} = u^{\Inp}_{i+j}$. Let $c' := \Uparrow_{i+j} (c^{\Outp})$. 
According to Lemma~\ref{existeMax} $c'$ is a cubic coordinate such that $c'_i = u^{\Outp}_i$ and $c'_{i+j} = u^{\Inp}_{i+j}$. Since Condition~\ref{condDT2+} of a Tamari diagram is satisfied for $c'$, it must also be satisfied for $c$.
\end{itemize}

\item Condition~\ref{condDTD2+} of a dual Tamari diagram is satisfied with similar arguments given for the previous case, applied to the dual Tamari diagram $v$.

\item Rather than showing the compatibility condition as it is stated, let us show the contrapositive. That is, for every $1 \leq i < j \leq n$ such that $v_j \geq j - i$, let us show that $u_i < j-i$.
\begin{itemize}
\item Clearly, if $u_i$ and $v_j$ are equal to $u^{\Outp}_i$ and $v^{\Outp}_j$ (resp.\ to $u^{\Inp}_i$ and $v^{\Inp}_j$), then the compatibility condition is satisfied.

\item Suppose that $u_i = u^{\Outp}_i$ and $v_j = v^{\Inp}_j$. If $v^{\Inp}_j \geq j-i$, then for $c^{\Outp}$ one has $v^{\Outp}_j \geq j-i$ because $v^{\Inp}_j < v^{\Outp}_j$. Since $c^{\Outp}$ is a cubic coordinate, this implies that $u^{\Outp}_i < j-i$.

\item Suppose that $u_i = u^{\Inp}_i$ and $v_j = v^{\Outp}_j$. If $v^{\Outp}_j \geq j-i$, then for all $k \in [i, j-1]$, $u^{\Outp}_k < j-k$ because $c^{\Outp}$ is a cubic coordinate and then satisfies the compatibility condition. Moreover, since $c^{\Outp} \in \OutputWings(\cc)$ each component can be minimally increased independently of the others, thus  $u^{\Outp}_k < j-k-1$ for all $k \in [i, j-1]$. For the same reason $u_{i+h} < u_i - h$ for all $h \in [0, u_i]$.
These two reasons imply that if one builds the cubic coordinate $c' = \Uparrow_{i} (c^{\Outp})$, then by the definition of the covering map one has $c'_i = u'_i < j -i$, because at worst, the covering map sends $u^{\Outp}_i$ to $j - i - 1$ (we have already seen this in the proof of Lemma~\ref{existeMax}). 
However, by the definition of $c^{\Inp}$ one has $u^{\Inp}_i = u'_i$, that is $u^{\Inp}_i < j - i$. Therefore, the compatibility condition between $u^{\Inp}$ and $v^{\Outp}_j$ is satisfied for $c$.
\end{itemize}
\end{enumerate}
\smallbreak

Thus, for all choices of letters of $u$ and $v$ one has that $c$ is a cubic coordinate.
\end{proof}
\smallbreak

One of the direct consequences of Theorem~\ref{nombredesommetdanscellule} is that for every cell-wing $\langle c^{\Outp},c^{\Inp} \rangle$, at least $2^{n-1}$ cubic coordinates belong to this cell. 
\smallbreak

This theorem also implies that a corresponding input-wing covers $n-1$ cubic coordinates, and so is in particular an input-wing. 
\smallbreak

Moreover, due to the fact the Tamari interval lattice is self-dual, the number of output-wings is equal to the number of input-wings. Therefore, by Theorem~\ref{morphPoset}, an input-wing is always a corresponding input-wing of some output-wing. 
\smallbreak

Let $n\geq 0$, and $\epsilon \in \{-1,1\}^{n-1}$, and $c\in\cc$.
The \Def{$\epsilon$-region} of $c$ is the set
\begin{equation}\label{eqreg}
\mathcal{R}_{\epsilon}(c) := \{(x_1, \dots, x_{n-1})\in \mathbb{R}^{n-1} ~:~ x_i < c_i \mbox{ if } \epsilon_i = -1,~ x_i > c_i \mbox{ otherwise}\}.
\end{equation}
The cubic coordinate $c$ is \Def{external} if there is $\epsilon\in \{-1,1\}^{n-1}$ such that $\cc\cap\mathcal{R}_{\epsilon}(c) = \emptyset$. The $\epsilon$-region $\mathcal{R}_{\epsilon}(c)$ is then \Def{empty}. Otherwise, $c$ is \Def{internal}.
\smallbreak

\begin{Proposition}\label{intnn}
Let $n \geq 0$ and $c \in \cc$. If $c$ is internal, then $\phi(c)$ is a new Tamari interval diagram.
\end{Proposition}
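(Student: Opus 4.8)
The plan is to establish the contrapositive: if $\phi(c)=(u,v)$ is not a new Tamari interval diagram, then $c$ is external. Since $c\in\cc$, Theorem~\ref{bijDIT-CC} ensures $(u,v)$ is a genuine Tamari interval diagram, so "not new" means precisely that one of the defining conditions~\ref{ditnv1},~\ref{ditnv2},~\ref{ditnv3} is violated; I would treat these three situations separately. In every case the objective is to produce a sign vector $\epsilon\in\{-1,1\}^{n-1}$ with $\mathcal{R}_\epsilon(c)\cap\cc=\emptyset$, and I would exploit the obvious fact that it is enough for the constraint $\mathcal{R}_\epsilon(c)$ places on a \emph{single} coordinate to be unfulfillable by any cubic coordinate, the remaining entries of $\epsilon$ being free.

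Cases~\ref{ditnv1} and~\ref{ditnv2} should be quick. If~\ref{ditnv1} fails there is $i\in[n-1]$ with $u_i=n-i$; as $u_i\neq 0$, the compatibility consequence recalled earlier (for every $i$, $u_i$ and $v_{i+1}$ are not both nonzero) gives $v_{i+1}=0$, so $c_i=u_i-v_{i+1}=n-i$. But any cubic coordinate $c'$ satisfies $c'_i=u'_i-v'_{i+1}\leq u'_i\leq n-i$, so no cubic coordinate meets the requirement $x_i>c_i$; hence $\mathcal{R}_\epsilon(c)\cap\cc=\emptyset$ for every $\epsilon$ with $\epsilon_i=1$. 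The failure of~\ref{ditnv2} is symmetric: one gets $v_j=j-1$ for some $j\in[2,n]$, whence $u_{j-1}=0$ and $c_{j-1}=-(j-1)$, while $c'_{j-1}=u'_{j-1}-v'_j\geq -v'_j\geq -(j-1)$ for every cubic coordinate, so any $\epsilon$ with $\epsilon_{j-1}=-1$ yields an empty region.

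The genuinely delicate case is the failure of~\ref{ditnv3}: there are $k,l\in[n]$ with $k+1<l$, $u_k\geq l-k-1$ and $v_l\geq l-k-1$. I would first observe $l-k-1\geq 1$, so $u_k$ and $v_l$ are both nonzero, forcing $v_{k+1}=0=u_{l-1}$ and therefore $c_k=u_k\geq l-k-1$, $c_{l-1}=-v_l\leq -(l-k-1)$, with $k\neq l-1$. The key claim is that no cubic coordinate $c'$ can satisfy simultaneously $c'_k>c_k$ and $c'_{l-1}<c_{l-1}$. Indeed, $c'_k>c_k\geq l-k-1>0$ forces $u'_k=c'_k\geq l-k$ (integers), so the compatibility condition applied to the pair $(k,l)$ gives $v'_l<l-k$; on the other hand $c'_{l-1}<c_{l-1}\leq -(l-k-1)$ forces $c'_{l-1}\leq -(l-k)$, hence $v'_l=-c'_{l-1}\geq l-k$, a contradiction. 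Thus taking $\epsilon_k=1$, $\epsilon_{l-1}=-1$ and the other signs arbitrarily gives $\mathcal{R}_\epsilon(c)\cap\cc=\emptyset$, so $c$ is external. Collecting the three cases proves the contrapositive, hence the proposition. I expect the last case to be the main obstacle, mostly because of the integer bookkeeping needed to promote the strict component inequalities into the integer bounds $u'_k\geq l-k$ and $v'_l\geq l-k$ that make the compatibility condition bite; the first two cases are essentially immediate from the ranges of the letters of a (dual) Tamari diagram.
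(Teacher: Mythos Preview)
Your proof is correct and follows essentially the same approach as the paper: both prove the contrapositive and split into the three cases corresponding to the failure of conditions~\ref{ditnv1},~\ref{ditnv2},~\ref{ditnv3}, exhibiting in each case a sign vector $\epsilon$ for which $\mathcal{R}_\epsilon(c)$ contains no cubic coordinate. Your treatment of the third case is in fact more carefully written than the paper's, making explicit the integer promotions $u'_k\geq l-k$ and $v'_l\geq l-k$ that force the violation of compatibility, and correctly stating the negation of~\ref{ditnv3} with inequalities rather than equalities.
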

\smallbreak

\begin{proof}
Instead, let us show that if $\phi(c)$ is not new, then $c$ is external. Let us denote $(u_{i},v_{i+1})$ the pair of letters corresponding to $c_{i}$ by the map $\phi$ for $i\in[n-1]$. 
\smallbreak

Tamari interval diagram $\phi(c)$ is not new if there is 
\begin{enumerate}[label={(\arabic*)}]
\item\label{ccnn1} either $i\in [n-1]$ such that $u_{i} = n-i$,
\item\label{ccnn2} or $j\in [2,n]$ such that $v_{j} = j - 1$,
\item\label{ccnn3} or $k,l\in[n]$ such that $u_{k} = l-k-1$ and $v_{l} = l-k-1$ with $k+1 <l$.
\end{enumerate}
Suppose there is some $i$ satisfying~\ref{ccnn1}, then there cannot be a cubic coordinate $c'$ such that $c'_{i} > c_{i}$ because, by the definition of a Tamari diagram, $c'_{i}\leq n-i$.
Similarly, if we assume that there is $j$ satisfying~\ref{ccnn2}, then there cannot be a cubic coordinate $c'$ such that $c'_{j-1}<c_{j-1}$ because by the definition of a dual Tamari diagram, $c'_{j-1}\geq 1-j$.
If~\ref{ccnn3} is satisfied, then there cannot be a cubic coordinate $c'$ such that $c'_{k}>c_{k}$ and $c'_{l-1}<c_{l-1}$. Indeed, if the letters $u_{k}$ and $v_{l}$ are increased in $c$, then the compatibility condition is contradicted, so the result cannot be a cubic coordinate.
Since in each case at least one $\epsilon$-region is empty, $c$ is external.
\end{proof}
\smallbreak

\begin{Proposition}\label{synext}
Let $n \geq 0$ and $c\in\ccs$. Then $c$ is external.
\end{Proposition}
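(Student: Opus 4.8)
The plan is to deduce the statement directly from Proposition~\ref{intnn} and Proposition~\ref{synnn}, which have just been proved. The only preliminary point is to translate the word \emph{synchronized} across the bijection $\phi$: if $c\in\cc$ and $(u,v):=\phi(c)$, then $c_i>0$ exactly when $u_i\ne 0$ (and then $v_{i+1}=0$), while $c_i<0$ exactly when $v_{i+1}\ne 0$ (and then $u_i=0$); hence "$c_i\ne 0$ for all $i\in[n-1]$" is literally the same statement as "$u_i\ne 0$ or $v_{i+1}\ne 0$ for all $i\in[n-1]$". So $c\in\ccs$ if and only if $\phi(c)$ is a synchronized Tamari interval diagram.

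Now let $c\in\ccs$ with $n\geq 3$ and put $(u,v):=\phi(c)$. By the remark above $(u,v)$ is synchronized, so Proposition~\ref{synnn} gives that $(u,v)$ is not a new Tamari interval diagram. On the other hand, the contrapositive of Proposition~\ref{intnn} reads: if $\phi(c)$ is not a new Tamari interval diagram, then $c$ is not internal. Since \emph{external} is by definition the negation of \emph{internal}, we conclude that $c$ is external, which is the assertion. The remaining sizes $n\leq 2$ (for which a new diagram is not defined) are settled by inspection: $\mathsf{CC}_{\mathrm{s}}(2)=\{(1),(-1)\}$ while $\mathsf{CC}(2)=\{(-1),(0),(1)\}$, so $(1)$ is external with witness $\epsilon=(1)$ and $(-1)$ is external with witness $\epsilon=(-1)$, and for $n\leq 1$ there is no non-degenerate content.

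I do not expect a genuine obstacle here: the whole mechanism has been packaged into Propositions~\ref{intnn} and~\ref{synnn}, and what is left is merely to line up their ranges of validity ($n\geq 3$) and to recall that "not internal" means "external". If instead one wanted a self-contained proof, the natural move would be to take the sign vector $\epsilon\in\{-1,1\}^{n-1}$ with $\epsilon_i=1$ when $c_i>0$ and $\epsilon_i=-1$ when $c_i<0$, and to show $\cc\cap\mathcal{R}_{\epsilon}(c)=\emptyset$; the work there is to show that a hypothetical $c'$ in that region, with $(u',v'):=\phi(c')$, would be forced to have $u'_i>u_i$ at every position where $u_i\ne 0$ and $v'_j>v_j$ at every position where $v_j\ne 0$ simultaneously (the zero positions being preserved), which is incompatible with conditions~\ref{condDT1},~\ref{condDTD1} and the compatibility condition. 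This is exactly the phenomenon abstracted by Propositions~\ref{intnn} and~\ref{synnn}, so the short route above is the one I would take.
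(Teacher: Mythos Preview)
Your proof is correct and follows exactly the same route as the paper's own argument, namely combining the contrapositive of Proposition~\ref{intnn} with Proposition~\ref{synnn}. You are in fact slightly more careful than the paper, since you explicitly handle the small cases $n\leq 2$ where Proposition~\ref{synnn} is not stated.
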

\smallbreak

\begin{proof}
By Proposition~\ref{synnn} we know that if $c$ is synchronized, then $\phi(c)$ is not new. Now, we just saw from Proposition~\ref{intnn} that if $\phi(c)$ is not new, then $c$ is external.
\end{proof}
\smallbreak

\begin{figure}[h!]
\centering
    \scalebox{0.7}{
\begin{tikzpicture}[Centering,xscale=.7,yscale=.7,
    x={(0,-1cm)}, y={(-2cm,-2cm)}, z={(3cm,-3cm)}]
    \foreach \x in {-1,...,3} {
        \foreach \y in {-2,...,2} {
            \foreach \z in {-3,...,1} {
                \draw[LineGrid](0,\y,\z)--(\x,\y,\z);
                \draw[LineGrid](\x,0,\z)--(\x,\y,\z);
                \draw[LineGrid](\x,\y,0)--(\x,\y,\z);
            }
        }
    }
\draw[EdgeGraph](0, 0, 0)--(1, 0, 0);
\draw[EdgeGraph](0, 0, 0)--(0, 1, 0);
\draw[EdgeGraph](0, 0, 0)--(0, 0, 1);
\draw[EdgeGraph](0, 0, -1)--(1, 0, -1);
\draw[EdgeGraph](0, 0, -1)--(0, 1, -1);
\draw[EdgeGraph](0, 0, -1)--(0, 0, 0);
\draw[EdgeGraph](0, 0, -2)--(1, 0, -2);
\draw[EdgeGraph](0, 0, -2)--(0, 1, -2);
\draw[EdgeGraph](0, 0, -2)--(0, 0, -1);
\draw[EdgeGraph](0, 0, -3)--(1, 0, -3);
\draw[EdgeGraph](0, 0, -3)--(0, 1, -3);
\draw[EdgeGraph](0, 0, -3)--(0, 0, -2);
\draw[EdgeGraph](0, -1, 0)--(1, -1, 0);
\draw[EdgeGraph](0, -1, 0)--(0, 0, 0);
\draw[EdgeGraph](0, -1, 0)--(0, -1, 1);
\draw[EdgeGraph](0, -1, -2)--(1, -1, -2);
\draw[EdgeGraph](0, -1, -2)--(0, 0, -2);
\draw[EdgeGraph](0, -1, -2)--(0, -1, 0);
\draw[EdgeGraph](0, -1, -3)--(1, -1, -3);
\draw[EdgeGraph](0, -1, -3)--(0, 0, -3);
\draw[EdgeGraph](0, -1, -3)--(0, -1, -2);
\draw[EdgeGraph](0, -2, 0)--(1, -2, 0);
\draw[EdgeGraph](0, -2, 0)--(0, -1, 0);
\draw[EdgeGraph](0, -2, 0)--(0, -2, 1);
\draw[EdgeGraph](0, -2, -3)--(1, -2, -3);
\draw[EdgeGraph](0, -2, -3)--(0, -1, -3);
\draw[EdgeGraph](0, -2, -3)--(0, -2, 0);
\draw[EdgeGraph](-1, 0, 0)--(0, 0, 0);
\draw[EdgeGraph](-1, 0, 0)--(-1, 1, 0);
\draw[EdgeGraph](-1, 0, 0)--(-1, 0, 1);
\draw[EdgeGraph](-1, 0, -1)--(0, 0, -1);
\draw[EdgeGraph](-1, 0, -1)--(-1, 1, -1);
\draw[EdgeGraph](-1, 0, -1)--(-1, 0, 0);
\draw[EdgeGraph](-1, 0, -3)--(0, 0, -3);
\draw[EdgeGraph](-1, 0, -3)--(-1, 1, -3);
\draw[EdgeGraph](-1, 0, -3)--(-1, 0, -1);
\draw[EdgeGraph](-1, -2, 0)--(0, -2, 0);
\draw[EdgeGraph](-1, -2, 0)--(-1, 0, 0);
\draw[EdgeGraph](-1, -2, 0)--(-1, -2, 1);
\draw[EdgeGraph](-1, -2, -3)--(0, -2, -3);
\draw[EdgeGraph](-1, -2, -3)--(-1, 0, -3);
\draw[EdgeGraph](-1, -2, -3)--(-1, -2, 0);
\draw[EdgeGraph](0, 0, 1)--(1, 0, 1);
\draw[EdgeGraph](0, 0, 1)--(0, 2, 1);
\draw[EdgeGraph](0, -1, 1)--(1, -1, 1);
\draw[EdgeGraph](0, -1, 1)--(0, 0, 1);
\draw[EdgeGraph](0, -2, 1)--(1, -2, 1);
\draw[EdgeGraph](0, -2, 1)--(0, -1, 1);
\draw[EdgeGraph](-1, 0, 1)--(0, 0, 1);
\draw[EdgeGraph](-1, 0, 1)--(-1, 2, 1);
\draw[EdgeGraph](-1, -2, 1)--(0, -2, 1);
\draw[EdgeGraph](-1, -2, 1)--(-1, 0, 1);
\draw[EdgeGraph](0, 1, 0)--(2, 1, 0);
\draw[EdgeGraph](0, 1, 0)--(0, 2, 0);
\draw[EdgeGraph](0, 1, -1)--(2, 1, -1);
\draw[EdgeGraph](0, 1, -1)--(0, 2, -1);
\draw[EdgeGraph](0, 1, -1)--(0, 1, 0);
\draw[EdgeGraph](0, 1, -2)--(2, 1, -2);
\draw[EdgeGraph](0, 1, -2)--(0, 1, -1);
\draw[EdgeGraph](0, 1, -3)--(2, 1, -3);
\draw[EdgeGraph](0, 1, -3)--(0, 1, -2);
\draw[EdgeGraph](-1, 1, 0)--(0, 1, 0);
\draw[EdgeGraph](-1, 1, 0)--(-1, 2, 0);
\draw[EdgeGraph](-1, 1, -1)--(0, 1, -1);
\draw[EdgeGraph](-1, 1, -1)--(-1, 2, -1);
\draw[EdgeGraph](-1, 1, -1)--(-1, 1, 0);
\draw[EdgeGraph](-1, 1, -3)--(0, 1, -3);
\draw[EdgeGraph](-1, 1, -3)--(-1, 1, -1);
\draw[EdgeGraph](0, 2, 0)--(3, 2, 0);
\draw[EdgeGraph](0, 2, 0)--(0, 2, 1);
\draw[EdgeGraph](0, 2, -1)--(3, 2, -1);
\draw[EdgeGraph](0, 2, -1)--(0, 2, 0);
\draw[EdgeGraph](-1, 2, 0)--(0, 2, 0);
\draw[EdgeGraph](-1, 2, 0)--(-1, 2, 1);
\draw[EdgeGraph](-1, 2, -1)--(0, 2, -1);
\draw[EdgeGraph](-1, 2, -1)--(-1, 2, 0);
\draw[EdgeGraph](0, 2, 1)--(3, 2, 1);
\draw[EdgeGraph](-1, 2, 1)--(0, 2, 1);
\draw[EdgeGraph](1, 0, 0)--(2, 0, 0);
\draw[EdgeGraph](1, 0, 0)--(1, 0, 1);
\draw[EdgeGraph](1, 0, -1)--(2, 0, -1);
\draw[EdgeGraph](1, 0, -1)--(1, 0, 0);
\draw[EdgeGraph](1, 0, -2)--(2, 0, -2);
\draw[EdgeGraph](1, 0, -2)--(1, 0, -1);
\draw[EdgeGraph](1, 0, -3)--(2, 0, -3);
\draw[EdgeGraph](1, 0, -3)--(1, 0, -2);
\draw[EdgeGraph](1, -1, 0)--(2, -1, 0);
\draw[EdgeGraph](1, -1, 0)--(1, 0, 0);
\draw[EdgeGraph](1, -1, 0)--(1, -1, 1);
\draw[EdgeGraph](1, -1, -2)--(2, -1, -2);
\draw[EdgeGraph](1, -1, -2)--(1, 0, -2);
\draw[EdgeGraph](1, -1, -2)--(1, -1, 0);
\draw[EdgeGraph](1, -1, -3)--(2, -1, -3);
\draw[EdgeGraph](1, -1, -3)--(1, 0, -3);
\draw[EdgeGraph](1, -1, -3)--(1, -1, -2);
\draw[EdgeGraph](1, -2, 0)--(1, -1, 0);
\draw[EdgeGraph](1, -2, 0)--(1, -2, 1);
\draw[EdgeGraph](1, -2, -3)--(1, -1, -3);
\draw[EdgeGraph](1, -2, -3)--(1, -2, 0);
\draw[EdgeGraph](1, 0, 1)--(3, 0, 1);
\draw[EdgeGraph](1, -1, 1)--(3, -1, 1);
\draw[EdgeGraph](1, -1, 1)--(1, 0, 1);
\draw[EdgeGraph](1, -2, 1)--(1, -1, 1);
\draw[EdgeGraph](2, 0, 0)--(3, 0, 0);
\draw[EdgeGraph](2, 0, 0)--(2, 1, 0);
\draw[EdgeGraph](2, 0, -1)--(3, 0, -1);
\draw[EdgeGraph](2, 0, -1)--(2, 1, -1);
\draw[EdgeGraph](2, 0, -1)--(2, 0, 0);
\draw[EdgeGraph](2, 0, -2)--(3, 0, -2);
\draw[EdgeGraph](2, 0, -2)--(2, 1, -2);
\draw[EdgeGraph](2, 0, -2)--(2, 0, -1);
\draw[EdgeGraph](2, 0, -3)--(2, 1, -3);
\draw[EdgeGraph](2, 0, -3)--(2, 0, -2);
\draw[EdgeGraph](2, -1, 0)--(3, -1, 0);
\draw[EdgeGraph](2, -1, 0)--(2, 0, 0);
\draw[EdgeGraph](2, -1, -2)--(3, -1, -2);
\draw[EdgeGraph](2, -1, -2)--(2, 0, -2);
\draw[EdgeGraph](2, -1, -2)--(2, -1, 0);
\draw[EdgeGraph](2, -1, -3)--(2, 0, -3);
\draw[EdgeGraph](2, -1, -3)--(2, -1, -2);
\draw[EdgeGraph](2, 1, 0)--(3, 1, 0);
\draw[EdgeGraph](2, 1, -1)--(3, 1, -1);
\draw[EdgeGraph](2, 1, -1)--(2, 1, 0);
\draw[EdgeGraph](2, 1, -2)--(3, 1, -2);
\draw[EdgeGraph](2, 1, -2)--(2, 1, -1);
\draw[EdgeGraph](2, 1, -3)--(2, 1, -2);
\draw[EdgeGraph](3, 0, 0)--(3, 1, 0);
\draw[EdgeGraph](3, 0, 0)--(3, 0, 1);
\draw[EdgeGraph](3, 0, -1)--(3, 1, -1);
\draw[EdgeGraph](3, 0, -1)--(3, 0, 0);
\draw[EdgeGraph](3, 0, -2)--(3, 1, -2);
\draw[EdgeGraph](3, 0, -2)--(3, 0, -1);
\draw[EdgeGraph](3, -1, 0)--(3, 0, 0);
\draw[EdgeGraph](3, -1, 0)--(3, -1, 1);
\draw[EdgeGraph](3, -1, -2)--(3, 0, -2);
\draw[EdgeGraph](3, -1, -2)--(3, -1, 0);
\draw[EdgeGraph](3, 0, 1)--(3, 2, 1);
\draw[EdgeGraph](3, -1, 1)--(3, 0, 1);
\draw[EdgeGraph](3, 1, 0)--(3, 2, 0);
\draw[EdgeGraph](3, 1, -1)--(3, 2, -1);
\draw[EdgeGraph](3, 1, -1)--(3, 1, 0);
\draw[EdgeGraph](3, 1, -2)--(3, 1, -1);
\draw[EdgeGraph](3, 2, 0)--(3, 2, 1);
\draw[EdgeGraph](3, 2, -1)--(3, 2, 0);
\node[NodeGraph](000)at(0,0,0){};
\node[NodeGraph]at(0,0,-1){};
\node[NodeGraph]at(0,0,-2){};
\node[NodeGraph]at(0,0,-3){};
\node[NodeGraph]at(0,-1,0){};
\node[NodeGraph]at(0,-1,-2){};
\node[NodeGraph]at(0,-1,-3){};
\node[NodeGraph]at(0,-2,0){};
\node[NodeGraph]at(0,-2,-3){};
\node[NodeGraph]at(-1,0,0){};
\node[NodeGraph]at(-1,0,-1){};
\node[NodeGraph](-10-3)at(-1,0,-3){};
\node[NodeGraph](-1-20)at(-1,-2,0){};
\node[NodeGraph](-1-2-3)at(-1,-2,-3){};
\node[NodeGraph]at(0,0,1){};
\node[NodeGraph]at(0,-1,1){};
\node[NodeGraph]at(0,-2,1){};
\node[NodeGraph]at(-1,0,1){};
\node[NodeGraph]at(-1,-2,1){};
\node[NodeGraph]at(0,1,0){};
\node[NodeGraph]at(0,1,-1){};
\node[NodeGraph]at(0,1,-2){};
\node[NodeGraph]at(0,1,-3){};
\node[NodeGraph]at(-1,1,0){};
\node[NodeGraph]at(-1,1,-1){};
\node[NodeGraph]at(-1,1,-3){};
\node[NodeGraph]at(0,2,0){};
\node[NodeGraph]at(0,2,-1){};
\node[NodeGraph]at(-1,2,0){};
\node[NodeGraph]at(-1,2,-1){};
\node[NodeGraph](021)at(0,2,1){};
\node[NodeGraph]at(-1,2,1){};
\node[NodeGraph]at(1,0,0){};
\node[NodeGraph]at(1,0,-1){};
\node[NodeGraph]at(1,0,-2){};
\node[NodeGraph]at(1,0,-3){};
\node[NodeGraph]at(1,-1,0){};
\node[NodeGraph]at(1,-1,-2){};
\node[NodeGraph]at(1,-1,-3){};
\node[NodeGraph]at(1,-2,0){};
\node[NodeGraph]at(1,-2,-3){};
\node[NodeGraph]at(1,0,1){};
\node[NodeGraph]at(1,-1,1){};
\node[NodeGraph]at(1,-2,1){};
\node[NodeGraph]at(2,0,0){};
\node[NodeGraph]at(2,0,-1){};
\node[NodeGraph]at(2,0,-2){};
\node[NodeGraph]at(2,0,-3){};
\node[NodeGraph]at(2,-1,0){};
\node[NodeGraph]at(2,-1,-2){};
\node[NodeGraph]at(2,-1,-3){};
\node[NodeGraph]at(2,1,0){};
\node[NodeGraph]at(2,1,-1){};
\node[NodeGraph]at(2,1,-2){};
\node[NodeGraph]at(2,1,-3){};
\node[NodeGraph]at(3,0,0){};
\node[NodeGraph]at(3,0,-1){};
\node[NodeGraph]at(3,0,-2){};
\node[NodeGraph]at(3,-1,0){};
\node[NodeGraph]at(3,-1,-2){};
\node[NodeGraph](301)at(3,0,1){};
\node[NodeGraph]at(3,-1,1){};
\node[NodeGraph]at(3,1,0){};
\node[NodeGraph]at(3,1,-1){};
\node[NodeGraph]at(3,1,-2){};
\node[NodeGraph](320)at(3,2,0){};
\node[NodeGraph]at(3,2,-1){};
\node[NodeGraph](321)at(3,2,1){};]
\node[BigNodeLabeldGraph,left of=000]{$(0,0,0)$};
\node[BigNodeLabeldGraph,below of=321]{$(3,2,1)$};
\node[BigNodeLabeldGraph,right of=021]{$(0,2,1)$};
\node[BigNodeLabeldGraph,right of=301]{$(3,0,1)$};
\node[BigNodeLabeldGraph,left of=320]{$(3,2,0)$};
\node[BigNodeLabeldGraph,above of=-1-2-3]{$(\bar{1},\bar{2},\bar{3})$};
\node[BigNodeLabeldGraph,left of=-10-3]{$(\bar{1},0,\bar{3})$};
\node[BigNodeLabeldGraph,right of=-1-20]{$(\bar{1},\bar{2},0)$};
\end{tikzpicture}}
\caption{$\CubicReal(\ccc(4))$.}
\label{DessinCC4}
\end{figure}
\smallbreak


We know that each cell-wing contains at least $2^{n-1}$ cubic coordinates on the edges. Now, let us show that it is possible to associate bijectively each cell-wing to a synchronized cubic coordinate. 
\smallbreak

Let $n \geq 1$ and $\langle c^{\Outp},c^{\Inp} \rangle$ be a cell-wing of dimension $n-1$ and $\gamma$ be the map defined by
\begin{equation}
\gamma (c^{\Outp}_{i}, c^{\Inp}_{i}) := 
\begin{cases}
    c^{\Outp}_{i} & \mbox{ if } c^{\Outp}_{i}<0,\\
    c^{\Inp}_{i} & \mbox{ if } c^{\Outp}_{i}\geq 0,
\end{cases}
\end{equation}
for all $i\in [n-1]$. Note that the components returned by the map $\gamma$ are never zero.
Let denote by $(u^{\Outp}_i,v^{\Outp}_{i+1})$ (resp.\ $(u^{\Inp}_i,v^{\Inp}_{i+1})$) the pair of letters corresponding to $c^{\Outp}_i$ (resp.\ $c^{\Inp}_i$) by the map $\phi$, for any $i\in[n-1]$. Thus, the map $\gamma$ becomes
\begin{equation}
\gamma (c^{\Outp}_{i}, c^{\Inp}_{i}) := 
\begin{cases}
    -v^{\Outp}_{i+1} & \mbox{ if } c^{\Outp}_{i}<0,\\
    u^{\Inp}_{i} & \mbox{ if } c^{\Outp}_{i}\geq 0.
\end{cases}
\end{equation}
\smallbreak

Let $\Gamma$ be the map defined by
\begin{align}
\Gamma \langle c^{\Outp},c^{\Inp} \rangle := (\gamma(c^{\Outp}_{1}, c^{\Inp}_{1}), \gamma(c^{\Outp}_{2}, c^{\Inp}_{2}),\dots,\gamma(c^{\Outp}_{n-1}, c^{\Inp}_{n-1})).
\end{align}
\smallbreak

For instance, the cell-wing $\langle (0,-1,1,-1,-5,0,1,-1,-3),(1,0,2,0,-4,3,2,0,-2) \rangle$ is sent by $\Gamma$ to $(1,-1,2,-1,-5,3,2,-1,-3)$.
\smallbreak

\begin{Theorem}\label{bigGam}
For any $n \geq 1$, the map $\Gamma$ is a bijection from the set of cell-wings of dimension $n-1$ to $\ccs$.
\end{Theorem}
\smallbreak

\begin{proof}
The components of $\Gamma \langle c^{\Outp},c^{\Inp} \rangle$ belong to either $c^{\Outp}$ or $c^{\Inp}$. In both cases, it is a non-zero component. According to Theorem~\ref{nombredesommetdanscellule}, $\Gamma \langle c^{\Outp},c^{\Inp} \rangle$ is therefore a cubic coordinate of size $n$. Moreover, this cubic coordinate is synchronized because none of its components is null.
\smallbreak

Let $\langle c^{\Outp},c^{\Inp} \rangle$ and $\langle e^{\Outp},e^{\Inp} \rangle$ be two cell-wings of dimension $n-1$ such that $\Gamma \langle c^{\Outp},c^{\Inp} \rangle = \Gamma \langle e^{\Outp},e^{\Inp} \rangle$.
Let us denote $(u^{\Outp}_i,v^{\Outp}_{i+1})$ (resp.\ $(u^{\Inp}_i,v^{\Inp}_{i+1})$) the pair of letters corresponding to $c^{\Outp}_i$ (resp.\ $c^{\Inp}_i$) and $(x^{\Outp}_i,y^{\Outp}_{i+1})$ (resp.\ $(x^{\Inp}_i,y^{\Inp}_{i+1})$) the pair of letters corresponding to $e^{\Outp}_i$ (resp.\ $e^{\Inp}_i$) by the map $\phi$, for all $i\in[n-1]$.
\smallbreak

To suppose that $\Gamma \langle c^{\Outp},c^{\Inp} \rangle = \Gamma \langle e^{\Outp},e^{\Inp} \rangle$ is equivalent to suppose that for all $i\in[n-1]$, $\gamma (c^{\Outp}_i, c^{\Inp}_i) = \gamma (e^{\Outp}_i, e^{\Inp}_i)$. 
The map $\Gamma$ is injective if, for every $i\in[n-1]$, $c^{\Outp}_i = e^{\Outp}_i$ and $c^{\Inp}_i = e^{\Inp}_i$.
Suppose that there is some index $i$ such that $c^{\Outp}_i \neq e^{\Outp}_i$ or $c^{\Inp}_i \neq e^{\Inp}_i$, and we take the smallest such index.
Then, two cases have to be considered: either $\gamma (c^{\Outp}_i, c^{\Inp}_i) = u^{\Inp}_{i}$ or $\gamma (c^{\Outp}_i, c^{\Inp}_i) = -v^{\Outp}_{i+1}$. 
\smallbreak

\begin{enumerate}[label={(\arabic*)}]
\item \label{injGam1} Suppose that $\gamma (c^{\Outp}_i, c^{\Inp}_i) = u^{\Inp}_{i}$.

\begin{itemize}
\item In this case, $\gamma (e^{\Outp}_i, e^{\Inp}_i) = x^{\Inp}_{i}$ and $u^{\Inp}_{i} = x^{\Inp}_{i}$. Moreover, since $u^{\Inp}_i \ne 0$ (resp.\ $x^{\Inp}_i \ne 0$), then necessarily $v^{\Inp}_{i+1} = 0$ (resp.\ $y^{\Inp}_{i+1} = 0$). Therefore, $c^{\Inp}_i = e^{\Inp}_i$.

\item On the other hand , the fact that $u^{\Inp}_i > 0$ (resp.\ $x^{\Inp}_i > 0$) implies by Lemma~\ref{lemmeutilepartout} that $0\leq u^{\Outp}_i < u^{\Inp}_i$ and $v^{\Outp}_{i+1} = 0$ (resp.\ $0\leq x^{\Outp}_i < x^{\Inp}_i$ and $y^{\Outp}_{i+1} = 0$). Thus, one has $v^{\Outp}_{i+1} = y^{\Outp}_{i+1}$. 
Therefore, the only way for the hypothesis to be true is that $u^{\Outp}_i \neq x^{\Outp}_i$. 

Without loss of generality, suppose that $u^{\Outp}_i < x^{\Outp}_i$. 
By the definition of the covering map, one has $x^{\Outp}_i < x^{\Inp}_i$. This implies, in addition to the hypothesis that $x^{\Inp}_i = u^{\Inp}_i$, that $u^{\Outp}_i < x^{\Outp}_i < u^{\Inp}_i$.

Let $c := \Uparrow_{i+1} (c^{\Outp})$ and $e := \Uparrow_{i+1} (e^{\Outp})$, both cubic coordinates by Lemma~\ref{existeMax}. 
By construction, $c_j = c^{\Outp}_j$ (resp. $e_j = e^{\Outp}_j$)  for all $j \in [i]$ and $c_k = c^{\Inp}_k$ (resp. $e_k = e^{\Inp}_k$) for all $k \in [i+1, n-1]$. 

By minimality of $i$, we have that $c_j = e_j$ for all $j \in [i]$. Moreover, by the hypothesis that $\Gamma \langle c^{\Outp},c^{\Inp} \rangle = \Gamma \langle e^{\Outp},e^{\Inp} \rangle$, we have that $u^{\Inp}_k = x^{\Inp}_k$ for $k \in [i+1, n-1]$. 
Indeed, if $u^{\Inp}_k > 0$ (resp. $x^{\Inp}_k > 0$) then necessarily $\gamma (c^{\Outp}_k, c^{\Inp}_k) = u^{\Inp}_{k}$ (resp. $\gamma (e^{\Outp}_k, e^{\Inp}_k) = x^{\Inp}_{k}$) and so $u^{\Inp}_k = x^{\Inp}_k$. Otherwise, $u^{\Inp}_k = x^{\Inp}_k = 0$.
Note that because we know nothing about $v^{\Inp}_k$ and $y^{\Inp}_k$ for $k \in [i+2, n]$, we cannot say that $\uparrow_i (c)$ and $\uparrow_i (e)$ are equal. 

Now, let $c'$ be a tuple such that $c'_i = x^{\Outp}_i$ and $c'_j = c_j$ for all $j \ne i$ and let $(u',v')$ the pair of words corresponding to $c'$ by the map $\phi$. Let us show that $c'$ is a cubic coordinate.

By construction, since the word $v'$ is the dual Tamari diagram of $c$, $v'$ is a dual Tamari diagram. Likewise, since the word $u'$ is the Tamari diagram of $\uparrow_i (e)$, $u'$ is a Tamari diagram. 

Moreover, we know that between $c$, $c'$ and $\uparrow_i(c)$, only one positive letter changes, with $c_i = u^{\Outp}_i$, $c'_i = x^{\Outp}_i$ and  $\uparrow c_i = u^{\Inp}_i$, and we have established that $u^{\Outp}_i < x^{\Outp}_i < u^{\Inp}_i$. Since the letter $u^{\Inp}_i$ satisfies the compatibility condition with the letters of $v^{\Inp}$ in $\uparrow_i (c)$, then all letter lower in position $i$ satisfies this condition as well. Therefore, $u'$ and $v'$ are compatible and $c'$ is a cubic coordinate distinct from $c$ and $\uparrow_i (c)$ such that $c \Leq c' \Leq \uparrow_i (c)$.

However, if $c'$ is a cubic coordinate, then by the definition of the covering map $\uparrow c_i := u^{\Inp}_i = x^{\Outp}_i$, and so $\uparrow_i (c) := \Uparrow_{i} (c^{\Outp}) = c'$. This is not possible with the assumption that $u^{\Inp}_i = x^{\Inp}_i$, and so that $\gamma (c^{\Outp}_i, c^{\Inp}_i) = \gamma (e^{\Outp}_i, e^{\Inp}_i)$.
\end{itemize}

\item \label{injGam2} Suppose that $\gamma (c^{\Outp}_i, c^{\Inp}_i) = -v^{\Outp}_{i+1}$. In this case $\gamma (e^{\Outp}_i, e^{\Inp}_i) = -y^{\Outp}_{i+1}$ and $v^{\Outp}_{i+1} = y^{\Outp}_{i+1}$.
By rephrasing the arguments of the case~\ref{injGam1} for the dual, we show that $c^{\Outp}_i = e^{\Outp}_i$ and $c^{\Inp}_i = e^{\Inp}_i$.
\end{enumerate}

This shows that the map $\Gamma$ is injective.
\smallbreak

Now let us show that the cardinal of the set of cell-wings of dimension $n-1$ is equal to the cardinal of $\ccs$.
Recall that the set of cells of size $n$ is exactly $\OutputWings(\cc)$. Furthermore, by the poset isomorphism $\psi$ we know that these elements are the Tamari intervals having $n-1$ elements covering in the Tamari interval lattices. 
In~\cite{Cha18} Chapoton shows that the set of these Tamari intervals has the same cardinal as the set of synchronized Tamari intervals (see Theorem~2.1 and Theorem~2.3 from~\cite{Cha18}). Finally, Proposition~\ref{ditsyn=tisyn} allows us to conclude that the cardinal of $\ccs$ and the cardinal of the set of cell-wings of dimension $n-1$ are equal.
Thus, the map $\Gamma$ is bijective.
\end{proof}
\smallbreak

Let us also defined the map $\bar{\gamma}$ by
\begin{equation}
\bar{\gamma} (c^{\Outp}_{i}, c^{\Inp}_{i}) := 
\begin{cases}
    c^{\Inp}_{i} & \mbox{ if } c^{\Outp}_{i}<0,\\
    c^{\Outp}_{i} & \mbox{ if } c^{\Outp}_{i}\geq 0,
\end{cases}
\end{equation}
for all $i\in[n-1]$. Then $\bar{\Gamma}$ is defined by
\begin{align}
\bar{\Gamma} \langle c^{\Outp},c^{\Inp} \rangle := (\bar{\gamma}(c^{\Outp}_{1}, c^{\Inp}_{1}), \bar{\gamma}(c^{\Outp}_{2}, c^{\Inp}_{2}),\dots,\bar{\gamma}(c^{\Outp}_{n-1}, c^{\Inp}_{n-1})).
\end{align}
\smallbreak

By Theorem~\ref{nombredesommetdanscellule}, $\bar{\Gamma} \langle c^{\Outp},c^{\Inp} \rangle$ is a cubic coordinate belonging to $\langle c^{\Outp},c^{\Inp} \rangle$, called~\Def{opposite cubic coordinate}. For the synchronized cubic coordinate $c$ associated with $\langle c^{\Outp},c^{\Inp} \rangle$ by $\Gamma$, denote $c^{op}$ the opposite cubic coordinate. All the components of $c^{op}$ are different from those of $c$, and these differences are the greatest possible. For any synchronized cubic coordinate $c$, such a cubic coordinate $c^{op}$ always exists and is unique.
\smallbreak

Note that the map $\Gamma$ only returns the positive components of $c^{\Inp}$ and the negative components of $c^{\Outp}$. Conversely, the map $\bar{\Gamma}$ returns the positive components of $c^{\Outp}$ and the negative components of $c^{\Inp}$. We already know that the latter combination is always possible for any comparable cubic coordinates according to Lemma~\ref{lem:baspetitethautgrand}. On the other hand, this is not the case for the first mentioned combination.
\smallbreak

\subsection{Volume of $\CubicReal(\ccc)$}

Now let us take a closer look at the geometry of the cubic realization. We already know that there are at least $2^{n-1}$ cubic coordinates forming an outline of each cell-wing. The following notions will allow us to say more.
\smallbreak

A point $x$ of $\R^{n-1}$ is \Def{inside} a cell $\Angle{c, c'}$ if, for any $i \in [n-1]$, $c_i \ne c'_i$ implies $c_i < x_i < c'_i$.  A cell $\Angle{c, c'}$ is \Def{pure} if there is no cubic coordinate inside $\Angle{c, c'}$. The \Def{volume} $\Volume \Angle{c, c'}$ of $\Angle{c, c'}$ is its volume as an orthotope and it satisfies
\begin{equation}
    \Volume \Angle{c, c'} = \prod_{i \in \DiffIndices(c, c')} (c'_i - c_i).
\end{equation}
\smallbreak

\begin{Lemma}\label{cellulevide}
Let $n \geq 1$ and $\langle c^{\Outp},c^{\Inp} \rangle$ be a cell-wing of dimension $n-1$. The cell $\langle c^{\Outp},c^{\Inp} \rangle$ is pure. 
\end{Lemma}
\smallbreak

\begin{proof} 
Suppose there is a cubic coordinate $c$ such that $c^{\Outp}_{i} < c_{i} < c^{\Inp}_{i}$ for all $i\in[n-1]$. By Lemma~\ref{lemmequifaitgagnerdutps} we know that if $c^{\Outp}_{i}<0$, then $c^{\Inp}_{i}\leq 0$ and if $c^{\Outp}_{i}\geq 0$, then $c^{\Inp}_{i}>0$. However, since $c^{\Outp}_{i} < c_i < c^{\Inp}_{i}$, then $c_{i}$ is different from $0$. In the end, if such a cubic coordinate $c$ exists, it would be synchronized.
But then, there would be a cubic coordinate both synchronized and internal by hypothesis. This is impossible according to Proposition~\ref{synext}.
\end{proof}
\smallbreak

We showed with Theorem~\ref{nombredesommetdanscellule} that each cell-wing contains at least $2^{n-1}$ cubic coordinates. By Lemma~\ref{cellulevide}, we know that each cell-wing $\langle c^{\Outp},c^{\Inp} \rangle$ is pure, and then has only cubic coordinates on its border.
\smallbreak 

Let $n \geq 1$ and $\langle c^{\Outp},c^{\Inp} \rangle$ be a cell-wing of dimension $n-1$. Since between $c^{\Outp}$ and $c^{\Inp}$ all components are different, one has $\DiffIndices(c^{\Outp},c^{\Inp}) = n-1$, and so the volume of $\langle c^{\Outp},c^{\Inp} \rangle$ satisfies
\begin{equation}
\Volume \langle c^{\Outp},c^{\Inp} \rangle = \prod_{i=1}^{n-1} (c^{\Inp}_i - c^{\Outp}_i).
\end{equation}
\smallbreak

Let us denote by $c^0$ the cubic coordinate such that $c^0_i = 0$ for any $i\in[n-1]$. To compute $\Volume \langle c^{\Outp},c^{\Inp} \rangle$ from the synchronized cubic coordinate $c$ associated by $\Gamma$, we must first compute the volume of the cell formed by $c^0$ and $c$. 
\smallbreak

By Lemma~\ref{lemmequifaitgagnerdutps}, any cell-wing is included in an $\epsilon$-region of the $c^0$ cubic coordinate. This means that no cell-wing can be cut by a line passing by
the origin $c^0$ and a cubic coordinate of the form $\Par{0, \dots, 0, 1, 0, \dots, 0}$ or $\Par{0, \dots, 0, -1, 0, \dots, 0}$.
\smallbreak

According to Lemma~\ref{lemmeutilepartout}, for any cubic coordinate, replacing any component by $0$ gives a cubic coordinate. In other words, for any cubic coordinate $c$, there are $n-1$ cubic coordinates related to $c$ which are its projections on the lines passing by $c^0$ and a cubic coordinate of the form $\Par{0, \dots, 0, 1, 0, \dots, 0}$ or $\Par{0, \dots, 0, -1, 0, \dots, 0}$. 
Therefore, even if $c^{0}$ and $c$ are not comparable, we consider the cell, denoted by $\Angle{c}$, between $c^{0}$ and $c$, such that the volume of this cell satisfies
\begin{equation}
    \Volume \Angle{c} = \prod_{i \in \DiffIndices(c, c^0)} |c_i|.
\end{equation}
Note that the dimension of a cell is less than or equal to $n-1$. Moreover, $\Angle{c}$ can be no-pure, and may even contain other cells of the same dimension.
\smallbreak

By the map $\Gamma$, the components of the synchronized cubic coordinate $c$ of the cell-wing $\langle c^{\Outp},c^{\Inp} \rangle$ are the greatest in absolute value between $c^{\Outp}$ and $c^{\Inp}$. Therefore, in the cell-wing $\langle c^{\Outp},c^{\Inp} \rangle$, $c$ is the furthest cubic coordinate from $c^0$. In particular, $\Angle{c}$ contains the cell-wing $\langle c^{\Outp},c^{\Inp} \rangle$ and the dimension of $\Angle{c}$ is~$n-1$.
\smallbreak

Let $n\geq 0$ and $c\in\ccs$. 
Since by the definition, all components of $c$ are different from $0$, one has $\DiffIndices(c, c^0) = n-1$. Therefore,
\begin{equation}
    \Volume \Angle{c} = \prod^{n-1}_{i=1} |c_{i}|.
\end{equation}
\smallbreak

Let us endow the set $\ccs$ with the partial order $\Leq_{\mathrm{s}}$ such that for $c,c'\in\ccs$ one has $c' \Leq_{\mathrm{s}} c$ if $c'_i$ and $c_i$ have the same sign and $|c'_i| \leq |c_i|$ for any $i\in[n-1]$.
\smallbreak

\begin{Lemma}\label{lemmeunioncell}
For any $n \geq 1$, let $\langle c^{\Outp},c^{\Inp} \rangle$ be a cell-wing of dimension $n-1$, and $c := \Gamma \langle c^{\Outp},c^{\Inp} \rangle$. For any $x\in \mathbb{R}^{n-1}$ such that $x\in \Angle{c}$, if $x\notin \langle c^{\Outp},c^{\Inp} \rangle$, then there is $c'\in \ccs$ different from $c$ such that $c' \Leq_{\mathrm{s}} c$ and $x \in \Angle{c'}$.
\end{Lemma}
\smallbreak

\begin{proof}
Let $c^{op}$ be the opposite cubic coordinate of $c$. Since $x\notin \langle c^{\Outp},c^{\Inp} \rangle$ and $x\in \Angle{c}$, then necessarily $c^{op} \ne c^0$. 
For the same reasons, there is an index $i$ such that $|x_i| < |c^{op}_i|$ where $c^{op}_i \ne 0$.
Let us build from such index $i$ the $(n-1)$-tuple $\nabla c$ such that $\nabla c_i = c^{op}_i$ and $\nabla c_j = c_j$ for all $j\ne i$. According to Theorem~\ref{nombredesommetdanscellule}, $\nabla c$ is a cubic coordinate and belongs to the cell-wing $\langle c^{\Outp},c^{\Inp} \rangle$. Also, $\nabla c$ is a synchronized cubic coordinate which satisfies $\nabla c \Leq_{\mathrm{s}} c$ and which is different from $c$. We can then associate to $\nabla c$ a cell, which is strictly included in $\Angle{c}$. Then $x \in \Angle{\nabla c}$. 
\end{proof}
\smallbreak

Since by Lemma~\ref{cellulevide} all cell-wings are pure, Lemma~\ref{lemmeunioncell} implies that $\Angle{c} \subseteq \coprod_{c'\Leq_{\mathrm{s}} c}\Gamma^{-1}(c')$, and since the reciprocal inclusion is obvious, one has the following result.
\smallbreak

\begin{Lemma}\label{prop:unioncell}
Let $n \geq 0$ and $c\in\ccs$. Then
\begin{equation}
\Angle{c} = \coprod_{c'\Leq_{\mathrm{s}} c}\Gamma^{-1}(c').
\end{equation}
\end{Lemma}
\smallbreak

Let $n \geq 0$ and $c\in\ccs$. The \Def{synchronized volume} of $c$ is defined by
\begin{equation}\label{equ:volume sych}
\SynVolume(c) := \Volume \Angle{c} - \sum_{\substack{c'\Leq_{\mathrm{s}} c \\ c'\ne c}} \SynVolume(c').
\end{equation}
Note that~\eqref{equ:volume sych} is a Möbius inversion~\cite{Sta12}.
\smallbreak

\begin{Proposition}\label{VolumeTotal}
Let $n \geq 1$ and $\langle c^{\Outp},c^{\Inp} \rangle$ be a cell-wing of dimension $n-1$. By setting $c := \Gamma\langle c^{\Outp},c^{\Inp} \rangle$, we have
\begin{equation}\label{equ:volume}
\Volume \langle c^{\Outp},c^{\Inp} \rangle = \SynVolume(c).
\end{equation}
\end{Proposition}
\smallbreak

\begin{proof}
This is a consequence of Lemma~\ref{prop:unioncell} and of~\eqref{equ:volume sych}.
\end{proof}
\smallbreak

With Proposition~\ref{VolumeTotal} we are able to compute, for any $n \geq 0$, the volume of $\CubicReal(\cc)$ depending on synchronized cubic coordinates,
\begin{equation}
\Volume (\CubicReal(\cc)) = \sum_{c \in \ccs} \SynVolume(c).
\end{equation}


\subsection{EL-shellability}

In~\cite{BW96} and~\cite{BW97}, Björner and Wachs
generalized the method of labellings of
the cover relations of graded posets to the case of non-graded posets. In particular, they
showed the EL-shellability of the Tamari poset~\cite{BW97}.
\smallbreak

Let $\PosetP$ be a bounded poset and $\Lambda$ be a poset, and $\lambda : \Covered_{\PosetP} \to \Lambda$ be a map. For any saturated chain $\Par{x^{(1)}, \dots, x^{(k)}}$ of
$\PosetP$, we set
\begin{equation}
    \lambda\Par{x^{(1)}, \dots, x^{(k)}}
    :=
    \Par{\lambda\Par{x^{(1)}, x^{(2)}}, \dots,
    \lambda\Par{x^{(k - 1)}, x^{(k)}}}.
\end{equation}
We say that a saturated chain of $\PosetP$ is \Def{$\lambda$-increasing}
(resp.\ \Def{$\lambda$-decreasing}) if its image by $\lambda$ is an
increasing (resp.\ decreasing) word for the order relation
$\Leq_{\Lambda}$. We say also that a saturated chain $\Par{x^{(1)},
\dots, x^{(k)}}$ of $\PosetP$ is \Def{$\lambda$-smaller} than a
saturated chain $\Par{y^{(1)}, \dots, y^{(k)}}$ of $\PosetP$ if $\lambda\Par{x^{(1)}, \dots, x^{(k)}}$ is smaller than $\lambda\Par{y^{(1)}, \dots, y^{(k)}}$ for the
lexicographic order induced by $\Leq_{\Lambda}$. The map $\lambda$ is called \Def{EL-labeling} (edge lexicographic labeling) of $\PosetP$ if for any $x, y \in \PosetP$ satisfying $x \Leq_\PosetP y$, there is exactly one $\lambda$-increasing saturated chain from $x$ to~$y$, and this chain is $\lambda$-minimal among all saturated chains from $x$ to $y$. Any bounded poset that admits an EL-labeling is \Def{EL-shellable}~\cite{BW96,BW97}.
\smallbreak

The EL-shellability of a poset $\PosetP$ implies several topological and
order theoretical properties of the associated order complex
$\Delta(\PosetP)$ built from $\PosetP$. Recall that the faces of this
simplicial complex are all the chains of $\PosetP$. Moreover, if $\PosetP$ has at most one
$\lambda$-decreasing chain between any pair of elements, then the Möbius function of $\PosetP$ takes values in $\{-1, 0, 1\}$. In this case, the simplicial complex associated with each open interval of $\PosetP$ is either contractible or has the homotopy type of
a sphere~\cite{BW97}.
\smallbreak

For the sequel, we set $\Lambda$ as the poset $\Z^3$ wherein elements are ordered lexicographically.
Let $(c,c') \in \lessdot$ such that, for $i\in [n-1]$, $c_i < c'_i$, and let $\lambda : \lessdot \rightarrow \mathbb{Z}^3$ be the map defined by
\begin{equation}\label{equ: lambda}
 \lambda(c,c') := (\varepsilon, i , c_i),
\end{equation}
where $\varepsilon :=
\begin{cases}
-1 &\mbox{if } c_i <0,\\
1 &\mbox{else.}
\end{cases}$
\smallbreak

Note that by Proposition~\ref{prop:cubicoordinatecovering}, the index $i$ such that $c_i < c'_i$ is unique.
\smallbreak

\begin{Theorem}\label{ccshell}
For any $n \geq 0$, the map $\lambda$ is an EL-labeling of $\cc$. Moreover, there is at most one $\lambda$-decreasing chain between any pair of elements of~$\cc$.
\end{Theorem}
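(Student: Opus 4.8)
The goal is to show $\lambda$ is an EL-labeling of $\cc$ and that between any two comparable elements there is at most one $\lambda$-weakly decreasing saturated chain. Since $\cc$ is finite (hence bounded once we add a $\bar 0$ and $\bar 1$, or we note it already has a minimum and maximum as a lattice via Theorem~\ref{morphPoset}), the bulk of the work is in analyzing saturated chains. The key structural fact, coming from the covering description (the Proposition following Theorem~\ref{morphPoset}), is that a cover $c \lessdot c'$ changes exactly one coordinate, say index $i$, with $c_i < c'_i$. Moreover by Lemma~\ref{lemmequifaitgagnerdutps} (applied through the minimal increasing map) the sign of the pair behaves rigidly: if $c_i < 0$ then $c'_i \leq 0$, and if $c_i \geq 0$ then $c'_i > 0$. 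So a cover is of \emph{negative type} ($\varepsilon = -1$: it strictly increases a negative entry toward $0$, possibly reaching $0$) or of \emph{positive type} ($\varepsilon = 1$: it strictly increases a nonnegative entry, which becomes strictly positive). This dichotomy is exactly what the first coordinate $\varepsilon$ of $\lambda$ records, and it is the engine of the proof: along any saturated chain from $c$ to $c'$ the set of indices touched by negative-type covers and the set touched by positive-type covers are determined — they are $\DiffIndexes^{-}(c,c')$ and $\DiffIndexes^{+}(c,c')$ respectively (using Lemma~\ref{lem:baspetitethautgrand} and the sign rigidity to see these sets are disjoint and their union is the set of changed indices).

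\textbf{Constructing the unique increasing chain.} Given $c \Leq c'$, I would build the canonical $\lambda$-increasing chain explicitly: first perform all negative-type moves, in increasing order of index, then all positive-type moves, in increasing order of index; within a single index, move that coordinate in one jump to its final value. That this is a legitimate saturated chain of covers needs checking, but it is precisely what Lemma~\ref{lem:composanteparcomposante} provides — together with Lemmas~\ref{lem:baspetitethautdifferent} and~\ref{lem:mêmehautetbasdifferent}, which guarantee that when we fix a single coordinate and vary it the intermediate tuples are still cubic coordinates, so each single-coordinate change decomposes into covers, and by construction those sub-covers all share the same $\varepsilon$ and $i$ with strictly increasing third label $c_i$. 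Reading the labels of this chain: all $\varepsilon = -1$ labels come first (with indices increasing, and within each index the $c_i$ component strictly increasing), then all $\varepsilon = +1$ labels (likewise). Since $\Lambda = \Z^3$ is ordered lexicographically with $-1 < 1$, this word is strictly increasing. So an increasing saturated chain exists.

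\textbf{Uniqueness, minimality, and the weakly decreasing count.} For uniqueness of the increasing chain: in any saturated chain, the first label's $\varepsilon$ is $-1$ iff the first move is negative-type. If $\DiffIndexes^{-}(c,c') \neq \emptyset$, an increasing chain must do all negative-type moves before any positive-type move (once a positive label $\varepsilon = 1$ appears, no later label can have $\varepsilon = -1$), and among negative-type moves the indices must be weakly increasing, hence — since each index can be "closed out" only when its coordinate reaches its final value and a coordinate at its final value admits no further increasing move at that index without the third component jumping backwards — the indices must be visited in strictly increasing order with each handled completely before moving on; the same within the positive block. This pins down the chain. Minimality for the lexicographic order on chains is then automatic: any other saturated chain either starts with a larger first label or agrees for a while and then is forced to take a lexicographically larger step. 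The final claim — at most one $\lambda$-weakly decreasing chain — is dual: a weakly decreasing chain must put all positive-type moves first (indices weakly decreasing, third components weakly decreasing) and then all negative-type moves; an analogous rigidity argument forces it to be unique when it exists. \textbf{The main obstacle} I anticipate is the combinatorial verification that the coordinate-by-coordinate jump chains have the claimed label words \emph{and} that no "out of order" saturated chain can be $\lambda$-increasing or $\lambda$-weakly decreasing — i.e., proving that once a coordinate is set to a value strictly between its start and final value, the only way to continue increasing \emph{that} coordinate is with the \emph{same} $(\varepsilon, i)$ and a larger third label, while switching to a smaller index $i'$ or to the other sign produces a lexicographically smaller triple. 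This rests on carefully combining the sign rigidity of Lemma~\ref{lemmequifaitgagnerdutps} with the interpolation Lemmas~\ref{lem:baspetitethautdifferent}–\ref{lem:composanteparcomposante}, and is where the argument must be done with care rather than waved through.
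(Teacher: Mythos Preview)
Your approach is essentially identical to the paper's: build the canonical $\lambda$-increasing chain by first performing all negative-type moves in increasing index order and then all positive-type moves in increasing index order (exactly the chain of Lemma~\ref{lem:composanteparcomposante}, refined to a saturated chain), argue uniqueness and lexicographic minimality from the rigid $(\varepsilon,i)$ block structure, and handle the weakly decreasing statement by the dual ordering. The only slip is a label typo (you write \texttt{lem:baspetitethautdifferent} where the paper has \texttt{lem:m\^emebasethautdifferent}); the content is right.
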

\smallbreak

\begin{proof}
Let $c,c'\in\cc$ such that $c\Leq c'$. 
By Lemma~\ref{lem:baspetitethautgrand}, there is a cubic coordinate $c''$ such that $u'' = u$ and $v'' = v'$ with $(u'', v'') := \phi(c'')$. 
Let 
\begin{equation}
\DiffIndexes^{-}\Par{c, c''} = \{d_1, d_2, \dots, d_r\}
\end{equation} 
with $d_{k-1} < d_{k}$ for all $k \in [2,r]$, and 
\begin{equation}
\DiffIndexes^{+}\Par{c'', c'} = \{d'_{1}, d'_{2}, \dots, d'_s\},
\end{equation}
with $d'_{k-1} < d'_{k}$ for all $k \in [2,s]$. 
\smallbreak

By Lemma~\ref{lem:mêmebasethautdifferent}, there is a chain between $c$ and $c''$
\begin{equation}\label{equ:chain}
\Par{c, c^{(1)}, \dots, c^{(r-1)}, c^{(r)}= c''},
\end{equation}
where, for $k \in [r]$, $c^{(k)}$ be a cubic coordinate obtained by replacing in $c$ all the components $c_{d_i}$ by the components $c''_{d_i}$ for $i \in [k]$.
\smallbreak

By Lemma~\ref{lem:mêmehautetbasdifferent}, there is a chain between $c''$ and $c'$
\begin{equation}\label{equ:chain2}
\Par{c'', c'^{(1)}, \dots, c'^{(s-1)}, c'^{(s)}= c'},
\end{equation}
where, for $k \in [s]$, $c'^{(k)}$ be a cubic coordinate obtained by replacing in $c''$ all the components $c''_{d_i}$ by the components $c'_{d_i}$ for $i \in [k]$. 
\smallbreak

Let us consider the chain obtained by concatenating the two chains~\eqref{equ:chain} and \eqref{equ:chain2}.
Since in this chain only one component differs between two consecutive cubic coordinates, a saturated chain $\mu$ can be constructed by considering all the cubic coordinates between them.
For both chains~\eqref{equ:chain} and \eqref{equ:chain2}, the components are independently increasing one by one from the left to the right. By construction, it implies that $\mu$ is $\lambda$-increasing for the lexicographic order induced by~\eqref{equ: lambda}.
\smallbreak

Moreover, any other choice of saturated chain between $c$ and $c'$ implies choosing, at a certain step $k$, a greater label for the lexicographical order than the label $(\varepsilon, k, c_k)$ of $\mu$, and then having to choose the label $(\varepsilon, k, c_k)$ afterwards.
Thus, in addition to being $\lambda$-increasing, the saturated chain $\mu$ is unique and is $\lambda$-minimal among all saturated chains from $c$ to $c'$.
\smallbreak

If a saturated chain $\lambda$-decreasing exists between $c$ and $c'$, it is built by first changing the different and negative components between $c$ and $c''$ from right to left, and then changing the different and positive components between $c''$ and $c'$ from right to left.
For the same reason that any saturated $\lambda$-increasing chain is unique for any interval, if it exists, the $\lambda$-decreasing chain is also unique.
\end{proof}
\smallbreak

For instance, in Figure~\ref{RealCube}, the $\lambda$-increasing saturated chain between $(-1,-2)$ and $(2,1)$ is the chain 
\begin{equation}
\Par{(-1,-2), (0,-2), (0,-1), (0,0), (1,0), (2,0), (2,1)},
\end{equation}
and 
\begin{equation}
\lambda\Par{(-1,-2), \dots, (2,1)} = \Par{(-1, 1, -1), (-1, 2,-2), (-1, 2, -1), (1, 1, 0), (1, 1, 1), (1, 2, 0)}.
\end{equation}
\vspace{1cm}

\section*{Acknowledgements}

The author would like to thank the anonymous reviewer for all his good advices, which contributed to the improvement of this article. The author would also like to thank Frédéric Chapoton, Samuele Giraudo, and Baptiste Rognerud for the numerous discussions and their suggestions.
\bigbreak

My manuscript has no associated data.

\bibliographystyle{alpha}
\bibliography{Bibliography}

\end{document}